\newtheorem{theorem}{Theorem}[section]
\newtheorem{proposition}[theorem]{Proposition}
\newtheorem{lemma}[theorem]{Lemma}
\newtheorem{corollary}[theorem]{Corollary}
\theoremstyle{definition}
\newtheorem{definition}[theorem]{Definition}
\newtheorem{remark}[theorem]{Remark}
\title[Stein 4-manifolds and corks]{Stein 4-manifolds and corks}
\author[AKBULUT and YASUI]{Selman Akbulut and Kouichi Yasui}
\thanks{The first named author is partially supported by NSF grant DMS 0905917, and the second named author was partially supported by GCOE, Kyoto University, by KAKENHI~21840033 and by JSPS Research Fellowships for Young Scientists.}
\date{November 8, 2010.\: \textit{Revised}: September 6, 2012.}
\subjclass[2000]{Primary~57R55, Secondary~57R65}
\keywords{Stein 4-manifold; cork; plug; knot surgery; rational blowdown}
\address{Department~of~Mathematics, Michigan State University, E.Lansing, MI, 48824, USA}
\email{akbulut@math.msu.edu}
\address{Department~of~Mathematics, Graduate School~of~Science, Hiroshima~University, 1-3-1 Kagamiyama, Higashi-Hiroshima, Hiroshima, 739-8526, Japan}\email{kyasui@hiroshima-u.ac.jp}
\begin{document}

\begin{abstract}

It is known that every compact Stein $4$-manifolds can be embedded into a simply connected, minimal, closed, symplectic $4$-manifold. By using this property, we discuss a new method of constructing corks. This method generates a large class of new corks including all the previously known ones. We prove that every one of these corks can knot infinitely many different ways in a closed smooth manifold, by showing that cork twisting along them gives different exotic smooth structures. We also give an example of infinitely many disjoint embeddings of a fixed cork into a non-compact $4$-manifold which produce infinitely many exotic smooth structures. Furthermore, we construct arbitrary many simply connected compact codimension zero submanifolds of $S^4$ which are mutually homeomorphic but not diffeomorphic.

\end{abstract}

\maketitle

\vspace{-.1in}

\section{Introduction}It is known that every smooth structure on a simply connected closed smooth $4$-manifold is obtained from the given manifold by a cork twist (Matveyev~\cite{M}, Curtis-Freedman-Hsiang-Stong~\cite{C}, Akbulut-Matveyev~\cite{AM2}). It is thus important to investigate cork structures of $4$-manifolds. The first author~\cite{A1}, \cite{A5}, Bi\v zaca-Gompf~\cite{BG}, and the authors~\cite{AY1} have found cork structures of (surgered) elliptic surfaces. However, finding cork structures of given exotic pairs of smooth $4$-manifolds is usually a quite difficulut task. 

\vspace{.05in} 

In~\cite{AY3} (and also \cite{AY1}), rather than trying to locate corks in exotic manifold pairs we constructed exotic manifold pairs from given corks. Our strategy in \cite{AY3} was: $(1)$ Construct a ``suitable'' $4$-manifold with boundary which contains candidates of corks; $(2)$ Embed it into a closed $4$-manifold with the non-vanishing Seiberg-Witten invariant, ``appropriately''; $(3)$ Do surgeries and compute Seiberg-Witten invariants; $(4)$ Relate surgeries and cork twists. Nevertheless, the step $(2)$ is generally not easy. 

\vspace{.05in} 

Eliashberg~\cite{E1} proved that compact Stein $4$-manifolds can be recognized by handlebody pictures, that is, just by checking Thurston-Bennequin framings of its 2-handles (e.g. Gompf-Stipsicz~\cite{GS}, Ozbagci-Stipsicz~\cite{OS1}). Furthermore, compact Stein $4$-manifolds satisfy the following useful embedding theorem (though ``simply connected'' is not claimed in~\cite{LM1} and~\cite{AO3}, this is obvious from the proof in~\cite{AO3}):

\vspace{.05in} 

\begin{theorem}[Lisca-Mati\'{c}~\cite{LM1}, Akbulut-Ozbagci~\cite{AO3}]\label{th:closing of Stein}
Every compact Stein $4$-manifold with boundary can be embedded into a simply connected, minimal, closed, symplectic $4$-manifold with $b_2^+>1$. Here minimal means that there are no smoothly embedded $2$-sphere with the self-intersection number $-1$. 
\end{theorem}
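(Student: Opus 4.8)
The plan is to realize $W$ as a symplectic submanifold of a closed symplectic manifold by closing up a Lefschetz fibration carrying $W$, and then to arrange the capping data so that the resulting closed manifold acquires all four properties at once. First I would invoke the theorem of Loi--Piergallini (reproved by Akbulut--Ozbagci in \cite{AO3}) that every compact Stein $4$-manifold $W$ admits a positive allowable Lefschetz fibration $W \to D^2$ with bounded fibers: a compact surface $F$ with boundary together with a monodromy $\phi = t_{c_1}\cdots t_{c_k}$ that is a product of right-handed Dehn twists along curves $c_i \subset F$. The induced open book $(F,\phi)$ on $\partial W$ supports the contact structure $\xi$ coming from the field of complex tangencies, and $W$ is a strong convex filling of $(\partial W,\xi)$. (The alternative route of Lisca--Mati\'c \cite{LM1} instead embeds $W$ holomorphically into $\mathbb{C}^N$, takes an algebraic closure, and resolves to land in a projective surface; I prefer the fibration route because $\pi_1$ of the closed manifold is then read off transparently from the vanishing cycles.)

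The next step is to close the fibration up over $S^2$. I would cap each boundary circle of $F$ with a disk to obtain a closed surface $\widehat F$ and regard $\phi$ inside $\mathrm{MCG}(\widehat F)$, then append finitely many additional right-handed Dehn twists $t_{d_1},\dots,t_{d_m}$ so that $t_{c_1}\cdots t_{c_k}\,t_{d_1}\cdots t_{d_m} = 1$ in $\mathrm{MCG}(\widehat F)$. This is arranged, after sufficiently many positive stabilizations of the open book (which replace $W$ by a diffeomorphic manifold and leave $(\partial W,\xi)$ unchanged, while enlarging the genus of $\widehat F$), by embedding the given positive word as a prefix of a global positive factorization of the identity built from chain and lantern relations. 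The completed data define a genuine Lefschetz fibration $X \to S^2$ with closed fibers $\widehat F$ containing the original PALF $W$ as the preimage of a disk. By Gompf's theorem (see \cite{GS}) such a fibration, having a homologically essential fiber, carries a symplectic form for which the fibers are symplectic, so $X$ is a closed symplectic $4$-manifold containing $W$ symplectically.

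It remains to upgrade $X$ to be simply connected, minimal, and of $b_2^+>1$; this is the delicate part, since the naive closing-up neither controls $\pi_1$ nor excludes exceptional spheres (sections and blow-ups concealed in the relations). Simple connectivity I would secure by choosing the vanishing cycles so that the full collection $\{c_i\}\cup\{d_j\}$ normally generates $\pi_1(\widehat F)$, whence $\pi_1(X)=\pi_1(\widehat F)/\langle\!\langle c_i,d_j\rangle\!\rangle=1$; there is ample freedom to do this during stabilization and completion. To force $b_2^+>1$ together with minimality, I would not use $X$ itself but a symplectic fiber sum of $X$ with an auxiliary minimal symplectic fibration (for instance an elliptic surface $E(n)$ with $n\ge 2$) along a regular fiber. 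This simultaneously raises $b_2^+$ above $1$ and, by Usher's theorem on the minimality of symplectic sums along surfaces of positive genus, guarantees that no $(-1)$-sphere survives. Because the summing takes place away from the disk whose preimage is $W$, the embedding $W\hookrightarrow X$ persists through the fiber sum.

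The main obstacle is precisely this final coordination: arranging minimality and $b_2^+>1$ while simultaneously preserving the symplectic structure, the simple connectivity, and the embedded copy of $W$. The fibration-completion step (realizing the given positive word inside a positive factorization of the identity) and the verification that stabilization, completion, and fiber summing never disturb the region containing $W$ are where the real work lies; once these are in place the properties follow from Gompf's and Usher's theorems.
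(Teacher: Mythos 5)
First, a point of comparison: the paper does not prove this theorem at all --- it is imported from Lisca--Mati\'c~\cite{LM1} and Akbulut--Ozbagci~\cite{AO3}, with only the remark that simple connectivity is implicit in the proof of~\cite{AO3}. So your proposal must be measured against those proofs. Your plan is essentially a reconstruction of the~\cite{AO3} route: Stein $\Rightarrow$ PALF (Loi--Piergallini/Akbulut--Ozbagci), cap the fiber boundary, complete the positive factorization to the identity in the mapping class group of the closed fiber, and apply Gompf's theorem to get a closed symplectic $4$-manifold containing $W$. That part of your argument is sound: after capping, every element of the mapping class group of a closed surface of genus $\geq 1$ is a product of right-handed Dehn twists (any positive relation $A t_c B = 1$ gives $t_c^{-1} = BA$, and all nonseparating twists are conjugate), so the completion exists, and appending a chain-relation word whose curves fill the surface kills $\pi_1$ of the total space. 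Stabilization keeps $W$ unchanged, and the fiber class is essential, so Gompf applies.

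The genuine gap is in the last step, which is exactly where minimality and $b_2^+>1$ --- the properties this paper actually needs --- are supposed to come from. You propose to ``symplectically fiber sum $X$ with $E(n)$, $n\geq 2$, along a regular fiber.'' This is not defined: the regular fiber of $E(n)$ is a torus, while the regular fiber of your $X$ has genus $g\geq 2$ (indeed your own stabilizations force the genus up), and a symplectic sum requires the two summing surfaces to be diffeomorphic, of square zero on both sides. So the operation you invoke cannot be performed, and with it both the $b_2^+>1$ claim and the minimality claim collapse. Moreover, even with a correctly chosen auxiliary genus-$g$ piece, Usher's theorem does not give minimality for free: it requires that neither summand contain a $(-1)$-sphere disjoint from the summing surface, which you have not verified for $X$ (your completion may well introduce such spheres), and the auxiliary summand must be chosen so as not to reintroduce $\pi_1$ (e.g.\ $\Sigma_g\times S^2$ would). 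Possible repairs: arrange all added vanishing cycles to be nonseparating, so the fibration is relatively minimal, and invoke the known minimality of relatively minimal Lefschetz fibrations of fiber genus $\geq 2$ over $S^2$, with $b_2^+>1$ forced by summing two copies of $X$ along the genus-$g$ fiber; or abandon the fibration picture at this stage and follow~\cite{LM1}, where $W$ is holomorphically embedded in a projective surface that can be taken minimal, of general type, with $b_2^+>1$, which is precisely why the paper cites both references rather than~\cite{AO3} alone.
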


In this paper, by using this embedding theorem, we give simple constructions of the various cork structures found in \cite{AY3}. Moreover, we prove some of these structurs for any cork of Mazur type, unlike the previous construction. We also construct a new example in the non-compact case. Our new strategy is as follows (the non-closed case is different): $(1)$ Construct a ``suitable'' compact Stein $4$-manifold  which contains candidates of corks (possibly after blow ups); $(2)$ Embed it into a minimal closed symplectic $4$-manifold; $(3)$ Do surgeries (possibly after blow ups) and compute Seiberg-Witten invariants; $(4)$ Relate surgeries and cork twists. Now, by this approach, the previously difficult step  $(2)$ is automatically achieved. However, in this case, we need a care on computations of Seiberg-Witten invariants, since we do not know the basic classes of the closed symplectic 4-manifold. 

\vspace{.05in} 

It is a natural question whether every smooth structure on a $4$-manifold can be induced from a fixed cork $(C,\tau)$.
The following theorem (see Section~\ref{sec:knotted cork}) shows that infinitely many different smooth structures on a closed $4$-manifold can be obtained from a fixed cork. Though such an example was given in \cite{AY3} for specific corks, the new method works for any cork of Mazur type. Similarly to \cite{AY3}, we also use Fintushel-Stern's knot surgery for the construction. 
\vspace{.05in} 

\begin{theorem}\label{th:knotting corks}Let $(C, \tau)$ be any cork of Mazur type. Then 
there exist infinitely many simply connected closed smooth $4$-manifolds $X_n$ $(n\geq 0)$ with the following properties:\smallskip 

\begin{itemize}

\item[(1)]  $X_n$ $(n\geq 0)$ are mutually homeomorphic but not diffeomorphic;

\item[(2)] For each $n\geq 1$, $X_n$ is obtained from $X_0$ by a cork twist along $(C, \tau)$. Consequently, the pair $(C,\tau)$ is a cork of $X_0$. 
 
\end{itemize}

\vspace{.05in}
In particular, from $X_0$ we can produce infinitely many different smooth structures by the cork twist along $(C, \tau)$. Consequently, these embeddings of $C$ into $X_0$ are mutually non-isotopic $($knotted copies of each other$)$. 
\end{theorem}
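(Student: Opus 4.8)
The plan is to carry out the four-step strategy of the introduction, now with Theorem~\ref{th:closing of Stein} making the embedding step automatic. First I would write down, by an explicit Kirby diagram, a compact \emph{Stein} $4$-manifold $W$ arranged to contain simultaneously (a) a contractible submanifold $C$ together with a boundary involution $\tau$ that is a candidate cork (an involution of $\partial C$ that does not extend to a self-diffeomorphism of $C$), and (b) a cusp or double-node neighborhood carrying an embedded square-zero torus $T$ suitable for knot surgery. The Stein condition is verified purely combinatorially by Eliashberg's criterion, i.e.\ by checking that each $2$-handle is attached along a Legendrian knot with framing one less than its Thurston--Bennequin number. Then Theorem~\ref{th:closing of Stein} embeds $W$ into a simply connected, minimal, closed, symplectic $4$-manifold $Z$ with $b_2^+(Z)>1$; this is precisely where the new approach pays off, since the delicate embedding of \cite{AY3} now comes for free.

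Next, using the torus $T\subset W\subset Z$, I would perform Fintushel--Stern knot surgery along $T$ with a twist family of knots $K_n$ (for instance the $n$-twist knots), possibly after a rational blowdown, letting $X_n$ be the resulting closed manifolds and $X_0$ the manifold before twisting. These surgeries preserve the fundamental group and the intersection form, so Freedman's theorem yields that the $X_n$ are mutually homeomorphic. To distinguish them smoothly I would compute Seiberg--Witten invariants: minimality together with Taubes' theorem guarantees $SW_Z\neq 0$, while the knot-surgery product formula multiplies the invariant by the Alexander polynomial $\Delta_{K_n}(t)$. Choosing the $K_n$ with pairwise distinct Alexander polynomials makes the basic-class data of the $X_n$ mutually distinct, so the $X_n$ are pairwise non-diffeomorphic, proving~(1).

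Finally I would realize the passage from $X_0$ to $X_n$ as a cork twist along the one fixed pair $(C,\tau)$. The key is a handle-calculus argument identifying the local effect of the $n$-twist knot surgery with a twist along a copy of $C$: the single full twist relating $K_n$ to $K_{n+1}$ is a $\pm 1$-framed modification that is absorbed into how the contractible piece is reglued, so that increasing $n$ only changes the way $C$ sits inside $X_0$ (a \emph{knotted} embedding) and leaves $C$ and $\tau$ fixed. Granting this, the embeddings giving the distinct $X_n$ are forced to be pairwise non-isotopic, because an ambient isotopy carrying one embedded cork onto another would induce a diffeomorphism of the corresponding twists; this is the ``knotted copies'' conclusion. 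I expect Step~4 to be the main obstacle: knot surgery a priori lives in the non-contractible region $T^2\times D^2$, whereas a cork twist is supported in a contractible set, so the real work is to exhibit the precise local picture concentrating the entire $n$-dependence in a fixed contractible $C$ reglued by $\tau$. A secondary difficulty, already flagged in the introduction, is that trading the easy embedding for this construction makes the Seiberg--Witten computation above more delicate, since $T$ need not lie in standard position relative to the symplectic structure supplied by Theorem~\ref{th:closing of Stein}.
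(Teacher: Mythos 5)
Your steps (1)--(3) do track the paper's construction in outline: the paper builds the manifold $S$ of Figure~\ref{fig2}, which disjointly contains a cusp neighborhood and the cork $W_1$, makes it Stein after attaching one extra contact $-1$-framed $2$-handle, embeds the result in a simply connected minimal closed symplectic $\widetilde{S}$ with $b_2^+>1$ via Theorem~\ref{th:closing of Stein}, and then performs knot surgery in the cusp with knots $K_n$ having mutually distinct nontrivial Alexander polynomials, using Freedman and the Fintushel--Stern formula \cite{FS2} exactly as you indicate. The genuine gap is your step (4), which you correctly flag as the main obstacle but do not resolve. There is no ``handle-calculus argument identifying the local effect of the $n$-twist knot surgery with a twist along a copy of $C$'': the $n$-dependence of knot surgery cannot be absorbed locally into the regluing of a fixed contractible piece by the means you describe, and with your choice of base manifold $X_0=Z$ (the closed symplectic manifold itself) no such identification is available --- the mechanism that actually works, run from that base point, only exhibits $Z_{K_n}$ as the result of \emph{two} successive cork twists of $Z$, which is not what part (2) of Theorem~\ref{th:knotting corks} demands.

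The idea you are missing is that $X_0$ must be taken to be the \emph{cork twist} $X$ of $\widetilde{S}$ along $(W_1,f_1)$, and that $S$ must be designed so that this twist splits off an $S^2\times S^2$ connected summand disjoint from the cusp neighborhood (Corollary~\ref{cor:1_X_K}). This has two simultaneous consequences: $SW_X\equiv 0$, and, by the stabilization theorem of Akbulut~\cite{A4} and Auckly~\cite{Au}, knot surgery in the cusp of $X$ is smoothly trivial, i.e.\ $X_K\cong X$ for every knot $K$. Since the cusp is disjoint from $W_1$, knot surgery commutes with the cork twist, so $\widetilde{S}_{K_n}$ is the cork twist of $X_{K_n}$ along $(W_1,f_1)$; composing with the diffeomorphism $X_{K_n}\cong X=X_0$ realizes every $X_n:=\widetilde{S}_{K_n}$ as a cork twist of the single fixed manifold $X_0$ along a differently embedded (knotted) copy of the fixed cork $(W_1,f_1)$. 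Non-diffeomorphism is then immediate, with no delicacy about the position of the torus relative to the symplectic structure: $SW_{X_0}=0$, while the $SW_{\widetilde{S}_{K_n}}$ are nonvanishing and mutually distinct by the Fintushel--Stern formula. Note also that the rational blowdown you invoke as an option plays no role in this theorem; it enters only in the proof of Theorem~\ref{th:disjoint corks}.
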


The next theorem (see Section~\ref{sec:construction}) says that we can put finitely many corks into mutually disjoint positions in closed $4$-manifolds so that corresponding cork twists produce mutually different exotic smooth structures on the $4$-manifolds. We prove this by using Fintushel-Stern's rational blowdown. 

\begin{theorem}[\cite{AY3}]\label{th:disjoint corks}
For each $n\geq 1$, there exist simply connected closed smooth $4$-manifolds $X_{i}$ $(0\leq i\leq n)$ and corks $(C_i,\tau_i)$ $(0\leq i\leq n)$ of $X_0$ with the following properties:

\begin{itemize}

\item[(1)] The submanifolds $C_i$ $(1\leq i\leq n)$ of $X_{0}$ are mutually disjoint;\smallskip

\item[(2)] $X_{i}$ $(1\leq i\leq n)$ is obtained from $X_{0}$ by the cork twist along $(C_i,\tau_i)$;\smallskip 

\item[(3)] $X_{i}$ $(0\leq i\leq n)$ are mutually homeomorphic but not diffeomorphic. 

\end{itemize}

\end{theorem}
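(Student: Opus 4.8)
The plan is to follow, with disjointness built in from the outset, the four-step strategy outlined in the introduction. First I would construct a single compact Stein $4$-manifold $W$ that simultaneously contains $n$ mutually disjoint candidate corks $C_1,\dots,C_n$, each sitting inside its own surgery configuration $P_i$ --- for instance a cusp neighborhood carrying a knot-surgery torus, or a linear plumbing of spheres admitting a rational blowdown (possibly only after blowing up, as allowed in step $(1)$). The whole of $W$ is drawn as one Legendrian handlebody diagram with the $n$ pieces placed side by side, and its Stein property is certified by Eliashberg's criterion, i.e.\ by checking that each $2$-handle is attached along a Legendrian knot with framing one less than its Thurston--Bennequin number. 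Since the $P_i$ (hence the $C_i$) are laid out in disjoint regions of the diagram, property $(1)$ is automatic once the embedding is fixed.

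Next I would invoke Theorem~\ref{th:closing of Stein} to embed $W$ into a simply connected, minimal, closed symplectic $4$-manifold $X_0$ with $b_2^+>1$. This is precisely the step $(2)$ that was the main obstacle in the earlier approach of \cite{AY3} and is now free. I then define $X_i$ $(1\le i\le n)$ by performing the designated surgery only inside the $i$-th configuration $P_i$ (after absorbing any auxiliary blow-ups into the rational blowdown), leaving the others untouched; because the $P_i$ are disjoint these operations are independent and commute.

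The bridge is step $(4)$: I would prove that the surgery supported in $P_i$ is smoothly equivalent to the cork twist of $X_0$ along $(C_i,\tau_i)$. This is a local handle-calculus statement about $P_i$, established exactly as for the compact surgeries of \cite{AY3} (see also \cite{A1},\cite{AY1}); it gives property $(2)$ and shows each $(C_i,\tau_i)$ is a cork of $X_0$. As the twists occur in disjoint regions, twisting $C_i$ does not disturb the other configurations.

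Finally I would separate the smooth structures by Seiberg--Witten theory, which is where the real difficulty lies. Because $X_0$ is symplectic with $b_2^+>1$, its canonical class is a nonzero basic class by Taubes, and minimality rules out stray $(-1)$-spheres that could force vanishing. I would then track how the surgery in $P_i$ alters the invariant --- via the Fintushel--Stern knot-surgery formula (multiplication by the Alexander polynomial) or the rational-blowdown gluing formula --- and arrange the $P_i$, e.g.\ by choosing knots $K_i$ with distinct Alexander polynomials or plumbings of distinct length, so that the invariants of $X_0,X_1,\dots,X_n$ are pairwise distinct. This yields the non-diffeomorphism half of $(3)$; the homeomorphism half follows from Freedman's theorem, since all $X_i$ are simply connected (the corks are contractible and each twist is a homeomorphism) with the same intersection form. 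The hard part is exactly this step $(3)$: unlike the closed-manifold input of the old method, one starts from a Stein piece and must follow the basic classes carefully across the embedding and the non-symplectic surgeries, ruling out cancellation and verifying that the distinguishing invariants genuinely survive.
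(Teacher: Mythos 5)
Your skeleton matches the paper's actual proof: the paper takes the boundary sum of Stein configurations $\widetilde{D}_{p_1},\dots,\widetilde{D}_{p_n}$ (blown-down rational-blowdown plumbings, with the $p_i$ pairwise distinct), embeds it by Theorem~\ref{th:closing of Stein} into a minimal closed symplectic $4$-manifold, blows up $n$ times so that $X_0=S(p_1,\dots,p_n)\#\, n\overline{\mathbf{C}\mathbf{P}^2}$ contains disjoint copies of $D_{p_i}\supset C_{p_i}$, performs the rational blowdowns, and converts them into cork twists along disjoint $(W_{p_i-1},f_{p_i-1})$ via Theorem~\ref{th:cork and rbd}. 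However, the step you defer as ``the hard part'' is exactly where the one essential idea of the proof lives, and your proposal supplies no mechanism for it. To use Fintushel--Stern's rational-blowdown theorems (Theorems~\ref{th:SW1}--\ref{th:SW3}, Corollary~\ref{cor:SW4}) one must show that \emph{every} Seiberg--Witten basic class $K$ of $X_0$ lifts, i.e.\ satisfies $\langle K,u^{(i)}_j\rangle=0$ for $j\le p_i-2$ and $\langle K,u^{(i)}_{p_i-1}\rangle=\pm p_i$; and to distinguish the $X_i$ one needs an \emph{exact count} of basic classes after blowdown, not merely the assertion that the invariant ``changes''. A generic embedding gives no control whatsoever over how basic classes pair with the image of $H_2$ of the Stein piece, so ``ruling out cancellation'' is not a verification at the end --- it is the actual content of the proof, and it is missing.

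The paper fills this gap by designing the Stein piece so that the adjunction inequality does this work automatically. Each $\widetilde{D}_{p_i}$ is augmented by a contact $-1$-framed trefoil $2$-handle attached to each unknot, so that the generating classes ($v$, $u+v$, and the torus-knot class $w_i$) are represented by tori and surfaces realizing equality-threshold genus; since the ambient closed symplectic manifold has $b_2^+>1$ and is of simple type, the adjunction inequality forces $\langle K,\alpha\rangle=0$ for every basic class $K$ and every $\alpha\in\iota_*H_2(\widetilde{D}(p_1,\dots,p_n);\mathbf{Z})$ (Proposition~\ref{prop: S}(3)). This is also precisely why the configurations are embedded in blown-\emph{down} form: the $C_{p_i}$ only appear after blowing the closed manifold back up, so that $u^{(i)}_{p_i-1}$ pairs with the exceptional class $e_i$ exactly as the lift criterion requires, with all other pairings zero (Corollary~\ref{cor:disjoint C_p}). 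Finally, an injectivity-of-restriction argument (Lemma~\ref{lem:restriction}, via Mayer--Vietoris) upgrades the lifting statements to the exact count $N(X_i)=2^{p_i-1}N(X_0)$ (Lemma~\ref{lem:N(X_i)}), and distinctness of the $p_i$ makes these counts, hence the smooth structures, pairwise distinct. Note also that the invariant actually compared is the \emph{number} of basic classes: it is a diffeomorphism invariant that needs no identification of the cohomologies of the different $X_i$, whereas your plan of comparing ``values'' of the invariants across distinct manifolds would require exactly such an identification.
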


\vspace{.05in} 

This theorem easily gives the following corollary which says that, for an embedding of a cork into a closed $4$-manifold, cork twists can produce finitely many different exotic smooth structures on the $4$-manifold by only changing the involution of the cork without changing its embedding: 

\begin{corollary}[\cite{AY3}]\label{cor:involutions of corks}
For each $n\geq 1$, there exist simply connected closed smooth $4$-manifolds $X_{i}$ $(0\leq i\leq n)$, an embedding of a compact contractible Stein $4$-manifold $C$ into $X_{0}$, and involutions $\tau_i$ $(1\leq i\leq n)$ on the boundary $\partial C$ with the following properties: 

\begin{itemize}

\item[(1)] For each $1\leq i\leq n$, $X_{i}$ is obtained from $X_{0}$ by the cork twist along $(C,\tau_i)$ where the above embedding of $C$ is fixed;\smallskip 

\item[(2)] $X_{i}$ $(0\leq i\leq n)$ are mutually homeomorphic but not diffeomorphic, hence the  pairs $(C, \tau_i)$ $(1\leq i\leq n)$ are mutually different corks of $X_{0}$.

\end{itemize}
\end{corollary}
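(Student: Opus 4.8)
The plan is to deduce the corollary from Theorem~\ref{th:disjoint corks} by fusing the $n$ mutually disjoint corks $(C_i,\tau_i)$ $(1\le i\le n)$ inside $X_0$ into a single cork via the boundary connected sum. First I would set $C=C_1\natural C_2\natural\cdots\natural C_n$. Since a boundary connected sum corresponds to attaching a $1$-handle, it preserves the Stein condition, and by van Kampen and Mayer--Vietoris (the gluing region being a $3$-ball) the result is simply connected and acyclic, hence contractible; thus $C$ is a compact contractible Stein $4$-manifold. I would realize $C$ as an embedded submanifold of $X_0$ by joining the disjoint pieces $C_i$ with embedded $1$-handles (tubes) running through $X_0\setminus\bigcup_i\operatorname{int}C_i$, which is possible because $X_0$ is connected. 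This produces the single fixed embedding $C\hookrightarrow X_0$ required by the statement.

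Next I would build the involutions. Writing $\partial C=\partial C_1\#\cdots\#\partial C_n$, for each $1\le i\le n$ I would take the involution of $\partial C$, again denoted $\tau_i$, that restricts to the cork involution $\tau_i$ of $(C_i,\tau_i)$ on the summand $\partial C_i$ and to the identity on every other summand and on the connecting tubes. The key point is that the cork twist of $X_0$ along $(C,\tau_i)$ recovers $X_i$: the twist removes $\operatorname{int}C$ and reglues $C$ by $\tau_i$, but since $\tau_i$ is the identity on the tubes and on each $\partial C_j$ $(j\ne i)$, those pieces are reglued without any change, so the operation is supported in $C_i$ and agrees with the cork twist of $X_0$ along $(C_i,\tau_i)$, which is $X_i$ by Theorem~\ref{th:disjoint corks}(2). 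Each $\tau_i$ extends over $C$ to a self-homeomorphism (extend the homeomorphism of $C_i$ by the identity), so $(C,\tau_i)$ is a genuine cork; and because the twist changes the diffeomorphism type, $\tau_i$ does not extend to a self-diffeomorphism of $C$.

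By Theorem~\ref{th:disjoint corks}(3) the manifolds $X_i$ $(0\le i\le n)$ are mutually homeomorphic but pairwise non-diffeomorphic. Since the $X_i$ $(1\le i\le n)$ now all arise from the one fixed embedding $C\hookrightarrow X_0$ purely by replacing the boundary involution $\tau_i$ with $\tau_j$, this establishes conclusions~(1) and~(2), and in particular shows that the pairs $(C,\tau_i)$ are mutually different corks of $X_0$.

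I expect the one delicate point to be the global well-definedness and smoothness of $\tau_i$ across the connect-sum regions: to glue the cork involution on $\partial C_i$ to the identity on the adjacent tube, the boundary connected sum at $\partial C_i$ must be performed along a disk on which that involution is already the identity. This is exactly where the specific corks of Theorem~\ref{th:disjoint corks} help, since their involutions are the standard handle-pair-exchanging involutions, which are the identity outside a compact region of the boundary; I would therefore route every tube endpoint into such an identity region (there is ample room, as each $C_i$ needs at most two disjoint attaching disks), so that each $\tau_i$ is a bona fide smooth involution of $\partial C$ and the locality argument above applies verbatim.
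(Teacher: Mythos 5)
Your strategy is in essence the paper's own: the paper also fuses the disjoint corks of Theorem~\ref{th:disjoint corks} into one compact contractible Stein manifold, namely the boundary sum $W(p_1-1,\dots,p_n-1)=W_{p_1-1}\natural\cdots\natural W_{p_n-1}$ embedded once in $X_0$, and for $\tau_i$ takes an involution supported on the $i$-th summand (Definition~\ref{def:disjoint cork}(2) and Proposition~\ref{prop:X_i}(2)), with non-diffeomorphism supplied by the same Seiberg--Witten count. So reducing Corollary~\ref{cor:involutions of corks} to the disjoint-cork picture is exactly the right idea.

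The gap is at precisely the point you flag as delicate, and your resolution of it is false. You assert that the cork involutions are ``the identity outside a compact region of the boundary,'' and you use this to glue $f_{k_i}$ on $\partial C_i$ to the identity on the tubes and the remaining summands. But a nontrivial smooth involution of a connected closed manifold is never the identity on any nonempty open set: the fixed-point set of a smooth involution is a closed submanifold, and a codimension-zero component of it would be both open and closed, forcing the involution to be trivial. Since $f_k$ does not extend to any self-diffeomorphism of $W_k$, it is certainly nontrivial, hence it is not the identity near any point of $\partial W_k$; in particular there is no ``identity region'' into which to route a tube, and the piecewise map ``$f_{k_i}$ on $\partial C_i$, identity elsewhere'' is not a well-defined smooth involution (smoothness across the gluing $2$-sphere would require $f_{k_i}$ to be the identity on a neighborhood of it). No choice of routing of the tubes can repair this. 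The paper avoids the issue by never gluing an involution to the identity: the involution $f^i(k_1,\dots,k_n)$ is defined directly on $\partial W(k_1,\dots,k_n)$ as the dot/zero exchange of the $i$-th component of the split handle diagram of the boundary sum (Definition~\ref{def:disjoint cork}(2)); this is an honest involution because it is induced by the symmetric-link structure of that component of the diagram, and the fact that twisting by it changes only the $i$-th piece is the content of Theorem~\ref{th:cork and rbd} and Proposition~\ref{prop:X_i}. Your argument becomes correct if you replace your ad hoc involutions by these $f^i$ (equivalently, perform the boundary sums equivariantly and allow $\tau_i$ to act nontrivially, but in a manner extending to a diffeomorphism, on the summands $C_j$ with $j\neq i$); as written, the construction of the $\tau_i$ as smooth involutions fails.
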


In \cite{AY1}, we introduced new objects which we called {\it plugs}. We can also give simple constructions of various examples for plug structures. 

\vspace{.05in} 

In this paper we also give a new example of cork structures in the non-compact case by using the above embedding theorem. The theorem below (see Section~\ref{sec:Infinitely many disjointly knotted cork}) shows that we can embed a fixed cork into infinitely many mutually disjoint positions in a non-compact $4$-manifold so that corresponding cork twists produce infinitely many mutually different exotic smooth structures on the $4$-manifold:

\begin{theorem}\label{th:infinite corks}Let $(C, \tau)$ be any cork of Mazur type. Then there exist infinitely many simply connected non-compact smooth $4$-manifolds $X_{n}$ $(n\geq 0)$ and infinitely many embedded copies $C_n$ $(n\geq 1)$ of $C$ into $X_0$ with the following properties:\smallskip 

\begin{itemize}

\item[(1)]  $C_n$ $(n\geq 0)$ are mutually disjoint in $X_0$;\smallskip

\item[(2)] $X_{n}$ $(n\geq 1)$ is obtained from $X_{0}$ by the cork twist along $(C_n,\tau)$;\smallskip 

\item[(3)] $X_{n}$ $(n\geq 0)$ are mutually homeomorphic but not diffeomorphic. 

\end{itemize}

Consequently, these infinitely many disjoint embeddings of $C$ into $X_0$ are mutually non-isotopic $($knotted copies of each other$)$. 
%
\end{theorem}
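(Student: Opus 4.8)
The plan is to follow the four-step strategy of the introduction, modified for the non-compact setting where Seiberg-Witten invariants are unavailable directly. First I would fix once and for all a compact contractible Stein cork $(C,\tau)$ together with a local model $N$: a Stein neighborhood of a cusp (or fishtail) fiber carrying an embedded torus $T$ of self-intersection $0$, arranged so that the cork twist along $(C,\tau)$ is realized, inside the block, as a change of knot surgery along $T$. By Theorem~\ref{th:closing of Stein}, any compact Stein piece built from finitely many such blocks embeds into a minimal closed symplectic $4$-manifold with $b_2^+>1$, hence with nonvanishing Seiberg-Witten invariant; this is what makes the surgeries detectable and removes the difficulty of the old step $(2)$.

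Next I would assemble the non-compact ambient manifold $X_0$ so that it contains infinitely many mutually disjoint copies $C_1,C_2,\dots$ of the cork. The essential point is that housing infinitely many \emph{independent} Seiberg-Witten-carrying regions forces $b_2=\infty$, which is precisely why the construction must leave the compact category. Concretely I would take an infinite fiber sum (equivalently, an end-periodic gluing) of local models $N_1,N_2,\dots$ along a common fiber, performing in the $n$-th block a knot surgery with a knot $K_n$ whose Alexander polynomials are chosen pairwise distinct (say of strictly increasing degree, to destroy any shift symmetry), and placing one disjoint copy $C_n$ of the cork in that block. The cork twist along $(C_n,\tau)$ then modifies only the $n$-th block and yields $X_n$, giving conclusions $(1)$ and $(2)$.

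To obtain conclusion $(3)$ I would treat the two halves separately. All $X_n$ are homeomorphic to $X_0$ because $C$ is contractible and, by Freedman, $\tau$ extends to a self-homeomorphism of $C$; regluing a contractible piece by such a map is a homeomorphism of the (non-compact) ambient manifold. For the smooth distinction I would pass to compact truncations: given $n\neq m$, choose a compact codimension zero submanifold containing the first $N$ blocks for some $N>n,m$ and cap it off inside a closed symplectic model supplied by Theorem~\ref{th:closing of Stein}. The Fintushel-Stern product formula together with the gluing behaviour of the Seiberg-Witten invariant expresses the invariant of each closure as a product over blocks of the $\Delta_{K_i}$; since a cork twist alters exactly the factor of the block it meets and the $\Delta_{K_i}$ are pairwise distinct, the closures built from $X_n$ and from $X_m$ carry different Seiberg-Witten data.

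The crux---and the main obstacle---is to promote this compact computation to a genuine diffeomorphism invariant of the non-compact $X_n$. I must engineer the ends so that a hypothetical diffeomorphism $X_n\cong X_m$ is forced to carry the relevant compact Seiberg-Witten-carrying regions onto one another up to the fixed behaviour at infinity, and then extract a contradiction from the differing closed-manifold invariants; controlling the ends so that the interesting data stays confined to a compact set and is respected by every diffeomorphism is exactly where the care lies. Once the $X_n$ are known to be pairwise non-diffeomorphic, the last sentence of the theorem is immediate: were two of the embeddings $C_n\hookrightarrow X_0$ ambiently isotopic, the corresponding cork twists would yield diffeomorphic manifolds, so the embeddings are mutually non-isotopic knotted copies of $C$.
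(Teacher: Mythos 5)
Your construction steps (1) and (2) are plausible, and your homeomorphism argument is fine, but the step you yourself flag as ``the crux'' is a genuine gap, and the mechanism you propose for it would not work. Seiberg--Witten invariants are defined only for closed $4$-manifolds; the invariants of a capped-off truncation are invariants of the \emph{closure}, not of the non-compact manifold, and the closure is highly non-canonical. A hypothetical diffeomorphism $X_n\cong X_m$ need not carry any truncation to a truncation, need not respect your block structure, and induces no relation whatsoever between the Seiberg--Witten invariants of closures chosen on each side. So comparing $\prod_i\Delta_{K_i}$ over closures of the two sides proves nothing; ``engineering the ends'' so that every diffeomorphism confines the interesting data to a fixed compact set is not a technical detail to be filled in later --- it is the entire difficulty of the non-compact case, and your outline contains no idea for it.

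The paper resolves this by switching to an invariant that is manifestly diffeomorphism-invariant for open manifolds: a minimal-genus condition on bases of $H_2$. It takes $X_0$ to be an infinite \emph{boundary sum} of compact pieces $M_i$ (not a fiber sum, and with no knot surgery), where the cork twist of $M_i$ along $(W_1,f_1)$ is a manifold $N_i$ whose generator of $H_2(N_i;\mathbf{Z})\cong\mathbf{Z}$ has minimal genus exactly $i$: the upper bound is an explicit embedded surface (Lemma~\ref{lem:genus and embedding of M_n}), and the lower bound (Lemma~\ref{lem:upper bound of genus of N_n}) is where Theorem~\ref{th:closing of Stein} enters --- a related Stein manifold $\widetilde{N}_n$ is embedded in a minimal closed symplectic manifold, which is blown up so that the adjunction inequality forces genus $\geq n$ on any surface representing a nonzero multiple of the generator. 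The distinguishing invariant of $X_n$ is then the smallest $g$ such that $H_2(X_n;\mathbf{Z})$ admits a basis with one element of genus $g$ and all others represented by spheres; Proposition~\ref{prop:basis of X_n} shows this equals $n+1$. Note also that the proof of Proposition~\ref{prop:basis of X_n}(2) uses the fact that each $M_i$, $N_i$ embeds in $D^4$, so that $X_n$ embeds in $N_{n+1}$ with all other components' homology killed --- an argument unavailable in your fiber-sum setup. In short, the embedding theorem is used in this section to \emph{detect} smooth structures via adjunction in a closed symplectic ambient manifold, not to close up the manifold and compute Seiberg--Witten invariants, and replacing your step with this genus argument is what a correct proof requires.
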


\vspace{.05in} 

The proof of this theorem immediately give the following corollary which says that the smallest 4-manifold $S^4$ has arbitrary many compact submanifolds which are mutually homeomorphic but not diffeomorphic. Furthermore they are obtained by a disjointly embedded fixed cork. 

\vspace{.05in}  

\begin{corollary}\label{cor:finite exotic in S^4}Let $(C, \tau)$ be any cork of Mazur type. Then, for each $n\geq 1$, there exist simply connected compact smooth $4$-manifolds $X^{(n)}_{i}$ $(0\leq i \leq n)$ and embedded copies $C_i$ $(1\leq i\leq n)$ of $C$ into $X_0$ with the following properties:\smallskip 

\begin{itemize}
\item[(1)] $C_i$ $(1\leq i\leq n)$ are mutually disjoint in $X_0$; 

\item[(2)] $X^{(n)}_{i}$ $(1\leq i \leq n)$ is obtained from $X_0$ by the cork twist along $(C_i,\tau)$;

\item[(3)] $X^{(n)}_{i}$ $(0\leq i \leq n)$ can be embedded into $S^4$;  

\item[(4)] $X^{(n)}_{i}$ $(0\leq i \leq n)$ are mutually homeomorphic but not diffeomorphic.

\end{itemize}
%
\end{corollary}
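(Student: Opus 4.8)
The plan is to extract the corollary directly from the construction underlying Theorem~\ref{th:infinite corks}, passing from the non-compact manifolds to compact cores while keeping track of how the pieces sit inside $S^4$. Fix $n\geq 1$. In the proof of Theorem~\ref{th:infinite corks} the non-compact manifold $X_0$ is assembled from building blocks, each carrying one of the disjoint corks $C_m$ and each embeddable in a ball (equivalently, in $S^4$). First I would choose a compact codimension-zero submanifold $X^{(n)}_0\subset X_0$ that contains the first $n$ disjoint copies $C_1,\dots,C_n$ together with the finite collection of building blocks realizing them, arranged so that $X^{(n)}_0$ itself embeds in $S^4$; this is where compactness and simple connectivity are recovered. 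I would then define $X^{(n)}_i$ $(1\leq i\leq n)$ to be the result of the cork twist on $X^{(n)}_0$ along $(C_i,\tau)$. Properties (1) and (2) are then immediate: disjointness of the $C_i$ is inherited from Theorem~\ref{th:infinite corks}, and (2) holds by definition.

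For property (3) I would argue that cork twisting preserves embeddability in $S^4$. Since each $C_i$ is a contractible piece standardly embedded in $S^4$ (so that twisting the whole $S^4$ along $(C_i,\tau)$ returns $S^4$, the involution extending over the contractible complement), applying this global twist to $S^4\supset X^{(n)}_0$ carries $X^{(n)}_0$ onto $X^{(n)}_i$ while carrying $S^4$ onto $S^4$; hence $X^{(n)}_i\hookrightarrow S^4$ for every $i$. Because the $C_i$ are mutually disjoint, the twists along distinct corks commute and can be performed inside the same copy of $S^4$, so all $n+1$ manifolds embed as codimension-zero submanifolds. For the homeomorphism part of (4), a cork twist changes neither the intersection form nor the simply connected homotopy type, so Freedman's classification yields a homeomorphism $X^{(n)}_i\cong X^{(n)}_j$.

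The main obstacle is the non-diffeomorphism claim in (4), which does not follow formally and must be imported from the Seiberg-Witten computations in the proof of Theorem~\ref{th:infinite corks}. Here I would cap off, or embed, each $X^{(n)}_i$ into the corresponding closed model used in that proof and invoke the relevant gluing formulas to compare Seiberg-Witten basic classes, showing that distinct $i$ yield distinct diffeomorphism types. The delicate point is the relative nature of the invariants for manifolds with boundary: one must verify that truncating $X_0$ to the compact core $X^{(n)}_0$ does not destroy the distinguishing invariant, so that the finitely many twists remain detectably different. Granting the theorem's computation, the $X^{(n)}_i$ $(0\leq i\leq n)$ are mutually homeomorphic but pairwise non-diffeomorphic, which completes the corollary.
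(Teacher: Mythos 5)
Your skeleton --- take the finite boundary sum of the blocks from Section~\ref{sec:Infinitely many disjointly knotted cork} and twist one block at a time --- is exactly the paper's definition of $X^{(n)}_i$, but both substantive steps are carried out differently there, and as written each of yours has a gap. The serious one is property (3). You argue that $X^{(n)}_i\hookrightarrow S^4$ because cork-twisting $S^4$ along $(C_i,\tau)$ ``returns $S^4$, the involution extending over the contractible complement.'' This is unjustified: by the very definition of a cork, $\tau$ does not extend to a diffeomorphism of $C_i$, and nothing makes it extend over the complement $S^4\setminus \operatorname{int}C_i$ either (that complement need not even be simply connected, and if $\tau$ did extend over it, the twisted $S^4$ would tautologically be standard --- which is exactly what you are trying to prove). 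A cork twist of $S^4$ is a priori only a homotopy $4$-sphere, hence homeomorphic to $S^4$ by Freedman; whether it is \emph{diffeomorphic} to $S^4$ for the particular embedding forced by the construction is precisely the kind of statement one cannot wave through (it is of the same nature as the smooth $4$-dimensional Poincar\'e conjecture). The paper sidesteps this entirely: Lemma~\ref{lem:genus and embedding of M_n}(2) proves by explicit handle moves (Figures~\ref{fig15} and~\ref{fig16}) that \emph{both} $M_n$ \emph{and its cork twist} $N_n$ are sub-handlebodies of $D^4$; since each $X^{(n)}_i$ is a boundary sum of blocks each embeddable in $D^4$, it embeds in $D^4\subset S^4$. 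Embeddability of the twisted manifold is proved directly, not inherited from the untwisted one.

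Second, for property (4) you defer to ``Seiberg--Witten computations'' and ``gluing formulas'' and explicitly grant the transfer from the non-compact $X_n$ to the compact cores, but that is not what the proof of Theorem~\ref{th:infinite corks} does, and it cannot work in the form you state: the $X^{(n)}_i$ have boundary, so they carry no Seiberg--Witten invariants, and Seiberg--Witten basic classes of an ambient closed manifold are not invariants of an embedded piece (a diffeomorphism of the pieces need not extend to the closed models, which are not canonically attached to them). The actual distinguishing tool is the intrinsic minimal-genus invariant of Proposition~\ref{prop:basis of X_n}: $X^{(n)}_i$ admits a basis of $H_2$ with one class of genus $i+1$ and the rest spheres (Lemma~\ref{lem:genus and embedding of M_n}(1)), but admits no basis consisting of classes of genus at most $i$. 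Indeed, any class of genus at most $i$ maps, under the embedding $X^{(n)}_i\hookrightarrow N_{i+1}$ obtained by pushing the other blocks into $D^4$, to a multiple of the generator of $H_2(N_{i+1};\mathbf{Z})$, which Lemma~\ref{lem:upper bound of genus of N_n} (proved via the Stein embedding Theorem~\ref{th:closing of Stein} plus the adjunction inequality in a closed symplectic manifold) forces to be zero; hence all such classes lie in a corank-one sublattice and cannot contain a basis. This argument is purely about the compact blocks, which is why the truncation worry you raise simply does not arise and why the paper can say the corollary's proof is ``almost the same'' as that of the theorem. (Also, the homeomorphism half of (4) needs no appeal to Freedman's classification, which is in any case delicate for manifolds with boundary: it is immediate from the definition of a cork, since $\tau$ extends to a self-homeomorphism of $C$.)
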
\vspace{.05in} 
After the first draft of this paper,  a more systematic construction of exotic (Stein) 4-manifolds was given in our paper~\cite{AY5}. 
\medskip \\
\textbf{Acknowledgements.} The second author would like to thank Kenji Fukaya and Yuichi Yamada for useful comments.

\section{Corks}\label{sec:cork}
In this section, we recall corks. For details, see~\cite{AY1}.

\begin{definition}\label{sec:cork:def:cork}
Let $C$ be a compact contractible Stein $4$-manifold with boundary and $\tau: \partial C\to \partial C$ an involution on the boundary. 
We call $(C, \tau)$ a \textit{Cork} if $\tau$ extends to a self-homeomorphism of $C$, but cannot extend to any self-diffeomorphism of $C$. 
For a cork $(C,\tau)$ and a smooth $4$-manifold $X$ which contains $C$, a {\it cork twist of $X$ along $(C,\tau)$ }is defined to be the smooth $4$-manifold obtained from $X$ by removing the submanifold $C$ and regluing it via the involution $\tau$.  Note that, a cork twist does not change the homeomorphism type of $X$ (see the remark below). A cork $(C, \tau)$ is called a cork of $X$ if the cork twist of $X$ along $(C, \tau)$ is not diffeomorphic to $X$. 
\end{definition}
\begin{remark}
In this paper, we always assume that corks are contractible. (We did not assume this in the more general definition of~\cite{AY1}.) Note that Freedman's theorem tells us that every self-diffeomorphism of the boundary of $C$ extends to a self-homeomorphism of $C$ when $C$ is a compact contractible smooth $4$-manifold. 
\end{remark}

\begin{definition}Let $W_n$ be the contractible smooth 4-manifold shown in Figure~$\ref{fig1}$. Let $f_n:\partial W_n\to \partial W_n$ be the obvious involution obtained by first surgering $S^1\times D^3$ to $D^2\times S^2$ in the interior of $W_n$, then surgering the other embedded $D^2\times S^2$ back to $S^1\times D^3$ (i.e. replacing the dot and ``0'' in Figure ~$\ref{fig1}$). Note that the diagram of $W_n$ comes from a symmetric link.
\begin{figure}[ht!]
\begin{center}
\includegraphics[width=1.1in]{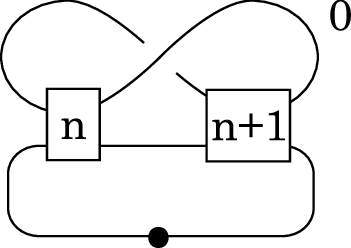}
\caption{$W_n$}
\label{fig1}
\end{center}
\end{figure}
\end{definition}
In \cite{AY1} a quick proof of the following theorem was given, by using the embedding theorem of Stein $4$-manifolds. 

\vspace{.05in} 

\begin{theorem}[{\cite[Theorem 2.5]{AY1}}]\label{th:cork}
For $n\geq 1$, the pair $(W_n, f_n)$ is a cork. 
\end{theorem}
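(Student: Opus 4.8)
The plan is to verify the two defining properties of a cork for $(W_n, f_n)$: that $W_n$ is a compact contractible Stein $4$-manifold, and that the involution $f_n$ extends to a self-homeomorphism of $W_n$ but to no self-diffeomorphism. First I would confirm contractibility directly from the handle diagram in Figure~\ref{fig1}: the manifold is built from a single $1$-handle (the dotted circle) and a single $2$-handle, and the $2$-handle is attached so that it geometrically cancels the $1$-handle algebraically (linking number $\pm 1$), so $H_*(W_n)$ is that of a point and $\pi_1(W_n)=1$; since it is a simply connected homology ball with boundary, it is contractible. To see that $W_n$ admits a Stein structure, I would appeal to Eliashberg's criterion (cited in the excerpt via~\cite{E1}, \cite{GS}, \cite{OS1}): it suffices to isotope the diagram into Legendrian position and check that the $2$-handle is attached along its Legendrian knot with framing one less than the Thurston--Bennequin number. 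This is the routine ``check the Thurston--Bennequin framings'' step, and for the symmetric diagram of $W_n$ it works out for all $n\geq 1$.

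Next I would handle the existence of the homeomorphism extension. Since $W_n$ is a compact contractible smooth $4$-manifold, the Remark following the definition of cork (which records Freedman's theorem) tells us immediately that \emph{any} self-diffeomorphism of $\partial W_n$ — in particular the involution $f_n$ — extends to a self-homeomorphism of $W_n$. So this half is essentially free once contractibility is established.

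The heart of the argument, and the main obstacle, is showing that $f_n$ does \emph{not} extend to a self-diffeomorphism of $W_n$. Here I would argue by contradiction using the embedding theorem (Theorem~\ref{th:closing of Stein}) exactly as advertised in the excerpt: if $f_n$ extended to a diffeomorphism, then the cork twist of any ambient $4$-manifold $X\supset W_n$ along $(W_n,f_n)$ would be diffeomorphic to $X$. The strategy is therefore to exhibit an ambient manipulation distinguishing $X$ from its cork twist. Concretely, I would embed the Stein manifold $W_n$ into a minimal closed symplectic $4$-manifold $Z$ with $b_2^+>1$ via Theorem~\ref{th:closing of Stein}, so that $Z$ has nonvanishing Seiberg--Witten invariant, and then show that the cork twist (equivalently, the effect of the handle-swap replacing the dot and the $0$-framing) changes $Z$ into a manifold whose Seiberg--Witten invariant vanishes — typically because the twisted manifold either admits a positive-scalar-curvature region or splits off a connected-summand that kills the invariant. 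Since diffeomorphic manifolds have the same Seiberg--Witten invariants, this contradiction forces $f_n$ to be non-extendable as a diffeomorphism. The delicate point is arranging the embedding and computing the two Seiberg--Witten invariants; I expect this computation to be the genuinely hard step, and it is precisely where the minimality and $b_2^+>1$ conclusions of Theorem~\ref{th:closing of Stein} are needed to guarantee the nonvanishing on the untwisted side.
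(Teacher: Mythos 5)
Your first three steps (contractibility via handle cancellation, Steinness via Thurston--Bennequin framings, and the homeomorphism extension via Freedman's theorem) are correct and are exactly the routine part of the argument. The gap is in your fourth step, which is the heart of the matter. Embedding $W_n$ alone into a minimal closed symplectic $4$-manifold $Z$ via Theorem~\ref{th:closing of Stein} gives you \emph{no control whatsoever over the complement} $Z\setminus W_n$, and therefore no way to compute, or even constrain, the Seiberg--Witten invariant of the cork twist of $Z$. Your hope that the twisted manifold ``splits off a connected summand'' or ``admits a positive-scalar-curvature region'' is not something you can extract from the embedding theorem: those phenomena occur in this paper's Section~\ref{sec:knotted cork} only because the manifold that gets embedded there is not $W_1$ itself but the larger Stein manifold $S$, which carries $W_1$ \emph{together with} extra handles (a disjoint cusp neighborhood and a $-2$-framed $2$-handle) arranged precisely so that after the twist one can see an $S^2\times S^2$ summand and then run the knot-surgery computation. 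For a bare, uncontrolled embedding of $W_n$, the cork twist could perfectly well return a manifold diffeomorphic to $Z$, so no contradiction arises.

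The quick proof cited by the paper (from \cite{AY1}) repairs exactly this defect, and the repair is the one idea missing from your proposal: instead of embedding $W_n$, one attaches a $2$-handle to $W_n$ along a suitably chosen Legendrian curve $\gamma\subset\partial W_n$ with contact $-1$ framing, so that the enlarged handlebody is still Stein, and embeds \emph{that} into a minimal symplectic $Z$ with $b_2^+>1$. One then checks by Kirby calculus that attaching the same handle along $f_n(\gamma)$ instead produces a manifold containing a smoothly embedded sphere of self-intersection $-1$ (or a surface violating the adjunction inequality). If $f_n$ extended to a self-diffeomorphism of $W_n$, these two handlebodies would be diffeomorphic, transporting that sphere into $Z$ and contradicting minimality. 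In other words, the contradiction is not obtained by comparing Seiberg--Witten invariants of $Z$ and its twist, but by comparing two explicit handlebodies built from $W_n$, only one of which can sit inside a minimal symplectic manifold. Without this (or an equivalent device, such as the cusp-plus-knot-surgery construction of Section~\ref{sec:knotted cork}), your step four cannot be completed as stated.
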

\vspace{.05in} 

After the first draft of this paper, the following type corks are introduced in \cite{AKa}. 
\begin{definition}[\cite{AKa}] Let $C$ be a 4-dimensional oriented handlebody whose handlebody diagram consists of a dotted unknot $K_1$ and a $0$-framed unknot $K_2$. Let $L$ be the link in $S^3$ which consists of $K_1$ and $K_2$.  Suppose that $C$ satisfies the following conditions. 
\begin{itemize}
 \item [(1)] The link $L$ is symmetric. Namely, there exists a smooth isotopy of $S^3$ which exchanges the components $K_1$ and $K_2$ of $L$. \smallskip
 \item [(2)] The linking number of $K_1$ and $K_2$ is $\pm 1$. \smallskip
 \item [(3)] After converting the 1-handle notation of $C$ to the ball notation, $C$ becomes a Stein handlebody (i.e. the maximal Thurston-Bennequin number of $K_2$ with respect to the unique Stein fillable contact structure on $S^1\times S^2=\partial (S^1 \times D^3)$ is at least $+1$.)
\end{itemize}
Let $C'$ be the 4-manifold obtained by first surgering $S^1\times D^3$ to $D^2\times S^2$ inside $C$, then surgering the other embedded $D^2\times S^2$ back to $S^1\times D^3$ (i.e. exchanging the dot and $0$ in the handle picture of $C$). Since these surgeries were done in the interior of $C$, this operation gives a natural diffeomorphism $\varphi: \partial C \to \partial C'$. On the other hand, the condition $(1)$ gives a diffeomorphism $\psi: C\to C'$. Now let $\tau: \partial C\to \partial C$ be the involution defined by $\tau=(\psi|_{\partial C})^{-1}\circ \varphi$ ($\tau^2$ corresponds to the operation exchanging the dot and $0$ twice). The condition $(2)$ guarantees that $C$ is contractible. In this paper, we call such a pair $(C,\tau)$ \textit{a cork of Mazur type}. 
\end{definition}\vspace{.05in} 
Note that any cork of Mazur type indeed corks in the sense of Definition~\ref{sec:cork:def:cork}. This can be easily seen by applying the method of \cite{AM1} (cf.\ \cite{AY1}). The above $(W_n, f_n)$ $(n\geq 1)$ is clearly a cork of Mazur type. Note that $D^4$ cannot be a cork. The lemma below is sometimes useful. 
\vspace{.05in}
\begin{lemma}\label{cork:lem:S4} Let $(C, \tau)$ be a cork of Mazur type. Then its double $DC$ and the cork twist of $DC$ along $(C,\tau)$ are diffeomorphic to $S^4$.
\end{lemma}
\begin{proof}Consider natural handlebodies of these 4-manifolds. Since the unique dotted circle of the cork twist of $DC$ has a $0$-framed meridian, the latter claim is easily follows. Here note that the attaching circle $K_2$ of the unique 2-handle of $C$ is homotopic to the meridian of the dotted circle $K_1$ in the boundary $\partial(S^1\times D^3)$ of the sub 1-handlebody of $C$. This homotopy can be seen as changes of self-crossings of the attaching circle in the handlebody picture. Since $K_2$ has a $0$-framed meridian in the handlebody picture of $DC$, these crossing changes can be realized as handle slides. We can thus easily see that $DC$ is diffeomorphic to $S^4$. 
\end{proof}


\section{Infinitely many knotted corks}\label{sec:knotted cork}

Here we prove Theorem~\ref{th:knotting corks} by using the embedding theorem of Stein $4$-manifolds together with Fintushel-Stern knot surgery. For simplicity, we give a proof for the $(W_1,f_1)$ cork. The same argument holds for any cork of Mazur type. \medskip 

\begin{definition}\label{def:S in knotted corks}
Let $S$ be the compact $4$-manifold with boundary in Figure~\ref{fig2}. Note that $S$ disjointly contains a cusp neighborhood and $W_1$. 
\begin{figure}[ht!]
\begin{center}
\includegraphics[width=2.6in]{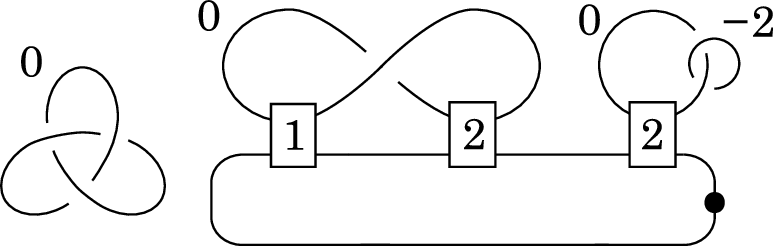}
\caption{$S$}
\label{fig2}
\end{center}
\end{figure}
\end{definition}

The embedding theorem of Stein $4$-manifold gives the minimal symplectic closed 4-manifold below:  

\begin{proposition}\label{prop: widetilde{S} of knotting corks}
There exists a simply connected, minimal, closed, symplectic $4$-manifold $\widetilde{S}$ with  the following properties: 

\begin{itemize}

\item[(1)] $b_2^+(\widetilde{S})>1$; \smallskip

\item[(2)] $\widetilde{S}$ contains the $4$-manifold $S$;\smallskip

\item [(3)] A naturally embedded torus in the cusp neighborhood of $S$ represents a non-zero second homology class of $\widetilde{S}$.

\end{itemize}

\end{proposition}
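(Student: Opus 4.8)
The plan is to obtain $\widetilde{S}$ directly as the output of the embedding theorem (Theorem~\ref{th:closing of Stein}) applied to $S$, so that the whole proposition reduces to two tasks: (i) exhibiting a Stein structure on $S$, which will immediately give conditions (1) and (2); and (ii) verifying the homological condition (3) for the resulting embedding, which I expect to be the genuine difficulty.

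First I would equip $S$ with a Stein structure by appealing to Eliashberg's criterion (\cite{E1}): it suffices to redraw the handlebody of Figure~\ref{fig2} as a Legendrian diagram in which every $2$-handle is attached along a Legendrian knot with framing one less than its Thurston--Bennequin number. The cusp neighborhood is the trace of a $0$-framed $2$-handle along a trefoil; choosing the Legendrian representative of the positive trefoil, whose maximal Thurston--Bennequin number is $1$, realizes this $0$-framing as the Stein framing $tb-1=0$, so the cusp neighborhood is Stein. The other piece $W_1$ is a compact contractible Stein $4$-manifold by Theorem~\ref{th:cork}, hence also admits a compatible Legendrian picture. Assembling these disjoint Stein pieces shows $S$ is a compact Stein $4$-manifold, and applying Theorem~\ref{th:closing of Stein} embeds it into a simply connected, minimal, closed, symplectic $4$-manifold $\widetilde{S}$ with $b_2^+(\widetilde{S})>1$. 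This gives (1) and (2) at once, and fixes $T$ as the regular fiber in the cusp neighborhood $\subset S\subset\widetilde{S}$, an embedded torus with $[T]^2=0$ and $g(T)=1$.

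The main obstacle is property (3), that $[T]\neq0$ in $H_2(\widetilde{S})$. I would first note that the naive symplectic-area argument is unavailable here: a Stein domain contains no closed complex curve, so $T$ is not holomorphic, and $\int_T\omega$ may well vanish, giving no information. Instead I would use that $\widetilde{S}$ is closed and simply connected, so its intersection form is unimodular; consequently $[T]\neq0$ is equivalent to the existence of a class $\beta\in H_2(\widetilde{S})$ with $\beta\cdot[T]\neq0$, i.e.\ to the meridian of $T$ being homologically trivial in $\widetilde{S}\setminus T$. To produce such a dual class I would follow $T$ through the Akbulut--Ozbagci proof of Theorem~\ref{th:closing of Stein} (\cite{AO3}), where $\widetilde{S}$ is built by capping the contact boundary of $S$ with a symplectic (achiral) Lefschetz fibration, and arrange the cusp of $S$ to appear as a singular fiber of a global fibration on $\widetilde{S}$, so that $T$ is homologous to a regular fiber and meets a capping surface once. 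The delicate step, and the part I expect to be hardest, is exactly this identification: tracking the cusp fiber through the capping procedure rather than treating Theorem~\ref{th:closing of Stein} as a black box. This is also the point where the ``more careful treatment'' of Seiberg--Witten invariants mentioned in the introduction enters, since the nonvanishing of $[T]$ is precisely what makes the later Fintushel--Stern knot-surgery computations along $T$ nontrivial.
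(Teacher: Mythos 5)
Your handling of properties (1) and (2) is exactly the paper's: redraw $S$ as a Legendrian diagram in which every $2$-handle has contact $-1$ framing (the $0$-framing on the trefoil being realized as $tb-1$ with $tb=1$), conclude $S$ is Stein, and invoke Theorem~\ref{th:closing of Stein}. Your reduction of (3) is also sound as far as it goes: since $\widetilde{S}$ is closed and simply connected, the intersection form is unimodular, so $[T]\neq 0$ is equivalent to producing a class $\beta$ with $\beta\cdot[T]\neq 0$.

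The gap is that you never actually produce $\beta$, and the route you propose for it is unlikely to work. In the Akbulut--Ozbagci proof of Theorem~\ref{th:closing of Stein}, the structure that gets capped off is a positive allowable Lefschetz fibration on the Stein domain, whose fibers are compact surfaces \emph{with boundary} (pages of the induced open book on $\partial S$); the closed fibration one obtains on $\widetilde{S}$ after capping has those capped-off pages as fibers. The cusp-neighborhood torus $T$ is a closed surface in the interior of $S$ and is not a fiber of that fibration, nor is the cusp a singular fiber of it, so there is no reason for $T$ to be homologous to a regular fiber or to meet a capping surface once; making your ``identification'' precise would require an essentially new argument. The paper sidesteps all of this with one move made \emph{before} applying the embedding theorem: attach to the Legendrian diagram of $S$ an extra $2$-handle along a Legendrian meridian of the trefoil, again with contact $-1$ framing (Figure~\ref{fig3}). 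The enlarged handlebody still has all framings equal to contact $-1$, hence is Stein, and Theorem~\ref{th:closing of Stein} embeds it (and thus $S$) into a suitable $\widetilde{S}$. The meridian handle gives an embedded sphere of self-intersection $-2$ in $\widetilde{S}$ which, because the meridian links the trefoil once, meets $T$ algebraically once; this sphere is precisely the dual class $\beta$, and $[T]\neq 0$ follows. In short, the dual class should be engineered into the Stein domain by hand, not extracted from the internals of the capping construction.
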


\begin{proof} Change the diagram of $S$ into the Legendrian diagram in Figure~\ref{fig3} (in particular change the notation of the $1$-handle from the dotted unknot to pair of balls). Here the coefficients of the $2$-handles denote the contact framings. Then attach a $2$-handle to $S$ along the dotted meridian in Figure~\ref{fig3} with contact $-1$ framing. Since each framing is contact $-1$ framing, this new handlebody is also a compact Stein $4$-manifold. Hence Theorem~\ref{th:closing of Stein} gives us a simply connected, minimal, closed, symplectic $4$-manifold $\widetilde{S}$ with $b_2^+(\widetilde{S})>1$ which contains this handlebody. The $4$-manifold $\widetilde{S}$ has the property $(3)$, because the torus in the cusp neighborhood algebraically intersects a sphere with the self-intersection number $-2$. 
\begin{figure}[ht!]
\begin{center}
\includegraphics[width=3.0in]{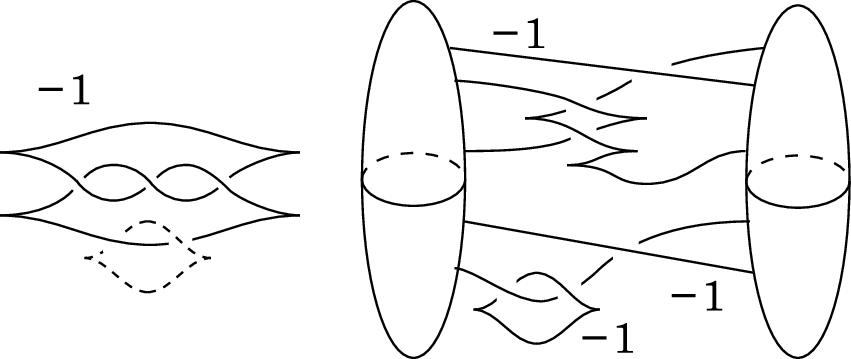}
\caption{Legendrian diagram of $S$ with contact framings}
\label{fig3}
\end{center}
\end{figure}
\end{proof}

Though the rest of argument in this section is almost the same as \cite{AY3}, we proceed the proof for the completeness.

\vspace{.05in}

\begin{definition}\label{def:X of knotting corks}  \hspace{-.1in}

\begin{itemize}

\item[(1)] Let $X$ be the cork twist of $\widetilde{S}$ along $(W_1, f_1)$, where this copy of $W_1$ is the one contained in $S$. Note that $X$ contains a cusp neighborhood because the copy of $W_1$ in $S$ is disjoint from the cusp neighborhood of $S$.\smallskip 

\item [(2)] Let $K$ be a knot in $S^3$,  and $X_K$ denote the manifold obtained by the  (Fintushel-Stern's) knot surgery operation with $K$ in the cusp neighborhood of $X$  (\cite{FS2}) . 
\smallskip 

\item[(3)] Let $\widetilde{S}_K$ be the knot surgered $\widetilde{S}$ with $K$ in the cusp neighborhood of $\widetilde{S}$. 

\end{itemize}

\end{definition}\smallskip

The corollary below clearly follows from Proposition~\ref{prop: widetilde{S} of knotting corks}, Definition~\ref{def:X of knotting corks} and the diagram of $S$ in Figure~\ref{fig2}. See also Figure~\ref{fig4}. 


\vspace{.1in}

\begin{corollary}\label{cor:1_X_K}   \hspace{-.1in}
\begin{itemize}
\item[(1)] The copy of $W_1$ in $X$ $($given in Definition~$\ref{def:X of knotting corks}.$$)$ is disjoint from the cusp neighborhood of $X$. \smallskip 

\item[(2)] $X$ splits off $S^2\times S^2$ as a connected summand. Consequently,  the Seiberg-Witten invariant of $X$ vanishes. Furthermore, the cusp  neighborhood of $X$ is disjoint from $S^2\times S^2$, in this connected sum decomposition of $X$. \smallskip 

\item[(3)]  $\widetilde{S}_K$ is obtained from $X_K$ by a cork twist along $(W_1,f_1)$.  In particular, $X_K$  is homeomorphic to $\widetilde{S}_K$. \smallskip 

\end{itemize}

\end{corollary}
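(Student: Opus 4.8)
The plan is to verify the three items in turn, using as the structural backbone the facts recorded in Section~\ref{sec:cork} that a cork twist is an involutive operation which changes only the interior of the cork and preserves the homeomorphism type, together with the disjointness of $W_1$ and the cusp neighborhood visible in the diagram of $S$ (Figure~\ref{fig2}). Only the $S^2\times S^2$-splitting in item~(2) requires genuine handle calculus; the other two items are formal.

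For~(1), I would note that the cork twist producing $X$ from $\widetilde{S}$ removes the copy of $W_1\subset S$ and reglues it by $f_1$, so it alters $\widetilde{S}$ only within $W_1$. Since in $S$ this copy of $W_1$ is disjoint from the cusp neighborhood (Definition~\ref{def:S in knotted corks}, Figure~\ref{fig2}), the cusp neighborhood lies in the unchanged part $\widetilde{S}\setminus W_1$ and is carried untouched into $X$; hence it is disjoint from the reglued $W_1$ in $X$.

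For~(2), the strategy is to perform $f_1$ directly on the handlebody of $S$: in the $W_1$ subdiagram one exchanges the dot and the ``$0$'' (this is precisely the involution $f_1$), and then slides and cancels handles in the resulting picture to exhibit a standard $S^2\times S^2$ summand (a split $0,0$-framed Hopf link together with the $4$-handle). Because the whole $W_1$ region is disjoint from the cusp in Figure~\ref{fig2}, this summand is disjoint from the cusp neighborhood; and since it sits in the interior of the twisted copy of $S$, it splits off as an interior connected summand of the closed manifold, giving $X\cong X_0\,\#\,(S^2\times S^2)$ with the cusp neighborhood contained in $X_0$. The vanishing of the Seiberg--Witten invariant then follows from the connected-sum vanishing theorem, since $S^2\times S^2$ has $b_2^+=1>0$ and $b_2^+(X_0)=b_2^+(\widetilde{S})-1>0$ (the cork twist preserves $b_2^+$, and $b_2^+(\widetilde{S})>1$ by Proposition~\ref{prop: widetilde{S} of knotting corks}). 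I expect the explicit handle moves producing this $S^2\times S^2$ to be the only real obstacle; everything else is bookkeeping.

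For~(3), I would use that the cork twist is involutive: since $f_1^2=\mathrm{id}$, twisting $X$ again along $(W_1,f_1)$ recovers $\widetilde{S}$. By item~(1) the knot surgery defining $X_K$ is supported in the cusp neighborhood, which is disjoint from $W_1$, so this surgery and the cork twist are performed in disjoint regions and hence commute. Applying the cork twist to $X_K$ therefore produces the knot surgery on $\widetilde{S}$ in its cusp neighborhood, namely $\widetilde{S}_K$. Finally, because a cork twist does not change the homeomorphism type (Section~\ref{sec:cork}), $X_K$ is homeomorphic to $\widetilde{S}_K$.
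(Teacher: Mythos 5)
Your proposal is correct and takes essentially the same route as the paper: the paper offers no separate proof, stating only that the corollary follows from Proposition~\ref{prop: widetilde{S} of knotting corks}, Definition~\ref{def:X of knotting corks} and the diagram of $S$ in Figure~\ref{fig2} (see also Figure~\ref{fig4}), and your three steps --- disjointness of supports for (1), the dot/zero exchange plus handle slides yielding a split $0,0$-framed Hopf link and the connected-sum vanishing theorem for (2), and involutivity of the twist together with commutativity of disjointly supported operations and Freedman's theorem for (3) --- are precisely the argument those figures encode. The one step you defer, namely the explicit handle moves in the twisted copy of $S$ exhibiting the $S^2\times S^2$ summand, is exactly the step the paper likewise leaves to the reader via Figure~\ref{fig2}.
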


\newpage

\begin{corollary}\label{cor:2_X_K}    \hspace{-.1in}
\begin{itemize}
\item[(1)] $X_K$ is diffeomorphic to $X$. In particular, the Seiberg-Witten invariant of $X_K$ vanishes. \smallskip 

\item[(2)] For each knot $K$ in $S^3$, $\widetilde{S}_K$ is obtained from $X$ by a cork twist along $(W_1,f_1)$.  \smallskip 

\item[(3)] If $K$ in $S^3$ has non-trivial Alexander polynomial, then $\widetilde{S}_K$ is homeomorphic but not diffeomorphic to $X$, in particular $(W_1,f_1)$ is a cork of $X$.  \smallskip 

\end{itemize}

\end{corollary}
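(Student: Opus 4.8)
The plan is to prove the three parts in order, deriving (2) and (3) formally from part (1) together with Corollary~\ref{cor:1_X_K}; the heart of the matter, and the main obstacle, is part (1), the diffeomorphism $X_K\cong X$. Throughout I note that a cork twist and a knot surgery each preserve $b_2^+$, so both $X$ and $\widetilde{S}_K$ inherit $b_2^+>1$ from $\widetilde{S}$ and the Seiberg-Witten comparisons below are meaningful.

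For part (1), I would exploit Corollary~\ref{cor:1_X_K}(2): $X$ splits as $M\#(S^2\times S^2)$ with the cusp neighborhood lying in $M$, disjoint from the $S^2\times S^2$ summand. Knot surgery only alters a tubular neighborhood $T\times D^2$ of the cusp torus, so it is performed entirely inside $M$. The key input is the standard principle that knot surgery along a torus sitting in a cusp neighborhood becomes trivial once the ambient manifold is stabilized by an $S^2\times S^2$ disjoint from the cusp: the two vanishing cycles of the cusp supply disks which, after a single stabilization, let one unknot the Fintushel-Stern gluing by handle slides over the $S^2\times S^2$ handles (cf. Fintushel-Stern, Akbulut). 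This yields an orientation-preserving diffeomorphism $\varphi\colon X_K\to X$. Since $X$ is simply connected with $b_2^+>1$ and contains an $S^2\times S^2$ summand, its Seiberg-Witten invariant vanishes, and hence so does that of $X_K$.

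For part (2), I would observe that the copy of $W_1$ is disjoint both from the cusp neighborhood (Corollary~\ref{cor:1_X_K}(1)) and from the $S^2\times S^2$ summand (Corollary~\ref{cor:1_X_K}(2)). All the modifications involved, namely the knot surgery and the handle moves realizing $\varphi$, are therefore supported away from $W_1$, so $\varphi$ may be taken to restrict to the identity on this copy of $W_1$ and to respect $f_1$. Combining this with Corollary~\ref{cor:1_X_K}(3), which states that $\widetilde{S}_K$ is the cork twist of $X_K$ along $(W_1,f_1)$, the diffeomorphism $\varphi$ carries this cork twist to the cork twist of $X$ along $(W_1,f_1)$. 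Hence $\widetilde{S}_K$ is obtained from $X$ by a cork twist along $(W_1,f_1)$ for every knot $K$.

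For part (3), the homeomorphism $\widetilde{S}_K\approx X$ is immediate from part (2), since a cork twist preserves the homeomorphism type. To see they are not diffeomorphic I would compare Seiberg-Witten invariants: by part (1) we have $SW_X=0$, whereas $\widetilde{S}$ is minimal symplectic with $b_2^+>1$, so $SW_{\widetilde{S}}\neq 0$ by Taubes, and the Fintushel-Stern knot surgery formula gives $SW_{\widetilde{S}_K}=SW_{\widetilde{S}}\cdot\Delta_K(t)$ with $t=\exp(2[T])$. Property~(3) of Proposition~\ref{prop: widetilde{S} of knotting corks} guarantees $[T]\neq 0$, so $t$ has infinite order in the torsion-free group ring $\mathbb{Z}[H_2(\widetilde{S}_K)]$; since $\Delta_K\neq 1$, the factor $\Delta_K(t)$ spreads $SW_{\widetilde{S}}$ over distinct classes with no cancellation, whence $SW_{\widetilde{S}_K}\neq 0$. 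Thus $\widetilde{S}_K$ and $X$ have different Seiberg-Witten invariants, so they are not diffeomorphic and $(W_1,f_1)$ is a cork of $X$. The one delicate point remains part (1): one must invoke the handlebody fact that the Fintushel-Stern operation is undone by a single $S^2\times S^2$ stabilization, and check that the resulting diffeomorphism is localized away from the cork so that it descends to part (2).
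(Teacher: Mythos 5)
Your parts (1) and (3) follow the paper's own route: part (1) is precisely the stabilization theorem of knot surgery (Akbulut~\cite{A4}, Auckly~\cite{Au}) applied to the splitting $X=M\#(S^2\times S^2)$ from Corollary~\ref{cor:1_X_K}(2), and part (3) is the comparison $SW_X=0$ versus $SW_{\widetilde{S}_K}\neq 0$ coming from the Fintushel--Stern formula~\cite{FS2}. However, your argument for part (2) contains a genuine error: the claim that the diffeomorphism $\varphi\colon X_K\to X$ ``may be taken to restrict to the identity on this copy of $W_1$.'' This is impossible whenever $\Delta_K\neq 1$. Indeed, if $\varphi|_{W_1}=\mathrm{id}$, then $\varphi$ restricts to a diffeomorphism $X_K-\mathrm{int}\,W_1\to X-\mathrm{int}\,W_1$ equal to the identity on $\partial W_1$, and gluing it with $\mathrm{id}_{W_1}$ across the $f_1$-twisted boundary identifications yields a diffeomorphism of the cork twists: $\widetilde{S}_K=(X_K-\mathrm{int}\,W_1)\cup_{f_1}W_1\cong(X-\mathrm{int}\,W_1)\cup_{f_1}W_1=\widetilde{S}$ (the last equality because $f_1$ is an involution and $X$ is by definition the twist of $\widetilde{S}$). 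This would make every $\widetilde{S}_K$ diffeomorphic to $\widetilde{S}$, contradicting your own part (3) and Theorem~\ref{th:knotting corks}; the same contradiction arises even if $\varphi$ only preserves the copy of $W_1$ setwise. Note also that you misquote Corollary~\ref{cor:1_X_K}(2): it asserts that the \emph{cusp neighborhood}, not $W_1$, is disjoint from the $S^2\times S^2$ summand, and no such disjointness for $W_1$ is available (the $S^2\times S^2$ summand emerges from the cork-twisted region). The failure of your localization claim is not a technicality --- it is exactly the phenomenon the paper is constructing: the stabilization diffeomorphism necessarily moves $W_1$ to a knotted position.

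The repair is immediate and is what the paper does. Statement (2) asserts only that $\widetilde{S}_K$ is obtained from $X$ by a cork twist along \emph{some} embedded copy of $(W_1,f_1)$. So transport the embedding rather than fix it: by Corollary~\ref{cor:1_X_K}(3), $\widetilde{S}_K$ is the cork twist of $X_K$ along its copy of $W_1$, and composing that embedding with any diffeomorphism $\varphi\colon X_K\to X$ from part (1) gives an embedding $W_1\hookrightarrow X$ whose cork twist is diffeomorphic to $\widetilde{S}_K$. This embedding depends on $K$ and is in general \emph{not} isotopic to the original one --- which is the content of the final sentence of Theorem~\ref{th:knotting corks}.
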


\begin{proof}The claim (1) follows from 
Corollary~\ref{cor:1_X_K}.(2), the definition of $X_K$ and the stabilization theorem of knot surgery by the first author~\cite{A4} and Auckly~\cite{Au}.  Corollary~\ref{cor:1_X_K}.(3) thus shows the claim (2). Since the Seiberg-Witten invariants of $\widetilde{S}_K$ does not vanish (Fintushel-Stern~\cite{FS2}), the claim (3) follows from the claim (1). 
\end{proof}
\begin{figure}[ht!]
\begin{center}
\includegraphics[width=3.4in]{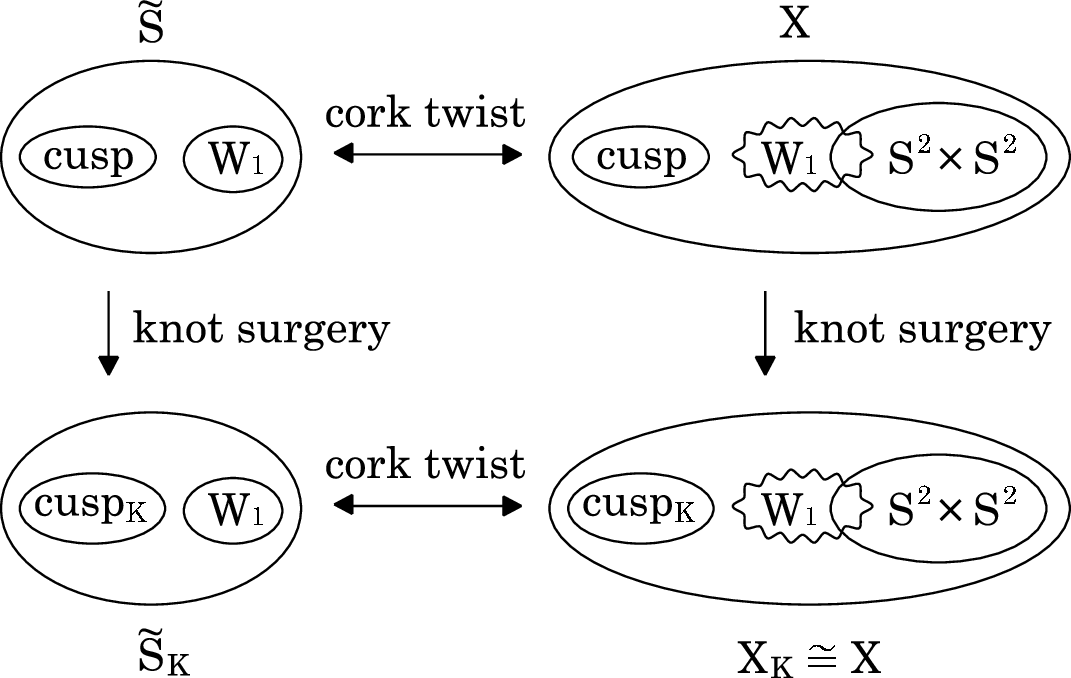}
\caption{relation between $\widetilde{S}, X, \widetilde{S}_K$ and $X_K$}
\label{fig4}
\end{center}
\end{figure}
Now we can easily prove Theorem~\ref{th:knotting corks}. 

\begin{proof}[Proof of Theorem~$\ref{th:knotting corks}$]
Let $X_0:=X$, and $K_n$ $(n\geq 1)$ be knots in $S^3$ with mutually different non-trivial Alexander polynomials. Define $X_n = \widetilde{S}_{K_n}$. Then the claim easily follows from Corollary~\ref{cor:2_X_K} and the Fintushel-Stern's formula (\cite{FS2}) of the Seiberg-Witten invariant of knot surgered manifolds. This gives a proof for the $(W_1,f_1)$ cork. Clearly the same argument holds for any cork of Mazur type. 
\end{proof}


\section{Rational blowdown}
In this section we review the rational blowdown introduced 
by Fintushel-Stern \cite{FS1}, and recall relations between rational blowdowns and corks.\medskip 

Let $C_p$ and $B_p$ be the smooth $4$-manifolds defined by handlebody diagrams in Figure~\ref{fig5}, and $u_1,\dots,u_{p-1}$ elements of $H_2(C_p;\mathbf{Z})$ given by corresponding $2$-handles in the figure such that $u_i\cdot u_{i+1}=+1$ $(1\leq i\leq p-2)$.
The boundary $\partial C_p$ of $C_p$ is diffeomorphic to the lens space $L(p^2,p-1)$, and also diffeomorphic to the boundary $\partial B_p$ of $B_p$. 
\begin{figure}[ht!]
\begin{center}
\includegraphics[width=3.8in]{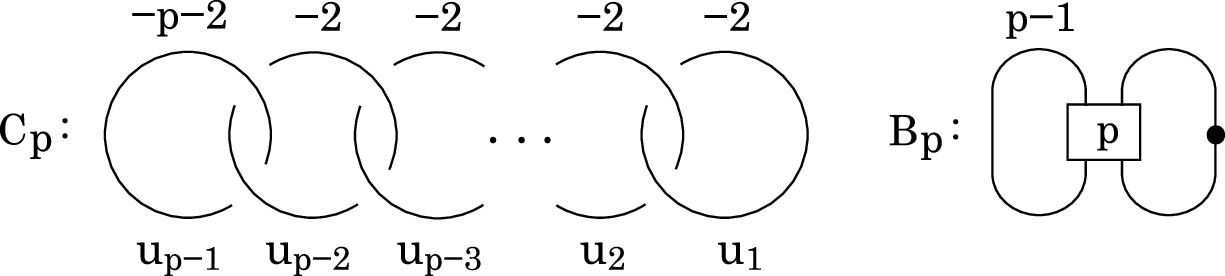}
\caption{}
\label{fig5}
\end{center}
\end{figure}

Suppose that $C_p$ embeds in a smooth $4$-manifold $Z$. 
Let $Z_{(p)}$ be the smooth $4$-manifold obtained from $Z$ by removing $C_p$ and gluing $B_p$ along the boundary. The smooth $4$-manifold $Z_{(p)}$ is called the rational blowdown of $Z$ along $C_p$. Note that $Z_{(p)}$ is uniquely determined up to diffeomorphism by a fixed pair $(Z,C_p)$ (see Fintushel-Stern~\cite{FS1}). 
This operation preserves $b_2^+$, decreases $b_2^-$, may create torsion in the first homology group. \medskip 

Rational blowdowns have the following relations with corks. 

\begin{theorem}[\cite{AY3}, see also \cite{AY1}]\label{th:cork and rbd}Let $D_p$ be the smooth $4$-manifold in Figure~\ref{fig6} (notice that $D_p$ is $C_p$ with two $2$-handles attached). 
Suppose that a smooth $4$-manifold $Z$ contains $D_{p}$. Let $Z_{(p)}$ be the rational blowdown of $Z$ along the copy of $C_p$ contained in $D_p$. 
Then the submanifold $D_p$ of $Z$ contains $W_{p-1}$ such that $Z_{(p)}\# (p-1)\overline{\mathbf{C}\mathbf{P}^2}$ is obtained from $Z$ by the cork twist along $(W_{p-1},f_{p-1})$.
\begin{figure}[ht!]
\begin{center}
\includegraphics[width=2.15in]{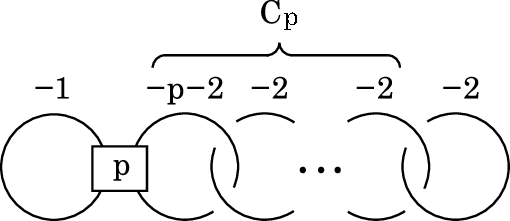}
\caption{$D_p$}
\label{fig6}
\end{center}
\end{figure}
\end{theorem}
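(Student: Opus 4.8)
```latex
\textbf{Proof proposal for Theorem~\ref{th:cork and rbd}.}
The plan is to analyze the handlebody diagram of $D_p$ in Figure~\ref{fig6} and trace through what happens to the copy of $C_p$ sitting inside it under the three relevant operations: the cork twist along $(W_{p-1},f_{p-1})$, the rational blowdown along $C_p$, and the blow-ups that account for the $(p-1)\overline{\mathbf{C}\mathbf{P}^2}$ summand. First I would identify $W_{p-1}$ as a subhandlebody of $D_p$: by the definition of $W_n$ (Figure~\ref{fig1}) and the fact that $D_p$ is $C_p$ with two extra $2$-handles, one should be able to see a dotted-circle/$0$-framed pair inside $D_p$ that realizes $W_{p-1}$, so that the cork twist $f_{p-1}$ (which exchanges the dot and the ``$0$'' in the symmetric link) makes sense on this piece. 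The central assertion is the diffeomorphism
\begin{equation*}
Z_{(p)}\#(p-1)\overline{\mathbf{C}\mathbf{P}^2}\;\cong\;(\text{cork twist of } Z \text{ along } (W_{p-1},f_{p-1})).
\end{equation*}

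The key computational step is a local handle-calculus argument carried out entirely inside a neighborhood of $D_p$, where both sides agree with $Z$ away from this neighborhood. On one side, I would perform the cork twist by replacing the dot with $0$ and vice versa in the $W_{p-1}$ diagram and then simplify the resulting diagram of the twisted $D_p$ by standard moves (handle slides, cancellation of a $1$-/$2$-handle pair, blow-down/blow-up of $\pm1$-framed unknots). On the other side, I would compute the rational blowdown $D_p \mapsto (D_p)_{(p)}$ by excising the chain $u_1,\dots,u_{p-1}$ realizing $C_p$ and gluing in $B_p$ (Figure~\ref{fig5}); the two leftover $2$-handles of $D_p$ survive this operation, and the arithmetic $p^2 \mapsto$ linear-plumbing data together with the sign conventions $u_i\cdot u_{i+1}=+1$ governs how the $\overline{\mathbf{C}\mathbf{P}^2}$ factors appear. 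The goal is to show both diagrams reduce to the \emph{same} handlebody after the dust settles, which gives the claimed diffeomorphism. Since this is asserted to follow the argument of \cite{AY3}, I expect the bulk of the verification to be a diagram-chase matching the two normal forms handle-by-handle.

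The main obstacle will be bookkeeping the $(p-1)$ copies of $\overline{\mathbf{C}\mathbf{P}^2}$ correctly: the rational blowdown of $C_p$ lowers $b_2^-$ by $p-1$ (it trades the linear chain of $p-1$ spheres for the rational homology ball $B_p$), and one must check that the cork twist, which is homologically trivial by itself, reproduces exactly this change only after connect-summing with $(p-1)\overline{\mathbf{C}\mathbf{P}^2}$ — i.e. the extra negative-definite summand is precisely what reconciles the differing intersection forms of the two constructions. Getting the number $p-1$, the framings, and the orientations of the blow-up spheres to line up, while ensuring all moves are supported in the neighborhood of $D_p$ so that the rest of $Z$ is untouched, is the delicate part. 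A useful sanity check throughout will be to verify that $b_2^+$ is preserved and that the first homology torsion created by the rational blowdown matches what the twisted-plus-blown-up diagram produces, since any discrepancy there would flag an error in the handle calculus.
```
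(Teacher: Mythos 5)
Your overall strategy---work entirely inside $D_p$, exhibit $W_{p-1}$ there, and establish a diffeomorphism rel boundary between the cork-twisted $D_p$ and $(D_p)_{(p)}\#(p-1)\overline{\mathbf{C}\mathbf{P}^2}$ so it can be glued to the identity on $Z-D_p$---is the strategy of the proof this paper cites from \cite{AY3}, and your insistence that all moves be supported in a neighborhood of $D_p$ is the right instinct. However, there is a genuine gap at the very first step. You claim that, since $D_p$ is $C_p$ with two $2$-handles attached, ``one should be able to see a dotted-circle/$0$-framed pair inside $D_p$ that realizes $W_{p-1}$.'' This is false as stated: the diagram of $D_p$ in Figure~\ref{fig6} consists of $2$-handles only (the linear chain of unknots giving $C_p$ plus the two extra $2$-handles) and contains no dotted circle whatsoever. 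Exhibiting $W_{p-1}$ as a codimension-zero submanifold requires \emph{creating} the $1$-handle by a nontrivial sequence of moves---introducing a cancelling $1$-handle/$2$-handle pair and performing specific slides until a sub-diagram becomes exactly Figure~\ref{fig1} with the remaining handles attached in its complement. That step is the heart of the cited proof, not something visible by inspection.

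The second gap is that the main identification is asserted rather than proved: ``show both diagrams reduce to the same handlebody after the dust settles'' is a restatement of the theorem. What actually constitutes the proof is the explicit chain of slides, cancellations, and interior blow-downs taking the dot/zero-exchanged diagram to the diagram of $B_p$ with the two extra $2$-handles plus $(p-1)$ split $-1$-framed unknots, together with the observation that these moves are supported in the interior, so the resulting diffeomorphism restricts to the identity on $\partial D_p$ and therefore extends by the identity over $Z-D_p$. (On the blowdown side you also implicitly use that $Z_{(p)}$ is independent of the gluing map, i.e.\ that every self-diffeomorphism of $L(p^2,p-1)$ extends over $B_p$; this should be stated.) Your homological bookkeeping of the $(p-1)$ blow-ups and the torsion check are consistent sanity checks, but as written the proposal defers the entire mathematical content to a ``diagram-chase'' that is never carried out, and mispredicts how its starting point would be found.
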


\section{Seiberg-Witten invariants}

\vspace{.05in} 

In this section, we briefly review basic facts about the Seiberg-Witten invariants. 
For more details, see, for example, Fintushel-Stern~\cite{FS3}, Gompf-Stipsicz~\cite{GS}, Ozbagci-Stipsicz~\cite{OS1}.\medskip 

\vspace{.05in} 

Suppose that $Z$ is a simply connected closed smooth $4$-manifold with $b_2^+(Z)>1$ and odd. Let $\mathcal{C}(Z)$ be the set of 
characteristic elements of $H^2(Z;\mathbf{Z})$. 
Then the Seiberg-Witten invariant $SW_{Z}: \mathcal{C}(Z)\to \mathbf{Z}$ is defined. 
Let $e(Z)$ and $\sigma(Z)$ be the Euler characteristic and the signature of $Z$, respectively, and $d_Z(K)$ the even integer defined by $d_Z(K)=\frac{1}{4}(K^2-2e(Z)-3\sigma(Z))$ 
for $K\in \mathcal{C}(Z)$. 
If $SW_Z(K)\neq 0$, then $K$ is called a Seiberg-Witten basic class of $Z$. It is known that if $K$ is a Seiberg-Witten basic class of $Z$, then $-K$ is also a Seiberg-Witten basic class of $Z$. We denote $\beta(Z)$ as the set of the Seiberg-Witten basic classes of $Z$.  The blow up formula is as follows:

\vspace{.05in} 

\begin{theorem}[Witten~\cite{W}, cf.\ Gompf-Stipsicz~\cite{GS}]\label{th:SW of E(n)} 
Suppose that $Z$ is a simply connected closed smooth $4$-manifold with $b_2^+(Z)>1$. If $\beta(Z)$ is not empty, then
\begin{equation*}
\beta(Z\#n\overline{\mathbf{C}\mathbf{P}^2})=\{K\pm E_1\pm E_2\pm \dots \pm E_n\mid K\in \beta(Z)\}. 
\end{equation*}
Here $E_1,E_2,\dots,E_n$ denotes the standard orthogonal basis of $H^2(n\overline{\mathbf{C}\mathbf{P}^2};\mathbf{Z})$ such that $E^2_i=-1$ $(1\leq i\leq n)$. 

\end{theorem}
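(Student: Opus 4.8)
The plan is to prove the formula by induction on $n$, reducing everything to the single blow-up case $Z\#\overline{\mathbf{C}\mathbf{P}^2}$; note that blowing up preserves $b_2^+$, so the hypothesis $b_2^+>1$ persists at every stage. First I would analyze the characteristic classes. Since $H^2(Z\#\overline{\mathbf{C}\mathbf{P}^2};\mathbf{Z})=H^2(Z;\mathbf{Z})\oplus \mathbf{Z}E$ orthogonally with $E^2=-1$, every element of $\mathcal{C}(Z\#\overline{\mathbf{C}\mathbf{P}^2})$ has the form $L=K+cE$ with $K\in\mathcal{C}(Z)$, and pairing $L$ against $E$ forces $c$ to be odd. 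Thus the candidate basic classes are automatically indexed by a class $K$ on $Z$ together with an odd coefficient $c$, and it remains to show that the Seiberg-Witten invariant of $Z\#\overline{\mathbf{C}\mathbf{P}^2}$ at $K+cE$ is nonzero only when $c=\pm1$, in which case it equals $SW_Z(K)$.

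Second, I would use the expected-dimension formula to restrict to $c=\pm1$. Blowing up sends $(e(Z),\sigma(Z))$ to $(e+1,\sigma-1)$ and $(K+cE)^2=K^2-c^2$, so a direct substitution into the definition of $d$ gives $d_{Z\#\overline{\mathbf{C}\mathbf{P}^2}}(K+cE)=d_Z(K)-\tfrac{c^2-1}{4}$. Since $c$ is odd, $\tfrac{c^2-1}{4}\geq 0$, so this expected dimension is at most $d_Z(K)$, with equality exactly when $c=\pm1$ and a strict drop for $|c|\geq3$. In the relevant situation where the moduli spaces are zero-dimensional, this already forces the invariant to vanish for $|c|\geq 3$ and isolates the two classes $K\pm E$.

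The analytic heart, which I expect to be the main obstacle, is the gluing (neck-stretching) argument identifying the solution count for $L=K\pm E$ on $Z\#\overline{\mathbf{C}\mathbf{P}^2}$ with that for $K$ on $Z$. One stretches the neck $S^3\times[0,T]$ of the connected sum and studies how the Seiberg-Witten solutions decompose as $T\to\infty$. On the $\overline{\mathbf{C}\mathbf{P}^2}$ side with its cylindrical end, the Spin$^c$ structure whose determinant restricts to $\pm E$ supports an essentially unique, transversally cut-out solution concentrated near the exceptional sphere; the gluing theorem then puts the moduli space for $Z\#\overline{\mathbf{C}\mathbf{P}^2}$ in bijection with that for $Z$. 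Making this rigorous requires the usual package: transversality of the moduli spaces, a compactness statement ruling out energy loss (bubbling) along the neck so that solutions converge to a broken configuration of the expected type, and careful bookkeeping to ensure the identification is genuine rather than merely up to finite ambiguity.

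Finally, fixing compatible homology orientations on $Z$ and $Z\#\overline{\mathbf{C}\mathbf{P}^2}$ so that the bijection of moduli spaces is orientation-preserving, the signed counts agree and $SW_{Z\#\overline{\mathbf{C}\mathbf{P}^2}}(K\pm E)=SW_Z(K)$. Combining this with the dimension restriction of the second step yields $\beta(Z\#\overline{\mathbf{C}\mathbf{P}^2})=\{K\pm E\mid K\in\beta(Z)\}$ for a single blow-up, and feeding this back into the induction produces the stated description with all independent sign choices $\pm E_1\pm E_2\pm\cdots\pm E_n$.
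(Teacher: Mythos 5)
The first thing to note is that the paper contains no proof of this statement to compare against: it is quoted as Witten's blow-up formula, with Gompf--Stipsicz cited as a reference, and is used as a black box. So your attempt must be judged as a standalone proof of a known theorem. Your overall architecture is the standard one --- reduce to a single blow-up, write every characteristic element of $Z\#\overline{\mathbf{C}\mathbf{P}^2}$ as $K+cE$ with $c$ odd, compute $d_{Z\#\overline{\mathbf{C}\mathbf{P}^2}}(K+cE)=d_Z(K)-\frac{c^2-1}{4}$, and identify moduli spaces by neck-stretching --- and the two algebraic steps are correct as stated.

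The genuine gap is your treatment of $|c|\geq 3$. You assert that the dimension drop ``already forces the invariant to vanish for $|c|\geq 3$,'' but that conclusion needs $d_Z(K)=0$ for every basic class $K$, i.e.\ that $Z$ is of simple type --- and simple type is not among the hypotheses (only $b_2^+(Z)>1$ and $\beta(Z)\neq\emptyset$ are assumed). Your parenthetical restriction to ``the relevant situation where the moduli spaces are zero-dimensional'' is exactly this unstated assumption. Recall that for $\beta(Z)$ to make sense when moduli spaces are positive-dimensional, $SW_Z$ is defined by cutting down with powers of the two-dimensional class $\mu$ (this is the convention behind the statement you are proving); with that definition, $d_Z(K)-\frac{c^2-1}{4}$ can be non-negative for $|c|\geq 3$, and nothing in your argument shows vanishing there. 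The issue is compounded by a misstatement in your gluing step: on the $\overline{\mathbf{C}\mathbf{P}^2}$ side, which carries a positive-scalar-curvature metric, the only solution is the \emph{reducible} one, not an irreducible solution ``concentrated near the exceptional sphere,'' and for $c=\pm(2l+1)$ with $l\geq 1$ this reducible is not transversally cut out: the cokernel of the coupled Dirac operator has complex rank $\frac{l(l+1)}{2}$ and produces an obstruction bundle whose Euler class is (up to sign) $\mu^{l(l+1)/2}$. Carrying out the obstructed gluing yields $SW_{Z\#\overline{\mathbf{C}\mathbf{P}^2}}(K+(2l+1)E)=\pm SW_Z(K)$ whenever the expected dimension is non-negative, so for a hypothetical non--simple-type $Z$ the classes $K\pm 3E$ would also be basic and the asserted set equality would fail. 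To close the gap you must either add the simple type hypothesis (harmless for this paper, whose applications of the theorem are to symplectic manifolds and manifolds derived from them, all of simple type), or perform the obstructed gluing for all odd $c$ and adjust the statement accordingly. For $|c|=1$ your plan is sound: the Dirac index on the $\overline{\mathbf{C}\mathbf{P}^2}$ end vanishes, the gluing is unobstructed, and $SW_{Z\#\overline{\mathbf{C}\mathbf{P}^2}}(K\pm E)=\pm SW_Z(K)$ gives both inclusions of the set equality once the $|c|\geq 3$ classes are disposed of.
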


If every Seiberg-Witten basic class $K$ of $Z$ satisfies $d_Z(K)=0$, then the $4$-manifold $Z$ is called of simple type. For example, it is known that every closed symplectic $4$-manifold with $b_2^+>1$ has non-vanishing Seiberg-Witten invariant and is of simple type. For such a 4-manifold, the following adjunction inequality holds:
\begin{theorem}[Ozsv\'{a}th-Szab\'{o}~\cite{OzSz}, cf.\ Ozbagci-Stipsicz~\cite{OS1}]
Suppose that $Z$ is a simply connected closed smooth $4$-manifold of simple type with $b_2^+(Z)>1$, and that $\Sigma \subset Z$ is a smoothly embedded, oriented, connected closed surface of genus $g>0$. Let $[\Sigma]$ be the second homology class of $Z$ represented by the embedded surface $\Sigma$. Then, for every Seiberg-Witten basic class $K$ of $X$, the following adjunction inequality  holds: 
\begin{equation*}
[\Sigma]^2+\lvert \langle K, [\Sigma]\rangle \rvert \leq 2g-2. 
\end{equation*}

\end{theorem}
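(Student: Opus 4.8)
The plan is to prove the inequality by a neck-stretching argument in Seiberg-Witten theory, isolating the case $[\Sigma]^2=0$ as the analytic core and reducing the remaining cases to it. The essential point I would exploit throughout is that, because $b_2^+(Z)>1$, the invariant $SW_Z$ is independent of the Riemannian metric and of the small perturbation used to define it (there is no wall-crossing). Hence, to show that a given characteristic class $K$ violating the bound satisfies $SW_Z(K)=0$, I am free to compute the moduli space of solutions using any metric I like, and I will choose one adapted to the surface $\Sigma$.

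First I would treat $[\Sigma]^2=0$. Here $\Sigma$ has a product tubular neighborhood $\Sigma\times D^2$ with boundary the $3$-manifold $Y=\Sigma\times S^1$. I would put on $Z$ a family of metrics containing an increasingly long cylindrical neck $Y\times[-T,T]$, while scaling the metric on $\Sigma$ so that its area is large. The two inputs are: (i) the a priori bound coming from the Weitzenb\"ock formula for the Seiberg-Witten equations, namely that the pointwise norm of the spinor is controlled by the negative part of the scalar curvature; and (ii) the curvature equation $F_A^+=\sigma(\phi)$, which relates the integral of the curvature of $A$ over $\Sigma$ to the pairing $\langle K,[\Sigma]\rangle$, since $K=c_1$ of the underlying spin${}^c$ structure. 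Combining (i) and (ii) with the Gauss-Bonnet control on $\int_\Sigma s\,dA$ for a genus-$g$ surface (where the Euler characteristic $2-2g$ enters) should produce, in the limit $T\to\infty$, the inequality $\lvert\langle K,[\Sigma]\rangle\rvert\le 2g-2$; equivalently, solutions cannot persist for the stretched metric once the bound is violated, so $SW_Z(K)=0$ and $K$ is not a basic class. This is exactly the $[\Sigma]^2=0$ instance of the adjunction inequality.

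Next I would reduce the case $[\Sigma]^2=n>0$ to the previous one by blowing up. Blowing up $Z$ at $n$ generic points of $\Sigma$ yields $Z\#n\overline{\mathbf{C}\mathbf{P}^2}$, which is again simply connected, of simple type, and has $b_2^+>1$; the proper transform $\widetilde\Sigma$ has the same genus $g$ but $[\widetilde\Sigma]^2=[\Sigma]^2-n=0$. By the blow-up formula (Theorem~\ref{th:SW of E(n)}) the class $\widetilde K=K+E_1+\dots+E_n$ is a basic class of the blow-up, and a direct computation using $\langle K,E_j\rangle=\langle E_i,[\Sigma]\rangle=0$ and $E_i^2=-1$ gives $\langle\widetilde K,[\widetilde\Sigma]\rangle=\langle K,[\Sigma]\rangle+n$. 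Applying the already-established $[\Sigma]^2=0$ inequality to $(\widetilde\Sigma,\widetilde K)$, and choosing the sign of $K$ so that $\langle K,[\Sigma]\rangle\ge 0$ (recall $-K$ is also basic), yields $[\Sigma]^2+\lvert\langle K,[\Sigma]\rangle\rvert\le 2g-2$, as required. The remaining case $[\Sigma]^2<0$ is the weakest form of the bound, and I would obtain it by a similar tubing/stabilization argument, the right-hand side only growing as the self-intersection decreases.

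I expect the main obstacle to be the analytic heart of the $[\Sigma]^2=0$ step: making the neck-stretching limit rigorous. This requires the compactness theory for Seiberg-Witten solutions on manifolds with cylindrical ends, control of bubbling, and the identification of the translation-invariant limiting solutions over $Y=\Sigma\times S^1$ with vortices on $\Sigma$, whose moduli space is non-empty only within the range dictated by $2g-2$. These estimates---essentially the content of the Kronheimer-Mrowka and Ozsv\'{a}th-Szab\'{o} analysis---are where all the real work lies; the topological reductions above are comparatively routine.
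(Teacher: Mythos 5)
First, note that the paper does not prove this statement at all: it is quoted as background (attributed to Ozsv\'{a}th-Szab\'{o}~\cite{OzSz}), so there is no internal proof to compare against, and your proposal has to stand on its own. Its first two steps do: the neck-stretching/Weitzenb\"ock argument for $[\Sigma]^2=0$ (this is the Kronheimer-Mrowka / Morgan-Szab\'{o}-Taubes theorem, valid without simple type), and the reduction of $[\Sigma]^2=n>0$ to the square-zero case by blowing up at $n$ points on $\Sigma$, taking the proper transform $[\widetilde\Sigma]=[\Sigma]-E_1-\dots-E_n$ and the basic class $\widetilde K=K+E_1+\dots+E_n$ from the blow-up formula, are both correct and standard.

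The genuine gap is your last sentence about $[\Sigma]^2<0$. That case is not ``the weakest form of the bound'' to be dispatched by a ``similar tubing/stabilization argument''; it is the entire content of the theorem as stated, the reason it carries Ozsv\'{a}th-Szab\'{o}'s name, and the only place the simple type hypothesis is needed --- a hypothesis your argument never invokes, which is the structural symptom of the gap. No operation available to you raises the self-intersection of an embedded surface while preserving its genus and the Seiberg-Witten data: blowing up \emph{lowers} the square, and summing with $\mathbf{C}\mathbf{P}^2$ (rather than $\overline{\mathbf{C}\mathbf{P}^2}$) kills the invariants outright, so there is no reduction of the negative-square case to the square-zero case. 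What Ozsv\'{a}th-Szab\'{o} actually do is prove an \emph{adjunction relation}: if a basic class $K$ violates the inequality against a surface of negative square, then $K+2\,\mathrm{PD}[\Sigma]$ (with appropriate sign) is again a basic class with the same value of the invariant. Iterating produces the classes $K+2k\,\mathrm{PD}[\Sigma]$, whose squares vary quadratically in $k$; since simple type forces every basic class to satisfy $K^2=2e(Z)+3\sigma(Z)$ (and there are only finitely many basic classes), this yields the contradiction. Without this mechanism --- or some substitute using simple type --- your proof establishes only the $[\Sigma]^2\geq 0$ cases, which is a strictly weaker theorem than the one stated.
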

%

\subsection{Seiberg-Witten invariants of rational blowdowns}
We here recall the change of the Seiberg-Witten invariants by rationally blowing down. Let $Z$ be a simply connected closed smooth $4$-manifold with $b_2^+(Z)>1$ and odd. Suppose that $Z$ contains a copy of $C_p$. Let $Z_{(p)}$ be the rational blowdown of $Z$ along the copy of $C_p$. Assume that $Z_{(p)}$ is simply connected. The following theorems are obtained by Fintushel-Stern~\cite{FS1}.

\begin{theorem}[Fintushel-Stern \cite{FS1}]\label{th:SW1}
For every element $K$ of $\mathcal{C}(Z_{(p)})$, there exists an element $\tilde{K}$ of $\mathcal{C}(Z)$ such that 
$K\rvert _{Z_{(p)}-B_{p}}=\tilde{K}\rvert _{Z-C_{p}}$ and 
$d_{Z_{(p)}}(K)=d_Z(\tilde{K})$. Such an element $\tilde{K}$ of $\mathcal{C}(Z)$ is called a lift of $K$.
\end{theorem}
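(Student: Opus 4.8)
The plan is to transfer the class through the common piece $N := Z\setminus\operatorname{int}C_p = Z_{(p)}\setminus\operatorname{int}B_p$, whose boundary is the lens space $L(p^2,p-1)$, and then to account for the change of moduli dimension by Euler-characteristic and signature bookkeeping.

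\textbf{Comparing the restriction maps.} First I would analyze the restrictions $H^2(Z)\to H^2(N)$ and $H^2(Z_{(p)})\to H^2(N)$ through the long exact sequences of the pairs $(Z,N)$ and $(Z_{(p)},N)$, using excision $H^\ast(Z,N)\cong H^\ast(C_p,\partial C_p)$, $H^\ast(Z_{(p)},N)\cong H^\ast(B_p,\partial B_p)$ and Lefschetz duality. Since $C_p$ is simply connected with $H_2(C_p;\mathbf{Z})=\mathbf{Z}^{\,p-1}$ while $B_p$ is a rational homology ball ($H_2(B_p;\mathbf{Z})=0$, $H_1(B_p;\mathbf{Z})=\mathbf{Z}/p$), this produces an exact sequence $\mathbf{Z}^{\,p-1}\xrightarrow{\ j\ }H^2(Z)\xrightarrow{\ r\ }H^2(N)\to 0$, so $r$ is onto with kernel spanned by $\operatorname{PD}(u_1),\dots,\operatorname{PD}(u_{p-1})$, whereas $H^2(Z_{(p)})\to H^2(N)$ is injective. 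Hence the restriction $K|_N$ of the given $K\in\mathcal{C}(Z_{(p)})$ always lifts to some $\tilde K_0\in H^2(Z)$, and two lifts differ by an element of $\operatorname{im}j=\langle\operatorname{PD}(u_1),\dots,\operatorname{PD}(u_{p-1})\rangle$.

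\textbf{Arranging the lift to be characteristic.} Because $H_2(Z)$ is generated (via Mayer--Vietoris) by the images of $H_2(N)$ and of the $u_i$, and the congruence $\langle\tilde K,u_i\rangle\equiv u_i\cdot u_i\pmod 2$ on the $u_i$ forces it on all their combinations, it suffices to correct the pairings with the $u_i$. Replacing $\tilde K_0$ by $\tilde K_0+\sum_k c_k\operatorname{PD}(u_k)$ changes $\langle\tilde K,u_i\rangle$ by $(Qc)_i$, where $Q=(u_i\cdot u_j)$ is the linear plumbing form of $C_p$, so I must solve $Qc\equiv b\pmod 2$ with $b_i=u_i\cdot u_i-\langle\tilde K_0,u_i\rangle$. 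This is solvable exactly when $b\perp\ker(Q\bmod 2)$: the diagonal $\operatorname{diag}(Q)$ always lies in $\operatorname{im}(Q\bmod 2)$ (for $v\in\ker(Q\bmod 2)$ one has $\sum_iQ_{ii}v_i=v^{T}Qv=0$), and comparing $\tilde K_0$ with the restriction of any fixed $W\in\mathcal{C}(Z)$ (which exists since $Z$ is smooth) shows $\langle\tilde K_0-W,\sum_iv_iu_i\rangle$ is even, because $K|_N$ and $W|_N$ are both characteristic on $N$ and hence differ by an even class. This step is therefore essentially formal, modulo the usual care with the torsion in $H_1(\partial C_p)=\mathbf{Z}/p^2$.

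\textbf{The dimension identity and the main obstacle.} It remains to match $d_Z(\tilde K)=d_{Z_{(p)}}(K)$. Additivity of the Euler characteristic and Novikov additivity of the signature, with $\sigma(C_p)=-(p-1)$, $\sigma(B_p)=0$, $e(C_p)=p$, $e(B_p)=1$, give $e(Z)-e(Z_{(p)})=p-1$ and $\sigma(Z_{(p)})-\sigma(Z)=p-1$; substituting into $d_Z(K)=\tfrac14(K^2-2e(Z)-3\sigma(Z))$ reduces the whole claim to the single squared-class identity $K^2=\tilde K^2+(p-1)$. Since $\tilde K|_N=K|_N$, the difference of squares is concentrated in the interior and is computed by pairing the plumbing form on $C_p$ against the contribution of $K$ on the rational ball $B_p$. \textbf{The main obstacle} is precisely this last point: the square is not constant over the fiber of characteristic lifts, so I must single out the lift whose restriction to $C_p$ realizes the correct (minimal-square) characteristic vector for the $\mathrm{Spin}^{c}$ structure that $K$ induces on $L(p^2,p-1)$, and then carry out the Fintushel--Stern computation verifying $K^2-\tilde K^2=p-1$ for that choice. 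Identifying the right extension over $C_p$ and checking the square formula from the explicit plumbing is where the real work lies; the homological lifting in the earlier steps is comparatively routine.
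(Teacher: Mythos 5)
The paper does not actually prove this statement; it is quoted directly from Fintushel--Stern \cite{FS1}, so your attempt can only be judged on its own terms. Judged that way, it is not a proof: you have done the routine part and explicitly left out the part that constitutes the actual content of the theorem. Your reduction is correct as far as it goes. The excision/Lefschetz duality argument showing $H^2(Z)\to H^2(N)$ is onto with kernel generated by the $\operatorname{PD}(u_i)$ while $H^2(Z_{(p)})\to H^2(N)$ is injective is right, and the Euler characteristic/Novikov additivity bookkeeping correctly converts $d_{Z_{(p)}}(K)=d_Z(\tilde K)$ into $\tilde K^2=K^2-(p-1)$, equivalently (splitting the rational intersection form along the rational homology sphere $L(p^2,p-1)$, with $B_p$ contributing nothing since $H^2(B_p;\mathbf{Q})=0$) into $(\tilde K|_{C_p})^2=1-p$. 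But at exactly this point you stop, writing that singling out the lift whose restriction to $C_p$ has the correct square "is where the real work lies." That work is the theorem. What must be proved is that the $\mathrm{Spin}^c$ structure induced by $K$ on $\partial B_p=L(p^2,p-1)$ --- which by hypothesis extends over the rational ball $B_p$, hence has $c_1$ of the restricted form $mp\in\mathbf{Z}_{p^2}$ with $m\equiv p-1\pmod 2$ --- admits a characteristic extension over the linear plumbing $C_p$ whose square is exactly $1-p$. This requires identifying which boundary $\mathrm{Spin}^c$ structures bound over $B_p$ and then an explicit computation with characteristic covectors of the plumbing lattice $(-(p+2),-2,\dots,-2)$; nothing in your write-up establishes either point, and the square genuinely varies over the fiber of characteristic lifts, as you yourself note. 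Without this, existence of a lift satisfying \emph{both} required conditions simultaneously is simply not shown.

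A secondary, repairable flaw: your characteristic-correction step relies on the claim that $H_2(Z)$ is generated by the images of $H_2(N)$ and the $u_i$. This is false in general --- the cokernel of $H_2(N)\oplus H_2(C_p)\to H_2(Z)$ injects into $H_1(L(p^2,p-1))\cong\mathbf{Z}_{p^2}$ and need not vanish, which matters mod $2$ when $p$ is even. The step itself is fine if argued via $w_2$: since $K|_N\equiv w_2(N)\pmod 2$, the class $\tilde K_0-w_2(Z)$ dies in $H^2(N;\mathbf{Z}/2)$, hence comes from $H^2(Z,N;\mathbf{Z}/2)\cong H_2(C_p;\mathbf{Z}/2)$, and any mod $2$ class there lifts integrally because $H_2(C_p;\mathbf{Z})$ is free; so one can correct $\tilde K_0$ by an integral combination of the $\operatorname{PD}(u_i)$ without changing its restriction to $N$. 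But fixing this still leaves the main gap above untouched.
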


\begin{theorem}[Fintushel-Stern \cite{FS1}]\label{th:SW2}
If an element $\tilde{K}$ of $\mathcal{C}(Z)$ is a lift of some element $K$ of $\mathcal{C}(Z_{(p)})$, then $SW_{Z_{(p)}}(K)=SW_{Z}(\tilde{K})$. 
\end{theorem}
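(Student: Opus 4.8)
The plan is to prove this by the standard neck-stretching and gluing argument for Seiberg-Witten invariants, following Fintushel-Stern~\cite{FS1}. The key geometric input is that $Z$ and $Z_{(p)}$ share a common piece $N := Z \setminus \mathrm{int}(C_p) = Z_{(p)} \setminus \mathrm{int}(B_p)$, whose boundary is the lens space $L = L(p^2, p-1)$, and that $L$ admits a metric of positive scalar curvature (PSC) since it is a quotient of $S^3$. The whole strategy is to localize each Seiberg-Witten count near this shared decomposition and show that the only discrepancy lives in the caps $C_p$ and $B_p$.

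First I would equip both $Z$ and $Z_{(p)}$ with Riemannian metrics containing a long cylindrical neck $L \times [-T,T]$ modeled on a PSC metric on $L$, and let $T \to \infty$. On the cylinder, the Weitzenb\"ock formula forces any finite-energy solution of the Seiberg-Witten equations to have its spinor decay to zero along the neck, so that solutions limit to reducible (flat) configurations on $L$. Since $L$ is a rational homology sphere carrying a PSC metric, for each $\mathrm{spin}^c$ structure there is a unique nondegenerate reducible limit on $L$. This rigidity is exactly what makes the gluing theory apply cleanly, and it removes any irreducible contributions coming from the neck.

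Next I would invoke the splitting/gluing theorem to express each invariant as a product of relative Seiberg-Witten invariants over the two pieces. Schematically, $SW_Z(\tilde{K})$ factors as the relative invariant of $N$ (for $\tilde{K}|_N$, with the prescribed reducible limit on $L$) times the relative invariant of the cap $C_p$ (for $\tilde{K}|_{C_p}$), and similarly $SW_{Z_{(p)}}(K)$ factors as the relative invariant of $N$ times the relative invariant of $B_p$. Because $\tilde{K}$ is a lift of $K$, the defining property (Theorem~\ref{th:SW1}) gives $\tilde{K}|_N = K|_N$, so the two relative invariants over $N$ are literally identical. Hence the desired equality $SW_{Z_{(p)}}(K) = SW_Z(\tilde{K})$ reduces to matching the cap contributions of $C_p$ and $B_p$.

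The hard part will be this final comparison of the two caps. Both $C_p$ and $B_p$ have $b_2^+ = 0$ and bound the same PSC lens space $L$, and the hypothesis $d_{Z_{(p)}}(K) = d_Z(\tilde{K})$ guarantees that the relevant moduli spaces have matching expected dimension (equal to zero in the cases that contribute). One must show that, for the $\mathrm{spin}^c$ structures arising as restrictions of lifts, the moduli space over each cap with the fixed reducible boundary limit is a single regular point counted with the same sign. This is the technical core of Fintushel and Stern's computation: the PSC boundary condition, together with the negative-definiteness of $C_p$ and the rational-ball structure of $B_p$, rigidifies the cap moduli spaces so that their relative invariants coincide. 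Granting this identification, combining the three steps yields $SW_{Z_{(p)}}(K) = SW_Z(\tilde{K})$.
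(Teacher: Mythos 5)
Note first that the paper contains no proof of this statement: it is quoted directly from Fintushel--Stern \cite{FS1} (as are Theorems~\ref{th:SW1} and \ref{th:SW3}), so the only thing to compare your proposal against is the original argument in \cite{FS1}. Your outline does reproduce the skeleton of that argument: decompose $Z=N\cup_L C_p$ and $Z_{(p)}=N\cup_L B_p$ along the lens space $L=L(p^2,p-1)$, stretch the neck using a positive scalar curvature metric on $L$, use the Weitzenb\"ock formula to force finite-energy solutions to limit to reducible flat configurations on $L$, and observe that the contributions over the common piece $N$ coincide because a lift satisfies $\tilde{K}|_{Z-C_p}=K|_{Z_{(p)}-B_p}$ by Theorem~\ref{th:SW1}.

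However, as a proof your text has a genuine gap, and you have in effect named it yourself: the assertion that each cap's moduli space, with the prescribed reducible limit, is a single regular point counted with the same sign is not a peripheral technicality --- it is the entire mathematical content of the theorem. The formal part of your argument (neck stretching, convergence to reducibles, identification over $N$) applies verbatim whenever one negative-definite cap bounding $L$ is exchanged for another, and by itself it yields no equality of invariants; what makes $C_p$ and $B_p$ interchangeable is the index computation on the cylindrical-end caps, which shows that exactly for the $\mathrm{spin}^c$ structures restricting as in Theorem~\ref{th:SW3} (i.e.\ with $(\tilde{K}|_{C_p})^2=1-p$ and the stated boundary restriction) the reducible on $C_p$ contributes a transversally cut out zero-dimensional point, matching the unique reducible on the rational ball $B_p$, while all other $\mathrm{spin}^c$ structures contribute nothing. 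Relatedly, the clean ``product of relative invariants'' you invoke is not available off the shelf; for a splitting along a PSC rational homology sphere it must be established by precisely the analysis you deferred (no irreducible limits, isolated nondegenerate reducibles, gluing as a bijection of zero-dimensional moduli spaces). So your plan is the right plan --- it is Fintushel and Stern's --- but the step you granted is where the theorem actually lives, and without it the argument proves nothing.
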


\begin{theorem}[Fintushel-Stern \cite{FS1}, cf.~Park \cite{P1}]\label{th:SW3}
If an element $\tilde{K}$ of $\mathcal{C}(Z)$ satisfies that $(\tilde{K}\rvert _{C_p})^2=1-p$ and 
$\tilde{K}\rvert _{\partial C_p}=mp\in \mathbf{Z}_{p^2}\cong H^2(\partial C_p;\mathbf{Z})$ 
with $m\equiv p-1\pmod 2$, then there exists an element $K$ of $\mathcal{C}(Z_{(p)})$ such that $\tilde{K}$ is a lift of $K$. 
\end{theorem}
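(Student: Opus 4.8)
The statement to prove is Theorem~\ref{th:SW3}, the Fintushel–Stern lifting criterion: given $\tilde K\in\mathcal C(Z)$ whose restriction to the configuration $C_p$ has square $1-p$ and whose restriction to $\partial C_p\cong L(p^2,p-1)$ equals the class $mp$ with $m\equiv p-1\pmod 2$, there is a characteristic element $K\in\mathcal C(Z_{(p)})$ of which $\tilde K$ is a lift.

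\medskip

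The plan is to build $K$ by hand from $\tilde K$ using the decompositions $Z=(Z\setminus C_p)\cup C_p$ and $Z_{(p)}=(Z\setminus C_p)\cup B_p$, which share the common boundary piece $\partial C_p=\partial B_p$. Since $\tilde K$ is already a cohomology class on $Z$, I would first restrict it to the open piece $Z\setminus C_p$; the definition of ``lift'' (Theorem~\ref{th:SW1}) requires precisely that $K$ and $\tilde K$ agree there, so the whole problem is to show that $\tilde K|_{Z\setminus C_p}$ extends over $B_p$ to a \emph{characteristic} class on $Z_{(p)}$ that matches along $\partial C_p$. I would set this up with the Mayer–Vietoris sequence for the pair $(Z\setminus C_p,\,B_p)$: a class on $Z_{(p)}$ restricting correctly to $Z\setminus C_p$ exists precisely when $\tilde K|_{\partial C_p}$ lies in the image of $H^2(B_p)\to H^2(\partial C_p)$ and the gluing is compatible. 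Here the hypothesis $\tilde K|_{\partial C_p}=mp\in\mathbf Z_{p^2}\cong H^2(\partial C_p;\mathbf Z)$ is what guarantees the class on the boundary is of the special form that bounds into $B_p$.

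\medskip

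The next step is the characteristic condition. A class on $Z_{(p)}$ is characteristic iff its restriction to $B_p$ is characteristic and it is characteristic on $Z\setminus C_p$; the latter follows from $\tilde K$ being characteristic on $Z$ and a comparison across the common boundary. The role of the two numerical hypotheses is to pin down the extension over $B_p$: the congruence $m\equiv p-1\pmod 2$ is exactly the parity needed so that the unique class on $B_p$ with the prescribed boundary value $mp$ is characteristic (the second homology of $B_p$ is generated by a surface whose self-intersection and the mod-$2$ behavior of $m$ must align with the characteristic condition). The condition $(\tilde K|_{C_p})^2=1-p$ plays the dual bookkeeping role: it certifies that on the $C_p$ side $\tilde K$ already restricts to the ``minimal'' characteristic class, so that the square and the dimension formula $d_{Z_{(p)}}(K)=d_Z(\tilde K)$ in Theorem~\ref{th:SW1} will come out correctly once $C_p$ (with $b_2^-=p-1$) is traded for $B_p$ (with $b_2=0$). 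I would verify the dimension identity directly from $d_Z(K)=\tfrac14(K^2-2e-3\sigma)$ by tracking the changes $e(Z_{(p)})=e(Z)-(p-1)$, $\sigma(Z_{(p)})=\sigma(Z)+(p-1)$, and $K^2-\tilde K^2=(K|_{B_p})^2-(\tilde K|_{C_p})^2=0-(1-p)=p-1$, and checking these cancel.

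\medskip

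The main obstacle I expect is the boundary matching over the torsion group $H^2(\partial C_p)\cong\mathbf Z_{p^2}$: one must show that the restriction maps from both $H^2(B_p)$ and $H^2(C_p)$ to this $\mathbf Z_{p^2}$ are understood explicitly enough to conclude that a class of the form $mp$ with the right parity uniquely extends over $B_p$ as a characteristic class. This is where the specific linking form and the generators $u_1,\dots,u_{p-1}$ of $H_2(C_p)$ enter, and it requires a careful computation of the intersection form of $B_p$ and its interaction with the self-linking of the lens space; the cited reference (Park~\cite{P1}) presumably handles exactly this lattice-theoretic core, which I would reproduce by computing the discriminant form of $C_p$ and matching it to the class $mp$. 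Everything else — extending over the open piece and verifying the square and dimension formulas — is formal once this arithmetic extension lemma is in place.
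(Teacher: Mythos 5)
You should first note that the paper itself gives no proof of this statement: Theorem~\ref{th:SW3} sits in the review section on Seiberg--Witten invariants and is quoted as a known result of Fintushel--Stern~\cite{FS1} (cf.~Park~\cite{P1}), so there is no in-paper argument to compare against. Judged on its own, your proposal has the right architecture --- glue along $Z_{(p)}=(Z\setminus C_p)\cup_{\partial}B_p$, extend $\tilde K|_{Z\setminus C_p}$ over $B_p$, then verify the characteristic condition and the dimension identity --- and your numerical bookkeeping is correct: $e(Z_{(p)})=e(Z)-(p-1)$, $\sigma(Z_{(p)})=\sigma(Z)+(p-1)$, and $K^2-\tilde K^2=p-1$ (since $b_2(B_p)=0$ and $(\tilde K|_{C_p})^2=1-p$) do cancel in $d_{Z_{(p)}}(K)=d_Z(\tilde K)$.

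However, there are two genuine gaps. First, the entire substance of the theorem is the arithmetic extension lemma you defer to the references: one must actually show that $H^2(B_p;\mathbf{Z})\cong\mathbf{Z}_p$ restricts onto $p\mathbf{Z}_{p^2}\subset H^2(\partial B_p;\mathbf{Z})\cong\mathbf{Z}_{p^2}$, and --- the real point --- that the classes $mp$ with $m\equiv p-1\pmod 2$ are exactly those arising as restrictions of first Chern classes of spin$^c$ structures on $B_p$; this is where the parity hypothesis lives, and without carrying out that computation (via the linking form of $L(p^2,p-1)$ and the handle description of $B_p$) you have a plan, not a proof, since as you yourself observe everything else is formal. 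Second, your criterion ``a class on $Z_{(p)}$ is characteristic iff its restrictions to $Z\setminus C_p$ and $B_p$ are characteristic'' is not justified as stated: by Mayer--Vietoris, the cokernel of $H_2(Z\setminus C_p;\mathbf{Z})\oplus H_2(B_p;\mathbf{Z})\to H_2(Z_{(p)};\mathbf{Z})$ can be nontrivial (it injects into $H_1(\partial C_p;\mathbf{Z})\cong\mathbf{Z}_{p^2}$), so checking the congruence $\langle K,x\rangle\equiv x\cdot x \pmod 2$ only on classes supported in the two pieces does not suffice. The standard repair is to work with spin$^c$ structures rather than characteristic classes: restrict the spin$^c$ structure of $\tilde K$ to $Z\setminus C_p$, use the extension lemma to find a spin$^c$ structure on $B_p$ agreeing with it on the boundary, and glue; the first Chern class of the glued spin$^c$ structure is then automatically characteristic, which closes both gaps simultaneously.
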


\begin{corollary}\label{cor:SW4}
If an element $\tilde{K}$ of $\mathcal{C}(Z)$ satisfies $\tilde{K}(u_1)=\tilde{K}(u_2)=\dots=\tilde{K}(u_{p-2})=0$ and $\tilde{K}(u_{p-1})=\pm p$, then $\tilde{K}$ is a lift of some element $K$ of $\mathcal{C}(Z_{(p)})$. 
\end{corollary}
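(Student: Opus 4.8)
The plan is to deduce the corollary directly from Theorem~\ref{th:SW3}, by checking that $\tilde K$ satisfies its two hypotheses: that $(\tilde K|_{C_p})^2 = 1-p$, and that $\tilde K|_{\partial C_p} = mp$ with $m \equiv p-1 \pmod 2$. Write $a = (a_1,\dots,a_{p-1})$ with $a_i = \tilde K(u_i)$, so that $a = (0,\dots,0,\pm p)$ by hypothesis, and let $Q = (u_i\cdot u_j)$ be the intersection form of $C_p$. With the labeling of Figure~\ref{fig5}, in which $u_{p-1}$ is the $-(p+2)$-sphere and $u_1,\dots,u_{p-2}$ are $-2$-spheres, $Q$ is the $(p-1)\times(p-1)$ tridiagonal matrix with diagonal $(-2,\dots,-2,-(p+2))$ and all off-diagonal entries $1$. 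Since $C_p$ is simply connected, $\tilde K|_{C_p}\in H^2(C_p;\mathbf{Z})\cong\operatorname{Hom}(H_2(C_p;\mathbf{Z}),\mathbf{Z})$ is determined by $a$, and its square is the rational number $a^{T}Q^{-1}a$.

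First I would evaluate the square. As $a=\pm p\,e_{p-1}$, we have $a^{T}Q^{-1}a = p^{2}(Q^{-1})_{p-1,p-1}$, and Cramer's rule identifies $(Q^{-1})_{p-1,p-1}$ with $\det(M)/\det Q$, where $M$ is the all-$(-2)$ tridiagonal $(p-2)\times(p-2)$ matrix obtained by deleting the last row and column of $Q$. The standard evaluations $\det M=(-1)^{p-2}(p-1)$ and $\det Q=(-1)^{p+1}p^{2}$ (the latter consistent with $|H_1(\partial C_p)| = |H_1(L(p^2,p-1))| = p^2$) then give $a^{T}Q^{-1}a = -(p-1)=1-p$, which is the first hypothesis.

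Next I would compute the boundary restriction. Under the identification $H^2(\partial C_p;\mathbf{Z})\cong\operatorname{coker}(Q)=\mathbf{Z}^{p-1}/Q\mathbf{Z}^{p-1}$, the class $\tilde K|_{\partial C_p}$ is represented by $a$. The columns of $Q$ give the relations, and the $-2$-chain relations $-2e_1+e_2=0$ and $e_{i-1}-2e_i+e_{i+1}=0$ $(2\le i\le p-2)$ inductively yield $e_i=i\,e_1$; the final relation $e_{p-2}-(p+2)e_{p-1}=0$ then collapses to $-p^2 e_1=0$. Hence $\operatorname{coker}(Q)\cong\mathbf{Z}_{p^2}$ is generated by $e_1$ with $e_{p-1}=(p-1)e_1$, so $\tilde K|_{\partial C_p}=\pm p\,e_{p-1}=\pm p(p-1)e_1 = mp$ with $m=\pm(p-1)$. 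Since $-(p-1)\equiv p-1\pmod 2$, this $m$ has the required parity, and Theorem~\ref{th:SW3} then furnishes an element $K\in\mathcal{C}(Z_{(p)})$ with $\tilde K$ a lift of $K$.

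The step I expect to be the main obstacle is the boundary computation, not the square: one must pin down the isomorphism $\operatorname{coker}(Q)\cong\mathbf{Z}_{p^2}\cong H^2(\partial C_p;\mathbf{Z})$ together with the generator against which Theorem~\ref{th:SW3} reads off the coefficient of $p$, and then verify both the divisibility of $\tilde K|_{\partial C_p}$ by $p$ and the parity of $m$. A secondary but genuine pitfall is the orientation- and labeling-sensitivity of the determinant computation: taking $u_1$ rather than $u_{p-1}$ to be the $-(p+2)$-sphere would produce $-p^2+p+1$ in place of $1-p$ for the square, so the convention of Figure~\ref{fig5} must be used throughout.
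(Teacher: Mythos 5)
Your proposal is correct and is exactly the argument the paper intends: the paper states this corollary without proof, as an immediate consequence of Theorem~\ref{th:SW3}, and your verification of its two hypotheses --- the computation $(\tilde K|_{C_p})^2 = p^2 (Q^{-1})_{p-1,p-1} = 1-p$ and the boundary restriction $\tilde K|_{\partial C_p} = \pm p(p-1)e_1 = mp$ with $m \equiv p-1 \pmod 2$ --- is accurate, including the labeling convention that $u_{p-1}$ is the $-(p+2)$-sphere (which is indeed the paper's convention, as seen from $e_i\cdot u^{(i)}_{p_i-1}=p_i$ in Corollary~\ref{cor:disjoint C_p}, where $u^{(i)}_{p_i-1}$ is the proper transform of the $(p_i+1,p_i)$ torus-knot handle after blowing up).
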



\section{Disjointly embedded corks}\label{sec:construction}
In this section we prove Theorem~\ref{th:disjoint corks} and Corollary~\ref{cor:involutions of corks} by using the embedding theorem of Stein 4-manifolds together with Finushel-Stern's rational blowdown.\medskip

\begin{definition} Let $\widetilde{D}_p$ $(p\geq 2)$ be the compact $4$-manifold with boundary in Figure~\ref{fig7} (i.e. the blow down of $D_p$). Note that the left most knot in the figure is a $(p+1,p)$ torus knot.  Define $\widetilde{D}(p_1,p_2,\dots,p_n)$ as the boundary sum of $\widetilde{D}_{p_1},\widetilde{D}_{p_2},\dots,\widetilde{D}_{p_n}$. We denote by $D(p_1,p_2,\dots,p_n)$ the boundary sum of $D_{p_1},D_{p_2},\dots,D_{p_n}$. 
\begin{figure}[ht!]
\begin{center}
\includegraphics[width=3.5in]{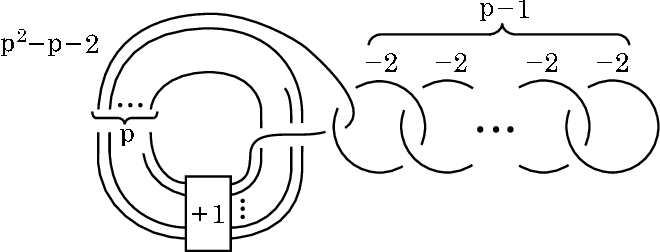}
\caption{$\widetilde{D}_p$}
\label{fig7}
\end{center}
\end{figure}
\end{definition}
\begin{figure}[ht!]
\begin{center}
\includegraphics[width=4.5in]{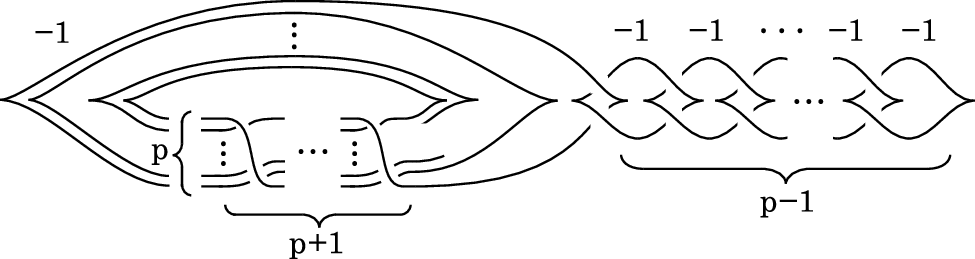}
\caption{Legendrian diagram of $\widetilde{D}_p$ with contact framings}
\label{fig8}
\end{center}
\end{figure}

\begin{proposition}\label{prop: S}
For each $n\geq 1$ and each $p_1,p_2,\dots,p_n\geq 2$, there exists a simply connected, minimal, closed, symplectic $4$-manifold $S(p_1,p_2,\dots,p_n)$ with the following properties: \smallskip 

$(1)$ $b_2^+(S(p_1,p_2,\dots,p_n))>1$; \smallskip 

$(2)$ The $4$-manifold $S(p_1,p_2,\dots,p_n)$ contains $\widetilde{D}(p_1,p_2,\dots,p_n)$;\smallskip 

$(3)$ Every Seiberg-Witten basic class $K$ of $S(p_1,p_2,\dots,p_n)$ satisfies $\langle K, \alpha \rangle=0$ for all $\alpha\in \iota_*H_2(\widetilde{D}(p_1,p_2,\dots,p_n);\mathbf{Z})$. Here $\iota_*$ denotes the homomophism induced by the inclusion $\iota: \widetilde{D}(p_1,p_2,\dots,p_n)\hookrightarrow  S(p_1,p_2,\dots,p_n)$.
\end{proposition}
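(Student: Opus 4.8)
The plan is to obtain $S(p_1,\dots,p_n)$ as a direct output of the embedding theorem and then to read off property~$(3)$ from the adjunction inequality for symplectic $4$-manifolds. First I would check that $\widetilde{D}(p_1,\dots,p_n)$ admits a Stein structure. Because a boundary sum of compact Stein $4$-manifolds is again Stein, it is enough to treat a single $\widetilde{D}_p$: the diagram of Figure~\ref{fig8} presents $\widetilde{D}_p$ as a handlebody whose $2$-handles are attached along Legendrian knots with their contact ($\mathrm{tb}-1$) framings, so Eliashberg's criterion~\cite{E1} shows that $\widetilde{D}_p$, and hence $\widetilde{D}(p_1,\dots,p_n)$, is a compact Stein $4$-manifold. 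Applying Theorem~\ref{th:closing of Stein} then produces a simply connected, minimal, closed, symplectic $4$-manifold $S(p_1,\dots,p_n)$ with $b_2^+>1$ that contains $\widetilde{D}(p_1,\dots,p_n)$; this is precisely $(1)$ and $(2)$.

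For $(3)$, recall that a closed symplectic $4$-manifold with $b_2^+>1$ is of simple type and has a non-empty set of basic classes, so the adjunction inequality of Ozsv\'{a}th--Szab\'{o}~\cite{OzSz} applies to every basic class $K$ of $S(p_1,\dots,p_n)$. It suffices to kill $\langle K,\iota_*\alpha\rangle$ for a generating set of $H_2(\widetilde{D}_p;\mathbf{Z})$, and the key generator is the class $t$ of the $2$-handle attached along the leftmost knot. The crucial numerical input is that this knot is the $(p+1,p)$ torus knot, for which $\mathrm{tb}_{\max}=2g-1$ with $g=\tfrac{p(p-1)}{2}$ its genus; the framing in Figure~\ref{fig7} realizes $\mathrm{tb}_{\max}-1=2g-2$, so capping a genus $g$ Seifert surface with the core of the $2$-handle yields a closed, oriented, genus $g$ surface $\Sigma_t$ representing $\iota_*t$ with $[\Sigma_t]^2=2g-2$. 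The adjunction inequality then reads $(2g-2)+\lvert\langle K,\iota_*t\rangle\rvert\le 2g-2$, forcing $\langle K,\iota_*t\rangle=0$.

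To reach the remaining generators I would exploit the fact that, apart from $t$, the classes of $\widetilde{D}_p$ are carried by a chain of $2$-spheres of self-intersection $-2$ that link $\Sigma_t$; such spheres cannot be used directly in the (genus $>0$) adjunction inequality. Instead I would tube these $-2$-spheres into $\Sigma_t$ one at a time, moving along the chain. A direct bookkeeping shows that tubing in a $-2$-sphere (at however many intersection points) leaves both the genus and the self-intersection obeying $[\Sigma]^2=2g(\Sigma)-2$, so the adjunction inequality stays sharp at every stage; it therefore forces $\langle K,\iota_*(t+u_{p-1})\rangle=0$, then $\langle K,\iota_*(t+u_{p-1}+u_{p-2})\rangle=0$, and so on. Combined with $\langle K,\iota_*t\rangle=0$ this gives $\langle K,\iota_*u_i\rangle=0$ for every chain generator $u_i$. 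Performing this in each boundary summand $\widetilde{D}_{p_i}$ independently (the summands occupy disjoint regions, so their classes are independent and the surfaces can be taken disjoint) establishes $(3)$ for all of $\iota_*H_2(\widetilde{D}(p_1,\dots,p_n);\mathbf{Z})$.

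The main obstacle is precisely this last step: the genus-zero configuration classes lie outside the scope of the adjunction inequality, so the argument rests entirely on the coincidence $\mathrm{tb}_{\max}=2g-1$ for the $(p+1,p)$ torus knot---which makes adjunction an equality on $\Sigma_t$---together with the observation that tubing in square $-2$ spheres preserves this equality. Carrying it out rigorously requires reading off from Figures~\ref{fig7} and~\ref{fig8} that every generator of $H_2(\widetilde{D}_p;\mathbf{Z})$ other than $t$ is represented by such a $-2$-sphere and that these spheres are linked to $\Sigma_t$ along a chain, so that the genus and self-intersection count in the tubing really keeps adjunction tight throughout.
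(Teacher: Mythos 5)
Your proposal is correct, but it takes a genuinely different route from the paper at the crucial point, namely property $(3)$. The paper does \emph{not} apply the embedding theorem to $\widetilde{D}(p_1,\dots,p_n)$ alone: before closing up, it attaches to each unknot of the Legendrian diagram of Figure~\ref{fig8} an additional $2$-handle along a contact $-1$-framed trefoil knot (Figure~\ref{fig9}), and embeds this \emph{enlarged} Stein domain into $S(p_1,\dots,p_n)$. Property $(3)$ is then deduced purely from genus-one adjunction: the trefoil class $v$ is a square-zero torus, a handle slide exhibits $u+v$ as a square-zero torus, so $\langle K,v\rangle=\langle K,u+v\rangle=0$ and hence $\langle K,u\rangle=0$ for each unknot class $u$, while the torus-knot class $w_i$ is killed exactly as in your argument. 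You instead embed $\widetilde{D}$ itself and dispose of the sphere classes by tubing the chain of $-2$-spheres into the genus-$g$ surface $\Sigma_t$, exploiting that adjunction is sharp on $\Sigma_t$ and remains sharp under the tubings. Both arguments work, and either construction of $S(p_1,\dots,p_n)$ suffices for the rest of the paper, since only properties $(1)$--$(3)$ are used later. The paper's trick is more local and robust: each unknot is handled by its own trefoil, with no need to know how the unknots sit relative to the torus knot or to each other. Your version is leaner (no auxiliary handles, a smaller Stein domain), but it depends on the chain configuration visible in Figure~\ref{fig7} and on each tubing occurring at intersection points of the correct sign. On that last point, one correction: your parenthetical claim that tubing in a $-2$-sphere preserves sharpness ``at however many intersection points'' holds only when all intersection points are \emph{positive}; if $k_-$ of them are negative, the resolved surface satisfies $[\Sigma']^2=2g(\Sigma')-2-4k_-$, and adjunction then only yields $\lvert\langle K,[\Sigma']\rangle\rvert\le 4k_-$. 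In your application each tubing is at a single positive transverse point (consecutive members of the chain have linking number one, realized geometrically by one point), so the argument goes through, but this positivity should be stated rather than swept into the bookkeeping.
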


\begin{proof}$(1)$ and $(2)$. 
Change the diagram of $\widetilde{D}(p_1,p_2,\dots,p_n)$ into the Legendrian diagram as in Figure~\ref{fig8}. Then, for each unknot, attach a $2$-handle along a contact $-1$-framed trefoil knot, as in Figure~\ref{fig9}. Since every 2-handle has contact $-1$-framings, this new handlebody is a compact Stein $4$-manifold. The embedding theorem of Stein $4$-manifolds thus gives a simply connected, minimal, closed, symplectic $4$-manifold $S(p_1,p_2,\dots,p_n)$ with $b_2^+(S(p_1,p_2,\dots,p_n))>1$ which contains this handlebody. The claims $(1)$ and $(2)$ hence follows.

$(3)$. Let $u$ (resp.\ $v$) be the element of $H_2(S(p_1,p_2,\dots,p_n);\mathbf{Z})$ 
 given by a contact $-1$-framed unknot (resp.\ a contact $-1$-framed trefoil knot), as in Figure~\ref{fig9}. We can easily check, by a handle slide, that $u+v$ is represented by a torus with the self-intersection number $0$. Let $K$ be a Seiberg-Witten basic class of $S(p_1,p_2,\dots,p_n)$. Since $v$ and $u+v$ are represented by tori with self-intersection numbers $0$, the adjunction inequality gives $\langle K, v \rangle=0$ and $\langle K, u+v \rangle=0$. We thus have $\langle K, u \rangle=0$. Let $w_i$ be the element of $H_2(S(p_1,p_2,\dots,p_n);\mathbf{Z})$ given by the contact $-1$-framed $(p_i+1,p_i)$ torus knot of $\widetilde{D}_{p_i}$. Note that $w_i$ is represented by a genus $\frac{p_i(p_i-1)}{2}$ surface with the self-intersection number $p^2_i-p_i-2$. We can also easily check, by the adjunction inequality, that $\langle K, w_i \rangle=0$ $(1\leq i\leq n)$.  Now $(3)$  follows from the fact that $\iota_*H_2(\widetilde{D}(p_1,p_2,\dots,p_n);\mathbf{Z})$ is generated by these classes $u$ and $w_i$. 
\end{proof}
\begin{figure}[ht!]
\begin{center}
\includegraphics[width=1.4in]{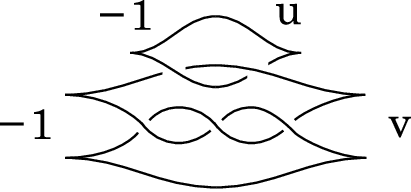}
\caption{}
\label{fig9}
\end{center}
\end{figure}

Let $e_1,e_2,\dots,e_n$ be the standard basis of $H_2(n\overline{\mathbf{C}\mathbf{P}^2};\mathbf{Z})$ such that $e_i^2=-1$ $(1\leq i\leq n)$ and $e_i\cdot e_j=0$ $(i\neq j)$. Then the proposition above together with the blow up formula immediately gives the following corollary.

\begin{corollary}\label{cor:disjoint C_p}
$(1)$ For each $p_1,p_2,\dots,p_n\geq 2$, the $4$-manifold 
$S(p_1,p_2,\dots,p_n)\# n\overline{\mathbf{C}\mathbf{P}^2}$ contains $D(p_1,p_2,\dots,p_n)$ as in Figure~\ref{fig10}. Here, $u^{(i)}_j$ $(1\leq i\leq n,\,\, 0\leq j\leq p_i-1)$ in the figure denotes the second homology class given by corresponding $2$-handle, and $e_i$ $(1\leq i\leq n)$ represents the homology class given by the corresponding $2$-handle. We orient $e_i$ and $u^{(i)}_j$ so that $e_i\cdot u^{(i)}_{p_i-1}=p_i$ and $u^{(i)}_j\cdot u^{(i)}_{j+1}=1$ $(1\leq i\leq n,\,\, 0\leq j\leq p_i-2)$. 
 
\noindent $(2)$ Every Seiberg-Witten basic class $K$ of $S(p_1,p_2,\dots,p_n)\# n\overline{\mathbf{C}\mathbf{P}^2}$ satisfies $\langle K, u^{(i)}_{p_i-1} \rangle =\langle K, -p_ie_i \rangle =\pm p_i$ and $\langle K, u^{(i)}_{j} \rangle=0$ $(1\leq i\leq n,\,\, 0\leq j\leq p_i-2)$. 
\begin{figure}[ht!]
\begin{center}
\includegraphics[width=2.4in]{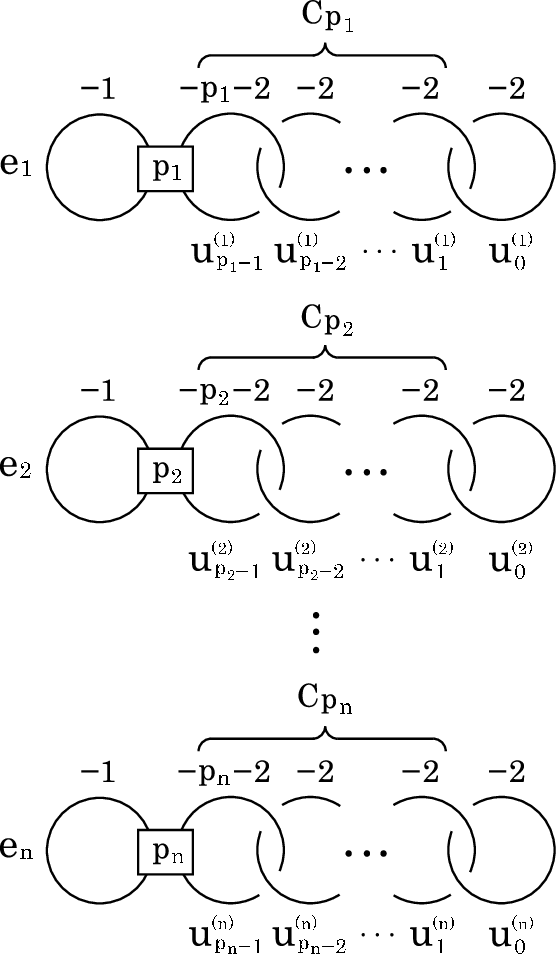}
\caption{the submanifold $D(p_1,p_2,\dots,p_n)$ of $S(p_1,p_2,\dots,p_n)\# n\overline{\mathbf{C}\mathbf{P}^2}$
}
\label{fig10}
\end{center}
\end{figure}
\end{corollary}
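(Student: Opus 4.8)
The plan is to deduce Corollary~\ref{cor:disjoint C_p} from Proposition~\ref{prop: S} essentially by bookkeeping, since the proposition has already done the analytic work (the adjunction-inequality argument forcing $\langle K,\alpha\rangle=0$ on $\iota_*H_2(\widetilde{D})$). First I would establish part $(1)$, the handle-level identification. Blowing up $S(p_1,\dots,p_n)$ $n$ times introduces the exceptional classes $e_1,\dots,e_n$. The key observation is that $\widetilde{D}_{p_i}$ is by definition the blow-down of $D_{p_i}$ (the $(p_i+1,p_i)$-torus-knot picture arises from blowing down the $+1$-framed unknot that links the chain), so conversely $D_{p_i}$ is recovered from $\widetilde{D}_{p_i}\subset S$ by blowing up once. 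I would verify by the standard handle-calculus move that performing the $i$-th blow-up in the region containing $\widetilde{D}_{p_i}$ converts it into $D_{p_i}$, with the exceptional sphere $e_i$ linking the last handle of the linear chain $p_i$ times; doing this simultaneously for all $i$ produces the boundary sum $D(p_1,\dots,p_n)$ sitting inside $S(p_1,\dots,p_n)\# n\overline{\mathbf{C}\mathbf{P}^2}$ exactly as drawn in Figure~\ref{fig10}. This fixes the orientations so that $e_i\cdot u^{(i)}_{p_i-1}=p_i$ and $u^{(i)}_j\cdot u^{(i)}_{j+1}=1$, which is just a choice of signs on the generators.

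Next I would prove part $(2)$, the constraint on basic classes. Let $K$ be a Seiberg--Witten basic class of $S(p_1,\dots,p_n)\# n\overline{\mathbf{C}\mathbf{P}^2}$. By the blow-up formula (Theorem~\ref{th:SW of E(n)}) we may write $K=K_0\pm E_1\pm\cdots\pm E_n$ where $K_0$ is a basic class of $S(p_1,\dots,p_n)$ and $E_1,\dots,E_n$ are the dual exceptional classes. Pairing with $e_i$: since $e_i$ is the homology class of the $i$-th exceptional sphere, $\langle K_0,e_i\rangle=0$ and $\langle E_j,e_i\rangle=-\delta_{ij}$ (with our sign conventions $E_i^2=-1$), so $\langle K, e_i\rangle=\pm1$, whence $\langle K,-p_ie_i\rangle=\mp p_i=\pm p_i$. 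This gives the $-p_ie_i$ half of the asserted equality. For the remaining classes I would express $u^{(i)}_{p_i-1}$ and $u^{(i)}_j$ in the blown-up homology in terms of the classes living in $S(p_1,\dots,p_n)$ together with $e_i$. The point is that the chain classes $u^{(i)}_0,\dots,u^{(i)}_{p_i-2}$ descend from classes in $\iota_*H_2(\widetilde{D})$ (on which $K_0$, hence $K$ after subtracting the $E_i$-contribution, vanishes by Proposition~\ref{prop: S}.(3)), while the last class $u^{(i)}_{p_i-1}$ differs from such a descended class precisely by $e_i$. Tracking these relations through the blow-up yields $\langle K,u^{(i)}_j\rangle=0$ for $0\le j\le p_i-2$ and $\langle K,u^{(i)}_{p_i-1}\rangle=\langle K,-p_ie_i\rangle=\pm p_i$, consistently with the first equality.

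The main obstacle I anticipate is not the Seiberg--Witten input but the homological bookkeeping across the blow-up: one must pin down exactly how each generator $u^{(i)}_j$ of $H_2(D_{p_i})$ is written in terms of the image of $H_2(\widetilde{D}_{p_i})$ and the exceptional class $e_i$, and verify that the vanishing supplied by Proposition~\ref{prop: S}.(3) on $\iota_*H_2(\widetilde{D})$ transfers correctly after adding back the $\pm E_i$ terms. Concretely, I would compute the intersection form of $D(p_1,\dots,p_n)$ from Figure~\ref{fig10} (a linear chain capped by the $\pm p_i$-linking exceptional sphere), identify the sublattice coming from $\widetilde{D}$, and check that the linear relations among the $u^{(i)}_j$, $e_i$, $u$ and $w_i$ are compatible with both $\langle K,u\rangle=0$, $\langle K,w_i\rangle=0$ and $\langle K,e_i\rangle=\pm1$. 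Once the change of basis between the $\widetilde{D}$-generators $\{u,w_i\}$ and the $D$-generators $\{u^{(i)}_j\}$ is made explicit, the two pairing computations collapse to the stated identities, and the proposition together with the blow-up formula delivers the corollary without further analysis.
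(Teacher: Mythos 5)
Your proposal is correct and takes essentially the same route as the paper, whose entire proof consists of the remark that Proposition~\ref{prop: S} together with the blow-up formula immediately gives the corollary: your expanded bookkeeping (one blow-up inside each $\widetilde{D}_{p_i}$ recovering $D_{p_i}$, then pairing the basic classes $K_0\pm E_1\pm\cdots\pm E_n$ against $e_i$, the chain classes descended from $\iota_*H_2(\widetilde{D})$, and the torus-knot class) is precisely what the paper leaves implicit. Only two small slips, both repaired by the intersection-form check you yourself outline: the unknot being blown down is $(-1)$-framed, not $(+1)$-framed, and $u^{(i)}_{p_i-1}$ differs from the descended torus-knot class $w_i$ by $p_ie_i$ rather than by $e_i$ (forced by $e_i\cdot u^{(i)}_{p_i-1}=p_i$ and $e_i^2=-1$), which is exactly what yields $\langle K,u^{(i)}_{p_i-1}\rangle=\langle K,-p_ie_i\rangle=\pm p_i$ instead of $\pm 1$.
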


\begin{definition}\label{def:disjoint cork}
$(1)$ Define $X_0:=S(p_1,p_2,\dots,p_n)\# n\overline{\mathbf{C}\mathbf{P}^2}$. Let $X_i'$ $(1\leq i\leq n)$ be the rational blowdown of $X_0$ along the copy of $C_{p_i}$ in Figure~\ref{fig10}. Put $X_i:=X_i'\# (p_i-1)\overline{\mathbf{C}\mathbf{P}^2}$.\medskip 

\noindent $(2)$ For $k_1,k_2,\dots,k_n\geq 1$, let $W(k_1,k_2,\dots,k_n)$ be the boundary sum $W_{k_1}\natural W_{k_2}\natural \cdots \natural W_{k_n}$. Figure~\ref{fig11} is a diagram of $W(k_1,k_2,\dots,k_n)$. Let $f^i(k_1,k_2,\dots,k_n)$ be the involution on the boundary $\partial W(k_1,k_2,\dots,k_n)$ obtained by replacing the dot and zero of the component of $W_{k_i}$.%
\begin{figure}[ht!]
\begin{center}
\includegraphics[width=4.5in]{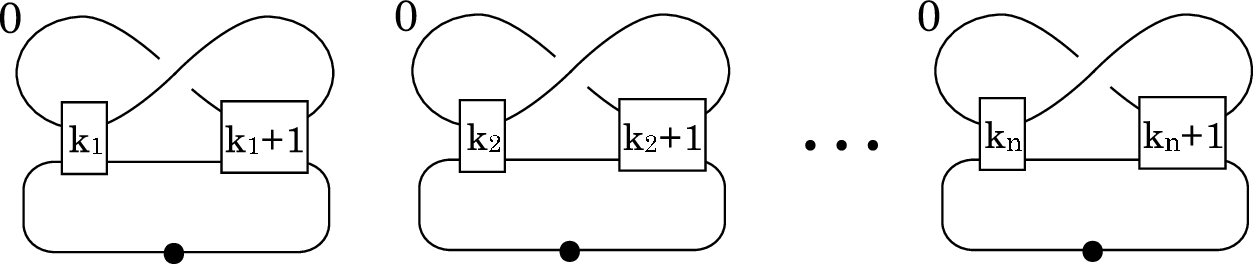}
\caption{$W(k_1,k_2,\dots,k_n)$}
\label{fig11}
\end{center}
\end{figure}
\end{definition}
One can easily prove the lemma below by checking Thurston-Bennequin numbers of 2-handles. For a proof, see \cite{AY3}.
\begin{lemma}[\cite{AY3}]
For each $k_1,k_2,\dots,k_n\geq 1$, the manifold $W(k_1,k_2,\dots,k_n)$ is a compact contractible Stein $4$-manifold.
\end{lemma}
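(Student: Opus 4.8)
The plan is to establish the two asserted properties of $W(k_1,k_2,\dots,k_n)=W_{k_1}\natural W_{k_2}\natural\cdots\natural W_{k_n}$ separately: contractibility, which is a soft homotopy-theoretic fact, and the existence of a Stein structure, which I would verify through Eliashberg's handlebody criterion, exactly as the phrase ``checking Thurston--Bennequin numbers of $2$-handles'' suggests.

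For contractibility I would use that a boundary connected sum deformation retracts onto the one-point union of its summands, so that $W(k_1,\dots,k_n)$ is homotopy equivalent to $W_{k_1}\vee\cdots\vee W_{k_n}$. Each $W_{k_i}$ is contractible by construction (it is the contractible manifold of Figure~\ref{fig1}), so this wedge is contractible, and therefore so is $W(k_1,\dots,k_n)$. For the Stein structure I would invoke Eliashberg's theorem (\cite{E1}, see also \cite{GS}, \cite{OS1}): a compact $4$-dimensional handlebody built from a single $0$-handle together with $1$- and $2$-handles carries a Stein structure provided it admits a handle diagram in which the $1$-handles are drawn in Legendrian (pair-of-balls) notation and each $2$-handle is attached along a Legendrian knot with framing equal to one less than its Thurston--Bennequin number. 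First I would put each building block $W_{k_i}$ into Legendrian position precisely as in the proof of Proposition~\ref{prop: widetilde{S} of knotting corks}, replacing the dotted circle by a pair of balls and isotoping the attaching circle of the $2$-handle (which runs through the $1$-handle) to a Legendrian representative; then I would compute its Thurston--Bennequin number through the $1$-handle and check that the prescribed framing is one less than this number. Since the diagram of $W(k_1,\dots,k_n)$ in Figure~\ref{fig11} is the split union of the diagrams of the individual $W_{k_i}$ over a common $0$-handle, these Thurston--Bennequin computations do not interact, so once each summand satisfies the framing condition the whole diagram does as well, and Eliashberg's criterion yields a Stein structure on $W(k_1,\dots,k_n)$.

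I expect the only genuine computational step --- and hence the main, though still routine, obstacle --- to be the Thurston--Bennequin count for the $2$-handle attaching circle as it passes through the $1$-handle of $W_{k_i}$, which must be done with the correct bookkeeping for arcs crossing the pair of balls. Conceptually the result is forced: each $W_{k_i}$ is already a compact contractible Stein $4$-manifold, being the underlying manifold of the cork $(W_{k_i},f_{k_i})$ of Theorem~\ref{th:cork}, and a boundary connected sum of Stein domains is again Stein, since the Legendrian diagrams simply sit side by side in $\partial B^4$ without interacting. Thus the verification via Figure~\ref{fig11} amounts to making this boundary-sum principle explicit at the level of Kirby diagrams.
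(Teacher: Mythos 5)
Your proposal is correct and follows essentially the same route as the paper, which proves the lemma precisely ``by checking Thurston--Bennequin numbers of $2$-handles'' (deferring the explicit Legendrian diagram to~\cite{AY3}): put Figure~\ref{fig11} in standard Legendrian form and apply Eliashberg's criterion, with contractibility coming from the boundary-sum/wedge observation. The only substantive step you defer --- the Legendrian isotopy and $tb$ count for each $W_{k_i}$ --- is exactly the step the paper also defers, so there is no gap beyond what the paper itself leaves to the reference.
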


\begin{proposition}\label{prop:X_i}
$(1)$ The $4$-manifold $X_0$ contains mutually disjoint copies of $W_{p_1-1}, W_{p_2-1}, \dots, W_{p_n-1}$ such that, for each $i$, the $4$-manifold $X_i$ is obtained from $X_0$ by the cork twist along $(W_{p_i-1}, f_{p_i-1})$.\medskip

$(2)$ The $4$-manifold $X_0$ contains a fixed copy of $W(p_1-1,p_2-1,\dots,p_n-1)$ such that, for each $i$, the $4$-manifold $X_i$ is obtained from $X_0$ by the cork twist along $(W(p_1-1,p_2-1,\dots,p_n-1),\, f^i(p_1-1,p_2-1,\dots,p_n-1))$.
\end{proposition}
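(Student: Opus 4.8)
The plan is to read off both statements from the rational-blowdown/cork dictionary of Theorem~\ref{th:cork and rbd}, applied one factor at a time to the boundary-sum decomposition furnished by Corollary~\ref{cor:disjoint C_p}.

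For $(1)$ I would argue as follows. By Corollary~\ref{cor:disjoint C_p}, $X_0$ contains $D(p_1,\dots,p_n)=D_{p_1}\natural\cdots\natural D_{p_n}$, and the summands $D_{p_i}$ are mutually disjoint in $X_0$ since they are assembled by a boundary sum (see Figure~\ref{fig10}). Theorem~\ref{th:cork and rbd} says that each $D_{p_i}$ contains a copy of $W_{p_i-1}$; these copies inherit the disjointness of the $D_{p_i}$. Applying Theorem~\ref{th:cork and rbd} with $Z=X_0$ and the relevant $C_p$ equal to $C_{p_i}$, the cork twist of $X_0$ along $(W_{p_i-1},f_{p_i-1})$ yields $(X_0)_{(p_i)}\#(p_i-1)\overline{\mathbf{C}\mathbf{P}^2}$, where $(X_0)_{(p_i)}$ denotes the rational blowdown of $X_0$ along $C_{p_i}$. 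By Definition~\ref{def:disjoint cork} we have $(X_0)_{(p_i)}=X_i'$ and $X_i=X_i'\#(p_i-1)\overline{\mathbf{C}\mathbf{P}^2}$, so this cork twist produces exactly $X_i$, giving $(1)$. Blowing down only $C_{p_i}$ leaves the other $C_{p_j}$ intact, which is consistent with the definition of $X_i'$ as the blowdown along $C_{p_i}$ alone.

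For $(2)$ I would upgrade the disjoint copies of part $(1)$ into a single embedded boundary sum. The boundary-sum structure of $D(p_1,\dots,p_n)$ restricts to bands joining the copies of $W_{p_i-1}$, producing an embedding of $W(p_1-1,\dots,p_n-1)=W_{p_1-1}\natural\cdots\natural W_{p_n-1}$ into $X_0$ whose $i$-th factor is the copy of $W_{p_i-1}$ used in part $(1)$. By its definition, the involution $f^i(p_1-1,\dots,p_n-1)$ replaces the dot and zero only in the $W_{p_i-1}$ component, so it equals $f_{p_i-1}$ on that factor and the identity on the remainder of $\partial W(p_1-1,\dots,p_n-1)$. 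Since regluing by the identity changes nothing, the cork twist of $X_0$ along $(W(p_1-1,\dots,p_n-1),f^i(p_1-1,\dots,p_n-1))$ coincides with the cork twist along $(W_{p_i-1},f_{p_i-1})$, which by part $(1)$ is $X_i$.

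The step I expect to require the most care is the localization used in $(2)$: one must verify that the boundary-connected-sum embedding can be chosen so that each factor is precisely the $W_{p_i-1}$ of part $(1)$ and, crucially, that each $f^i$ fixes the connected-sum spheres and the band regions of $\partial W(p_1-1,\dots,p_n-1)=\partial W_{p_1-1}\#\cdots\#\partial W_{p_n-1}$. This holds because $f_{p_i-1}$ is supported near the symmetric link of $W_{p_i-1}$ (the dot-and-zero region), away from where the boundary sum is performed. Once this support statement is in hand, the identification of the twist along the full boundary sum with the single-factor twist is immediate, and $(2)$ reduces to $(1)$.
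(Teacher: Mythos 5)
Your proposal is correct and takes essentially the same route as the paper: the paper's own proof of Proposition~\ref{prop:X_i} is a single sentence citing Corollary~\ref{cor:disjoint C_p} and Theorem~\ref{th:cork and rbd}, which is exactly the argument you have spelled out. Your expansion—including the observation that $f^i(p_1-1,\dots,p_n-1)$ is supported in the $W_{p_i-1}$ component away from the boundary-sum region, so twisting the full boundary sum by $f^i$ agrees with twisting the single factor by $f_{p_i-1}$—is precisely the detail the paper leaves implicit behind the word ``clearly.''
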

\begin{proof}
Corollary~\ref{cor:disjoint C_p} and Theorem~\ref{th:cork and rbd} clearly show the claims $(1)$ and (2). 
\end{proof}

\begin{remark}We here correct a misprint in \cite{AY3}. 
Proposition~5.4.(1) of \cite{AY3} should be changed as in Proposition~\ref{prop:X_i}.(1) of this paper. However, the claim itself of Proposition~5.4.(1) of \cite{AY3} is correct, because we can easily replace $(W_{p-1},f_{p-1})$ with $(W_{p},f_p)$ in Theorem~\ref{th:cork and rbd} of this paper. See the proof of the theorem given in~\cite{AY3}. 
\end{remark}

\subsection{Computation of SW invariants}In this subsection, we complete the proofs of Theorem~\ref{th:disjoint corks} and Corollary~\ref{cor:involutions of corks} by computing the Seiberg-Witten invariants of the $4$-manifolds $X_i$ $(0\leq i\leq n)$ in Definition~\ref{def:disjoint cork}.

\begin{lemma}\label{lem:restriction}Fix an integer $i\in\{1,2,\dots,n\}$. If Seiberg-Witten basic classes $K$ and $K'$ of $X_0$ satisfy $K\neq K'$, then restrictions $K|_{X_0-C_{p_i}}$ and ${K'}|_{X_0-C_{p_i}}$ are not equal to each other. 
\end{lemma}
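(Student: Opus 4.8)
The plan is to show that if two basic classes restrict identically to $X_0 \setminus C_{p_i}$, then they must be equal, which I will prove contrapositively by using the structure of $H_2$ coming from the rational blowdown configuration. First I would recall the setup: by Corollary~\ref{cor:disjoint C_p}, every Seiberg-Witten basic class $K$ of $X_0$ satisfies $\langle K, u^{(i)}_{p_i-1}\rangle = \pm p_i$ and $\langle K, u^{(i)}_j\rangle = 0$ for $0\le j\le p_i-2$. The classes $u^{(i)}_0,\dots,u^{(i)}_{p_i-1}$ are exactly the generators of $H_2(C_{p_i};\mathbf{Z})$, so a basic class is essentially ``rigid'' when paired against the homology carried inside the copy of $C_{p_i}$. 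The key point is that two basic classes that agree on $X_0 \setminus C_{p_i}$ can differ only by a class supported on $C_{p_i}$, and the pairing data above forces that difference to vanish.

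The central step is a Mayer--Vietoris / relative cohomology argument. Writing $X_0 = (X_0\setminus C_{p_i}) \cup C_{p_i}$ glued along $\partial C_{p_i} = L(p_i^2, p_i-1)$, I would consider the difference $\delta = K - K' \in H^2(X_0;\mathbf{Z})$. By hypothesis $\delta$ restricts to zero on $X_0 \setminus C_{p_i}$, so by exactness $\delta$ comes from the relative group $H^2(X_0, X_0\setminus C_{p_i};\mathbf{Z}) \cong H^2(C_{p_i}, \partial C_{p_i};\mathbf{Z})$ (excision). This relative group is generated (up to torsion coming from the lens space boundary) by classes dual to the $u^{(i)}_j$. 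Thus $\delta$ is determined by its pairings $\langle \delta, u^{(i)}_j\rangle$. But both $K$ and $K'$ satisfy the constraints of Corollary~\ref{cor:disjoint C_p}: $\langle K, u^{(i)}_j\rangle = \langle K', u^{(i)}_j\rangle = 0$ for $j \le p_i-2$, and $\langle K, u^{(i)}_{p_i-1}\rangle, \langle K', u^{(i)}_{p_i-1}\rangle \in \{+p_i, -p_i\}$. Hence $\langle \delta, u^{(i)}_j\rangle = 0$ for $j\le p_i-2$ and $\langle \delta, u^{(i)}_{p_i-1}\rangle \in \{0, \pm 2p_i\}$.

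The subtle part, and what I expect to be the main obstacle, is ruling out the case $\langle \delta, u^{(i)}_{p_i-1}\rangle = \pm 2p_i$, i.e.\ showing that $K$ and $K'$ cannot have opposite signs on $u^{(i)}_{p_i-1}$ while agreeing off $C_{p_i}$. The resolution is that the value on the last generator is not free: the intersection form on the $C_{p_i}$ chain is a chain of $-2$'s and a $-(p_i+2)$-sphere, and $\delta$ restricted to $C_{p_i}$ must be the difference of two \emph{characteristic} restrictions satisfying $(K|_{C_{p_i}})^2 = 1-p_i$ (the lift condition of Theorem~\ref{th:SW3}). I would argue that once $\langle\delta, u^{(i)}_j\rangle = 0$ for all $j \le p_i-2$ and $\delta$ vanishes on $X_0\setminus C_{p_i}$, the class $\delta$ is a multiple of the Poincar\'e dual of $u^{(i)}_{p_i-1}$ modulo the subspace orthogonal to the whole configuration; combined with the fact that $-p_i e_i$ and $u^{(i)}_{p_i-1}$ are linked by $e_i\cdot u^{(i)}_{p_i-1} = p_i$ and that $e_i$ lives outside $C_{p_i}$ (so $\langle\delta, e_i\rangle = 0$), the relation $\langle K, u^{(i)}_{p_i-1}\rangle = \langle K, -p_i e_i\rangle$ from Corollary~\ref{cor:disjoint C_p} propagates the vanishing. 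Concretely, since $e_i$ is disjoint from $C_{p_i}$ we get $\langle\delta, e_i\rangle = 0$, and then $\langle\delta, u^{(i)}_{p_i-1}\rangle = \langle\delta, -p_i e_i\rangle = 0$, forcing $\delta = 0$ and hence $K = K'$. This last observation—that the exceptional class $e_i$ records the value on $u^{(i)}_{p_i-1}$ from \emph{outside} $C_{p_i}$—is the crux that closes the argument.
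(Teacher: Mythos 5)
Your overall architecture --- decompose $X_0=C_{p_i}\cup(X_0-C_{p_i})$, consider the difference $\delta=K-K'$, split into cases according to whether the restrictions to $C_{p_i}$ agree, and use the constraints of Corollary~\ref{cor:disjoint C_p} to propagate the sign of $\langle K,u^{(i)}_{p_i-1}\rangle$ into the complement --- is the same as the paper's. The case $K|_{C_{p_i}}=K'|_{C_{p_i}}$ is also handled correctly: the subgroup $H_2(C_{p_i};\mathbf{Z})\oplus\iota_*H_2(X_0-C_{p_i};\mathbf{Z})$ has finite index in $H_2(X_0;\mathbf{Z})$, and $H^2(X_0;\mathbf{Z})$ is torsion free, so a class pairing to zero against that subgroup is zero. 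However, the step you yourself identify as the crux fails. You assert that ``$e_i$ lives outside $C_{p_i}$,'' so that $\delta|_{X_0-C_{p_i}}=0$ forces $\langle\delta,e_i\rangle=0$. This is false, and it is contradicted by the intersection number you quote in the same sentence: $e_i\cdot u^{(i)}_{p_i-1}=p_i\neq 0$. The classes $u^{(i)}_1,\dots,u^{(i)}_{p_i-1}$ generating $H_2(C_{p_i};\mathbf{Z})$ are represented by closed surfaces in the interior of $C_{p_i}$, so every class in the image of $\iota_*:H_2(X_0-C_{p_i};\mathbf{Z})\to H_2(X_0;\mathbf{Z})$ has zero intersection with $u^{(i)}_{p_i-1}$; hence $e_i$ is \emph{not} in that image, and the hypothesis $\delta|_{X_0-C_{p_i}}=0$ gives no control whatsoever on $\langle\delta,e_i\rangle$. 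Consequently you cannot exclude the dangerous case $\langle K,u^{(i)}_{p_i-1}\rangle=+p_i$, $\langle K',u^{(i)}_{p_i-1}\rangle=-p_i$; the lift condition $(K|_{C_{p_i}})^2=1-p_i$ of Theorem~\ref{th:SW3} does not help either, since it is symmetric under the sign flip.

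The paper closes exactly this gap by replacing $e_i$ with the corrected class
\[
\alpha \;=\; e_i+u^{(i)}_{p_i-1}+2u^{(i)}_{p_i-2}+\cdots+(p_i-1)u^{(i)}_{1}+p_iu^{(i)}_{0},
\]
which one checks satisfies $\alpha\cdot u^{(i)}_j=0$ for $1\le j\le p_i-1$; Lemma~5.1 of \cite{Y1} then upgrades this algebraic orthogonality to actual membership $\alpha\in\iota_*H_2(X_0-C_{p_i};\mathbf{Z})$ (this is a genuinely nontrivial input, not automatic). By Corollary~\ref{cor:disjoint C_p}, $\langle K,\alpha\rangle=(1-p_i)\langle K,e_i\rangle$, so $\alpha$ is a class supported in the complement that still records the sign of $\langle K,u^{(i)}_{p_i-1}\rangle=\langle K,-p_ie_i\rangle$. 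If $K$ and $K'$ take opposite signs on $u^{(i)}_{p_i-1}$, they therefore differ on $\alpha$, and hence their restrictions to $X_0-C_{p_i}$ differ. In short, your proof becomes the paper's proof once the unjustified statement ``$\langle\delta,e_i\rangle=0$'' is replaced by ``$\langle\delta,\alpha\rangle=0$''; constructing this complement-supported surrogate for $e_i$, and justifying that it lies in the complement, is the real content of the lemma and is the idea missing from your argument.
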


\begin{proof}Define an element $\alpha$ of $H_2(X_0;\mathbf{Z})$ by 
\begin{equation*}
\alpha=e_i+u^{(i)}_{p_i-1}+2u^{(i)}_{p_i-2}+3u^{(i)}_{p_i-3}+\dots+(p_i-1)u^{(i)}_{1}+p_iu^{(i)}_{0}.
\end{equation*}
We get $\langle K,\,\alpha \rangle =\langle K,\,(1-p_i)e_i\rangle$ and $\langle K',\,\alpha \rangle=\langle K',\,(1-p_i)e_i \rangle$, because Corollary~\ref{cor:disjoint C_p} gives $\langle K, u^{(i)}_{p_i-1} \rangle =\langle K, -p_ie_i \rangle$, $\langle K', u^{(i)}_{p_i-1} \rangle =\langle K', -p_ie_i \rangle$ and $\langle K,\,u^{(i)}_{j} \rangle = \langle K',\,u^{(i)}_{j} \rangle=0$ $(0\leq j\leq p_i-2)$ . 
Corollary~\ref{cor:disjoint C_p} implies 

\begin{equation*}
\alpha \cdot u^{(i)}_{1}=\alpha \cdot u^{(i)}_{2}=\dots=\alpha \cdot u^{(i)}_{p_i-1}=0. 
\end{equation*}
Since $u^{(i)}_0$ satisfies 
\begin{equation*}
u^{(i)}_0\cdot u^{(i)}_1=1\;\, \textnormal{and}\;\, u^{(i)}_0\cdot u^{(i)}_2=u^{(i)}_0\cdot u^{(i)}_3=\dots=u^{(i)}_0\cdot u^{(i)}_{p_i-1}=0, 
\end{equation*}
Lemma~5.1 of \cite{Y1} shows that $\alpha$ is an element of $\iota_*H_2(X_0-C_{p_i};\mathbf{Z})$, where $\iota_*$ is the homomorphism induced by the inclusion $\iota:X_0-C_{p_i}\hookrightarrow X_0$. Lemma~5.1 of \cite{Y1} also gives $H_1(X_0-C_{p_i};\mathbf{Z})=0$. Thus Mayer-Vietoris exact sequence for $C_{p_i}\cup (X_0-C_{p_i})=X_0$ is as follows: 
\begin{equation*}
0\to H_2(C_{p_i};\mathbf{Z})\oplus H_2(X_0-C_{p_i};\mathbf{Z})\rightarrow H_2(X_0;\mathbf{Z})\rightarrow \mathbf{Z}_{{p^2_i}}\to 0.
\end{equation*}

The case where $K|_{C_{p_i}}=K'|_{C_{p_i}}$: In this case, the exact sequence above together with the universal coefficient theorem for $X_0$ implies $K|_{X_0-C_{p_i}}\neq {K'}|_{X_0-C_{p_i}}$. 

The case where $K|_{C_{p_i}}\neq K'|_{C_{p_i}}$: In this case, we have $\langle K,\,u^{(i)}_{p_i-1} \rangle \neq \langle K',\,u^{(i)}_{p_i-1} \rangle$, because $\langle K,\,u^{(i)}_{j} \rangle = \langle K',\,u^{(i)}_{j} \rangle=0$ for $0\leq j\leq p_i-2$.  
We thus get $\langle K,\,-p_ie_i \rangle \neq \langle K',\,-p_ie_i \rangle$. This fact immediately gives $\langle K,\,\alpha \rangle \neq \langle K',\,\alpha \rangle$ and hence $K|_{X_0-C_{p_i}}\neq {K'}|_{X_0-C_{p_i}}$. 
\end{proof}
Though the rest of the proof is the same as that of Theorem~1.3 and 1.4 in \cite{AY3}, we proceed for the completeness. 
For a smooth $4$-manifold $Z$ we denote $N(Z)$ as the number of elements of $\beta(Z)$. 
\begin{lemma}\label{lem:N(X_i)}$N(X_i)=2^{p_i-1}N(X_0)$ $(1\leq i\leq n)$
\end{lemma}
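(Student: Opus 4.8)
The plan is to prove the formula $N(X_i)=2^{p_i-1}N(X_0)$ by relating the Seiberg-Witten basic classes of $X_i=X_i'\#(p_i-1)\overline{\mathbf{CP}}^2$ to those of $X_0$ through the rational blowdown correspondence and the blow-up formula. Recall that $X_i'$ is the rational blowdown of $X_0$ along the copy of $C_{p_i}$ sitting inside $D(p_1,\dots,p_n)$, so the factor $2^{p_i-1}$ should come precisely from the $p_i-1$ extra $\overline{\mathbf{CP}}^2$ summands reintroduced when we pass from $X_i'$ to $X_i$. Thus I would first establish $N(X_i')=N(X_0)$ and then apply Theorem~\ref{th:SW of E(n)} to get the factor $2^{p_i-1}$.

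First I would set up the bijection between $\beta(X_0)$ and $\beta(X_i')$. By Corollary~\ref{cor:disjoint C_p}.(2), every basic class $K$ of $X_0$ satisfies $\langle K,u^{(i)}_{p_i-1}\rangle=\langle K,-p_ie_i\rangle=\pm p_i$ and $\langle K,u^{(i)}_j\rangle=0$ for $0\le j\le p_i-2$. These are exactly the numerical conditions of Corollary~\ref{cor:SW4} (the hypotheses $\tilde K(u_j)=0$ for $1\le j\le p_i-2$ and $\tilde K(u_{p_i-1})=\pm p_i$), so each basic class $\tilde K:=K$ of $X_0$ is a lift of some characteristic element $\bar K$ of $X_i'$. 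By Theorem~\ref{th:SW2}, $SW_{X_i'}(\bar K)=SW_{X_0}(K)\ne 0$, so $\bar K\in\beta(X_i')$. Conversely, by Theorem~\ref{th:SW1} every basic class of $X_i'$ has a lift in $\mathcal{C}(X_0)$ with the same (nonzero, by Theorem~\ref{th:SW2}) Seiberg-Witten value, hence coming from $\beta(X_0)$. The key point making this a \emph{bijection} rather than merely a surjection is Lemma~\ref{lem:restriction}: distinct basic classes $K\ne K'$ of $X_0$ have distinct restrictions to $X_0-C_{p_i}$, and since a lift is determined by its restriction to the complement (the data $K|_{X_0-C_{p_i}}=\tilde K|_{X_0-C_{p_i}}$ in Theorem~\ref{th:SW1}), distinct basic classes of $X_0$ descend to distinct basic classes of $X_i'$. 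Therefore $N(X_i')=N(X_0)$.

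Next I would invoke the blow-up formula. Since $X_i'$ has nonempty $\beta(X_i')$ (it is nonempty because $N(X_i')=N(X_0)=N(S(p_1,\dots,p_n)\#n\overline{\mathbf{CP}}^2)>0$, the ambient manifold being built from a symplectic manifold with nonvanishing invariant via blow-ups), Theorem~\ref{th:SW of E(n)} applies to $X_i=X_i'\#(p_i-1)\overline{\mathbf{CP}}^2$ and yields
\begin{equation*}
\beta(X_i)=\{\bar K\pm E_1\pm\cdots\pm E_{p_i-1}\mid \bar K\in\beta(X_i')\}.
\end{equation*}
Each of the $p_i-1$ signs is an independent free choice and distinct sign vectors give distinct classes (they differ in the $E_j$-components), so each $\bar K\in\beta(X_i')$ produces exactly $2^{p_i-1}$ distinct elements of $\beta(X_i)$, and classes coming from different $\bar K$ remain distinct. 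Hence $N(X_i)=2^{p_i-1}N(X_i')=2^{p_i-1}N(X_0)$, as claimed.

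The step I expect to be the main obstacle is verifying carefully that the correspondence $K\mapsto\bar K$ is genuinely well-defined and injective, i.e.\ that Lemma~\ref{lem:restriction} combines correctly with the lift formalism of Theorems~\ref{th:SW1}--\ref{th:SW3}. One must check that ``lift'' is unambiguous enough that counting basic classes downstairs equals counting their lifts upstairs; this requires knowing that a basic class of $X_i'$ is recovered from (and determined by) its restriction to the common piece $X_0-C_{p_i}=X_i'-B_{p_i}$, and that no two distinct basic classes of $X_0$ can map to the same $\bar K$. Lemma~\ref{lem:restriction} is precisely what rules out the latter collision, so the argument hinges on pairing that lemma with Theorem~\ref{th:SW1} rather than on any new computation.
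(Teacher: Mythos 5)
Your proposal is correct and takes essentially the same route as the paper: every basic class of $X_0$ satisfies the hypotheses of Corollary~\ref{cor:SW4} by Corollary~\ref{cor:disjoint C_p}, Theorems~\ref{th:SW1} and~\ref{th:SW2} together with Lemma~\ref{lem:restriction} then give $N(X_i')=N(X_0)$, and the blow-up formula supplies the factor $2^{p_i-1}$. The one step you flag but leave unchecked --- that an element of $H^2(X_i';\mathbf{Z})$ is determined by its restriction to $X_i'-B_{p_i}$ --- is exactly what the paper verifies via the cohomology exact sequence of the pair $(X_i',X_i'-B_{p_i})$ (it holds since $H^2(X_i',X_i'-B_{p_i})\cong H^2(B_{p_i},\partial B_{p_i})=0$, $B_{p_i}$ being a rational homology ball), so your argument is complete once this one-line check is added.
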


\begin{proof}Corollary~\ref{cor:disjoint C_p}, Theorem~\ref{th:SW of E(n)} and Corollary~\ref{cor:SW4} guarantees that every Seiberg-Witten basic class of $X_0$ is a lift of some element of $\mathcal{C}(X_i')$. Lemma~\ref{lem:restriction} shows that these basic classes of $X_0$ have mutually different restrictions to $X_0-C_{p_i} (=X_i'-B_{p_i})$. Note that every element of $H^2(X_i';\mathbf{Z})$ is uniquely determined by its restriction to ${X_i'-B_{p_i}}$. (We can easily check this by using the cohomology exact sequence for the pair $(X_i', X_i'-B_{p_i})$.) Hence Theorems~\ref{th:SW1} and~\ref{th:SW2} give $N(X_i')=N(X_0)$. Now the required claim follows from the blow-up formula. 
\end{proof}

\begin{corollary}
If $p_1,p_2,\dots,p_n\geq 2$ are mutually different, then $X_i$ $(0\leq i\leq n)$ are mutually homeomorphic but not diffeomorphic.  
\end{corollary}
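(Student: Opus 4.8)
The plan is to separate the two assertions, since they rely on different machinery. The homeomorphism claim follows with essentially no extra work from Proposition~\ref{prop:X_i}: each $X_i$ $(1\le i\le n)$ is obtained from $X_0$ by a cork twist along $(W_{p_i-1},f_{p_i-1})$, and by the very definition of a cork twist together with Freedman's theorem, as recorded in Section~\ref{sec:cork}, such a twist does not change the homeomorphism type. Hence $X_i$ is homeomorphic to $X_0$ for every $i$, and therefore $X_0,X_1,\dots,X_n$ are mutually homeomorphic.

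For the smooth classification I would use the number of Seiberg-Witten basic classes $N(Z)=|\beta(Z)|$, which is a diffeomorphism invariant. First I would record that $N(X_0)>0$: the building block $S(p_1,\dots,p_n)$ is a closed symplectic $4$-manifold with $b_2^+>1$, so it has non-vanishing Seiberg-Witten invariant and hence $\beta(S(p_1,\dots,p_n))\neq\emptyset$; the blow-up formula (Theorem~\ref{th:SW of E(n)}) then gives $\beta(X_0)\neq\emptyset$, that is $N(X_0)\geq 1$. With this in hand, the key input is Lemma~\ref{lem:N(X_i)}, which yields $N(X_i)=2^{p_i-1}N(X_0)$ for $1\le i\le n$.

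The final step is a short counting argument. Since $p_1,\dots,p_n$ are mutually distinct and each $p_i\geq 2$, the integers $2^{p_i-1}$ are mutually distinct and every one of them is at least $2$. Writing $M:=N(X_0)>0$, we then have $N(X_0)=M$ and $N(X_i)=2^{p_i-1}M$, so the values $N(X_i)$ $(1\le i\le n)$ are pairwise distinct and each strictly exceeds $M=N(X_0)$. Therefore $N(X_0),N(X_1),\dots,N(X_n)$ are $n+1$ mutually distinct non-negative integers, and since $N$ is a diffeomorphism invariant the manifolds $X_0,X_1,\dots,X_n$ are mutually non-diffeomorphic.

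I expect the genuine difficulty to lie entirely upstream of this corollary, in Lemma~\ref{lem:N(X_i)} and the structural results it rests on: the disjoint embedding of the $C_{p_i}$, the basic-class computation of Corollary~\ref{cor:disjoint C_p}, and the ``distinct restrictions'' argument of Lemma~\ref{lem:restriction}. Granting those, the corollary itself reduces to the observation that distinct powers of $2$ give distinct values of $N$, so no serious obstacle remains at this final stage.
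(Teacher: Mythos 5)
Your proposal is correct and matches the paper's (implicit) argument: the paper states this corollary without proof as an immediate consequence of Lemma~\ref{lem:N(X_i)}, relying on exactly your counting argument, with the homeomorphism part coming from Proposition~\ref{prop:X_i} and the fact that cork twists preserve homeomorphism type. Your explicit verification that $N(X_0)>0$ (via the symplectic non-vanishing result and the blow-up formula) is a detail the paper leaves tacit, but it is the same route, not a different one.
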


\begin{proof}[Proof of Theorem~$\ref{th:disjoint corks}$ and Corollary~$\ref{cor:involutions of corks}$]
These clearly follow from the corollary above and Proposition~\ref{prop:X_i}.
\end{proof}
\begin{remark}If we appropriately choose $p_1,p_2,\dots,p_n$, then we can show that the natural combinations of cork twists of $X_0$ produce $2^{n}-1$ distinct smooth structures. In fact, similarly to Lemma~\ref{lem:N(X_i)}, we can show that the number of Seiberg-Witten basic classes of the combinations of cork twists of $X_0$ along $(W_{p_{i_1}-1}, f_{p_{i_1}-1})$, $(W_{p_{i_2}-1}, f_{p_{i_2}-1})$, $\cdots$, $(W_{p_{i_k}-1}, f_{p_{i_k}-1})$ is $2^{p_{i_1}+p_{i_2}+\dots+p_{i_k}-k}N(X_0)$. 
\end{remark}


\section{Infinitely many disjointly embedded knotted cork}\label{sec:Infinitely many disjointly knotted cork}
Here we prove Theorem~\ref{th:infinite corks} and Corollary~\ref{cor:finite exotic in S^4}. In the previous sections, we used the embedding theorem of Stein $4$-manifolds to construct examples. However, in this section, we use the embedding theorem to detect smooth structures (i.e. to evaluate the minimal genera of surfaces which represent second homology classes).  For simplicity, we first give a proof for the $(W_1,f_1)$ cork. The argument is easily modified for any cork of Mazur type. 

\begin{definition}$(1)$ Let $M_n$ and $N_n$ $(n\geq 2)$ be the compact smooth $4$-manifolds with boundary given in Figure~\ref{fig12}. Note that $N_n$ is the cork twist of $M_n$ along $(W_1,f_1)$. 

$(2)$ Let $X_0$ be the simply connected non-compact smooth $4$-manifold given by attaching infinitely many handles to the boundary of $D^3\times [0,\infty)$ as shown in Figure~\ref{fig13}, where the $[0,\infty)$ component is horizontal line, and $\partial D^3=\mathbf{R}^2\cup \{\text{one point}\}$. In other words, $X_0$ is the infinite boundary sum of $\{M_i\mid i\geq 2\}$. Let $X_n$ be the cork twist of $X_0$ along $(W_1,f_1)$, where $W_1$ is the one contained in the $M_{n+1}$ component. 
\end{definition}
\begin{figure}[ht!]
\begin{center}
\includegraphics[width=3.4in]{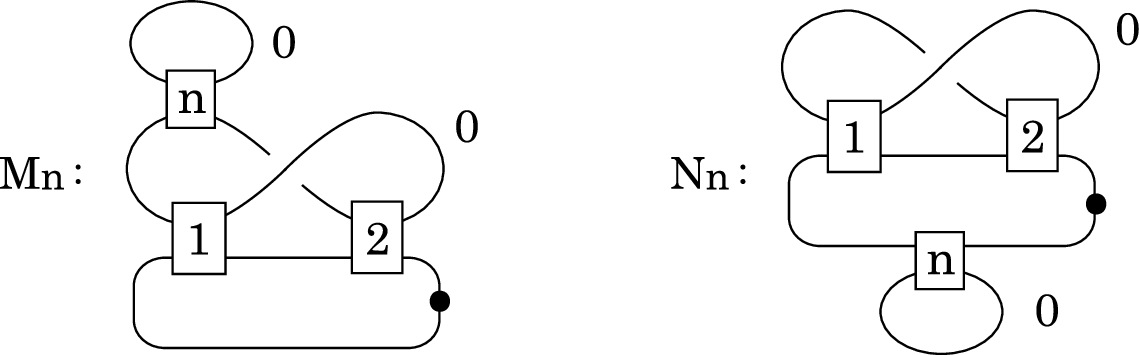}
\caption{}
\label{fig12}
\end{center}
\end{figure}
\begin{figure}[ht!]
\begin{center}
\includegraphics[width=4.4in]{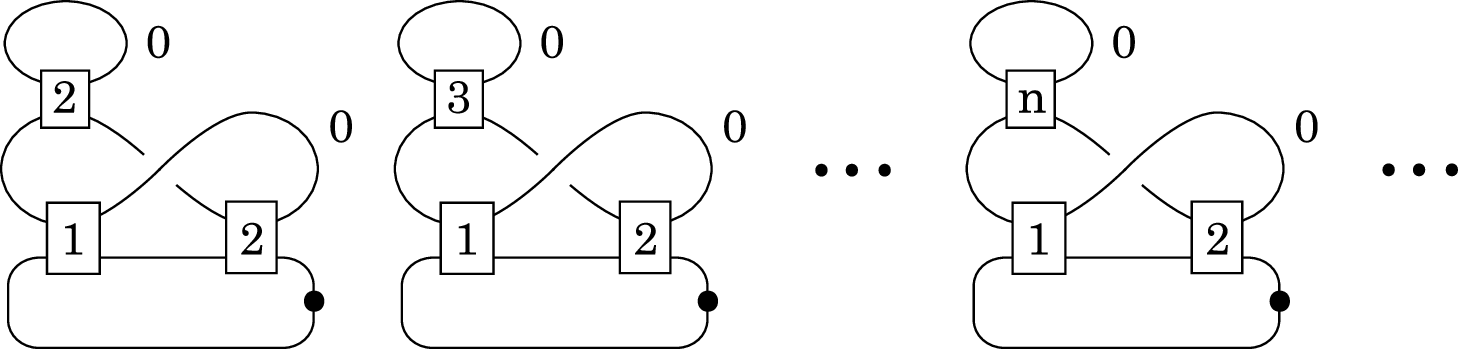}
\caption{$X_0$}
\label{fig13}
\end{center}
\end{figure}
\begin{lemma}\label{lem:genus and embedding of M_n}
$(1)$ The generator $\alpha$ of $H_2(N_n;\mathbf{Z})\cong \mathbf{Z}$ is represented by a smoothly embedded genus $n$ surface. 

$(2)$ For $n\geq 1$, the $4$-manifolds $M_n$ and $N_n$ can be embedded into the $4$-ball $D^4$. 
\end{lemma}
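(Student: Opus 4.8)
The plan is to prove the two statements of Lemma~\ref{lem:genus and embedding of M_n} by working directly with the handlebody diagrams in Figure~\ref{fig12}, since both assertions are fundamentally diagrammatic/geometric in nature and do not require any Seiberg-Witten machinery.

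For part $(1)$, I would first identify the generator $\alpha$ of $H_2(N_n;\mathbf{Z})\cong\mathbf{Z}$ with the core of the single $2$-handle of $N_n$ capped off by a Seifert surface in the boundary of the $1$-handlebody (the union of $S^1\times B^3$ pieces). Since $N_n$ is obtained from $M_n$ by the cork twist (replacing the dot and the $0$-framing), the $2$-handle in $N_n$ is attached along a knot whose Seifert genus governs the genus of the capping surface. The key step is to read off this knot from Figure~\ref{fig12} and compute (or bound) its genus; I expect the diagram to be arranged so that the attaching circle, after the twist, is a knot (or becomes unknotted in a controlled way in the $1$-handle region) whose minimal Seifert surface pushes into the handle to give an embedded genus $n$ surface representing $\alpha$. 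Concretely, I would isotope the $2$-handle's attaching curve into a standard position, exhibit an explicit Seifert surface of genus $n$, and push its interior into $N_n$; the self-intersection is irrelevant here since we only need the existence of \emph{some} embedded representative, and we want the genus to be exactly $n$ (the upper bound from an explicit surface suffices for the later application, which only needs a surface of controlled genus).

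For part $(2)$, the goal is to embed $M_n$ and $N_n$ into $D^4$. The natural strategy is to show that each of these manifolds is built from $D^4$ by adding a $1$-handle and a $2$-handle in a way that can be ``absorbed'': equivalently, to exhibit a handle decomposition in which the $1$-handle and $2$-handle form a canceling pair, or more generally to show the manifold embeds by finding the complementary handles. Since $M_n$ (and $N_n$) each consist of one $1$-handle and one $2$-handle, the most direct route is to verify from Figure~\ref{fig12} that the $2$-handle is attached along a knot that, together with the $1$-handle, can be slid and canceled so that the manifold is diffeomorphic to a submanifold of $S^4$ (and hence of $D^4$ after locating it away from a point). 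Alternatively, and perhaps more robustly, I would exhibit an embedding of the boundary $\partial M_n$ into $S^3$ bounding the correct pieces, or directly produce the complementary handlebody; the cleanest version is to show the $2$-handle attaching circle lies on the boundary of the $0$-framed unknotting region so that the whole object caps off inside $D^4$.

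The main obstacle I anticipate is part $(1)$: correctly identifying the knot type of the $2$-handle attaching circle in $N_n$ after the cork twist and exhibiting a genus-$n$ surface of \emph{exactly} the right genus, rather than merely bounding it. The cork twist interchanges the roles of the dotted circle and the $0$-framed circle, and tracking how the framing and the linking data transform through this surgery description is the delicate point; a careless reading of Figure~\ref{fig12} could yield the wrong genus. Part $(2)$ should be comparatively routine once a canceling-pair or complementary-handle picture is set up, since embeddability of such small handlebodies in $D^4$ typically reduces to an explicit handle slide. I would carry out part $(2)$ first (as it is more mechanical and fixes conventions), then return to part $(1)$ and construct the Seifert surface explicitly, verifying its genus by a direct count of the bands in the diagram.
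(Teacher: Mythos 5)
Your structural premise about the diagrams is wrong, and it undermines both halves of the plan. $M_n$ and $N_n$ do not consist of one $1$-handle and one $2$-handle: each contains the cork $W_1$ (a dotted circle together with a $0$-framed $2$-handle) \emph{plus} one additional $2$-handle carrying the homology (Figure~\ref{fig12}). This is forced on abstract grounds: a handlebody with a single $1$-handle and a single $2$-handle has $H_1\cong\mathbf{Z}/\ell$ and $H_2\cong\ker(\mathbf{Z}\stackrel{\ell}{\to}\mathbf{Z})$, where $\ell$ is the algebraic linking of the attaching circle with the dotted circle; so it can have $H_1=0$ or $H_2\cong\mathbf{Z}$, but never both. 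Here both are needed: $H_2(N_n)\cong\mathbf{Z}$ is in the statement, and simple connectivity of the $M_i$ is what makes $X_0$ simply connected.

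This misreading breaks part $(1)$ as you set it up: in $N_n$ the attaching circle of the extra (``lower'') $2$-handle $K$ links the dotted circle algebraically $n$ times, so it is not null-homologous in the boundary of the $1$-handlebody, bounds no Seifert surface there, and its core cannot be capped off to a closed surface at all. The paper's proof first slides $K$ over the $0$-framed $2$-handle of $W_1$ exactly $n$ times---this is what makes it represent the generator $\alpha$---and it is these $n$ slides, \emph{not} the knot type of the attaching circle, that produce the genus: after the slides the curve bounds a disk with $2n$ bands attached (a genus-$n$ surface) in the complement of the dotted circle (Figure~\ref{fig14}), which is then capped by the core. Your plan of ``computing the Seifert genus of the attaching knot'' has no mechanism that outputs the number $n$. (You are right, however, that only an upper bound is needed here; the lower bound is Lemma~\ref{lem:upper bound of genus of N_n}.) Likewise, your primary route for part $(2)$---canceling the $1$-handle against the $2$-handle so that the manifold ``is diffeomorphic to a submanifold of $S^4$''---cannot work: handle cancellation preserves diffeomorphism type, and $M_n$ is not a ball since $H_2(M_n)\neq 0$. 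What is needed is your vague fallback made precise, and it is what the paper does: attach \emph{new} handles to $\partial M_n$ (one $2$-handle and two $3$-handles), slide the upper-left handle over the new $2$-handle $n$ times, arrange the middle handle to meet the $1$-handle geometrically once and cancel, then cap the two remaining $2$-handles with the $3$-handles (Figures~\ref{fig15}, \ref{fig16}); this realizes the union as $D^4$ and hence exhibits $M_n$ (and similarly $N_n$) as a sub-handlebody of $D^4$.
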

\begin{proof}$(1)$ The lower 2-handle (call $K$) of $N_n$ gives the generator $\alpha$ after sliding over the upper 2-handle $n$ times. Thus all we have to check is to see that the knot $K$ bounds a genus $n$ surface in the interior of $W_1$ after sliding $n$ times. See Figure~\ref{fig14}. Introduce a 1-handle/2-handle pair and slide the upper $0$-framed unknot twice, then we get the second picture. An isotopy gives the third picture. We slide the knot $K$ over the $0$-framed unknot $n$-times so that $K$ does not link with the lower dotted circle. We get the fourth picture by ignoring two $2$-handles and isotopy. We can now easily see that $K$ bounds a genus $n$ surface by the standard argument (cf. Gompf-Stipsicz~\cite[Exercise.4.5.12.(b)]{GS}). (Check that $K$ is the boundary of $D^2$ with $2n$ bands attached.)

\begin{figure}[ht!]
\begin{center}
\includegraphics[width=3.9in]{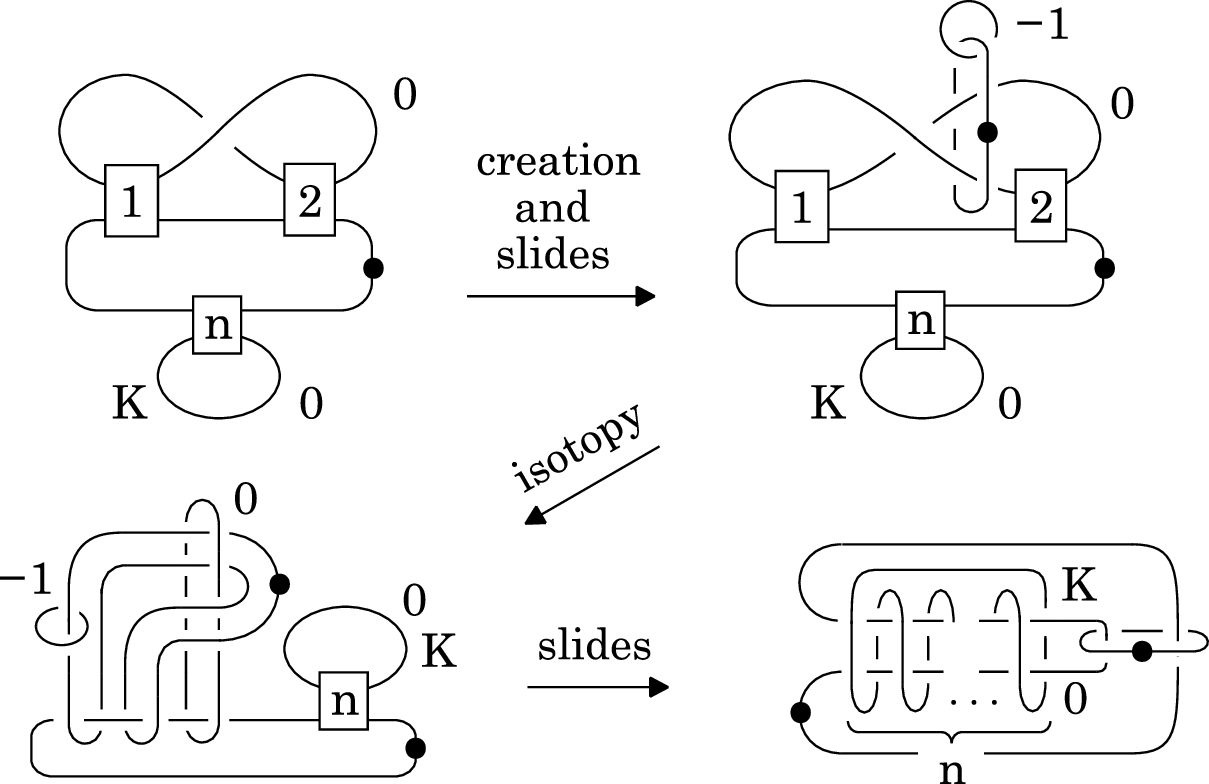}
\caption{}
\label{fig14}
\end{center}
\end{figure}

$(2)$ Attach a $2$-handle to the boundary of $M_n$ as in the first picture of Figure~\ref{fig15}. The second picture is given by sliding the upper left 2-handle over this new 2-handle $n$-times. Slide the middle 2-handle over its meridian as in the third diagram. Note that the middle 2-handle now links with the 1-handle geometrically once. Cancelling the 1-handle gives the last diagram. Attach two $3$-handles cancelling these two $2$-handles. So get an embedding of $M_n$ into $D^4$. The $N_n$ case is similar, Figure~\ref{fig16}.\end{proof}
\begin{figure}[ht!]
\begin{center}
\includegraphics[width=4.1in]{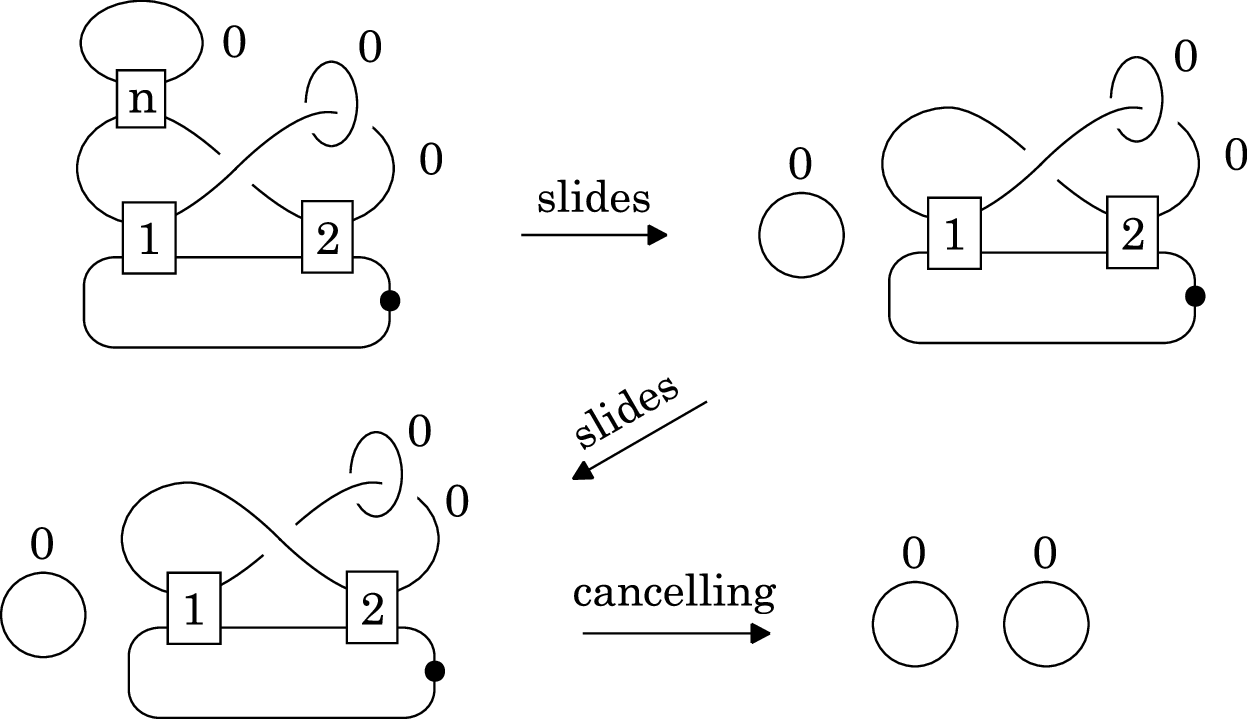}
\caption{}
\label{fig15}
\end{center}
\end{figure}\newpage
\begin{figure}[ht!]
\begin{center}
\includegraphics[width=3.1in]{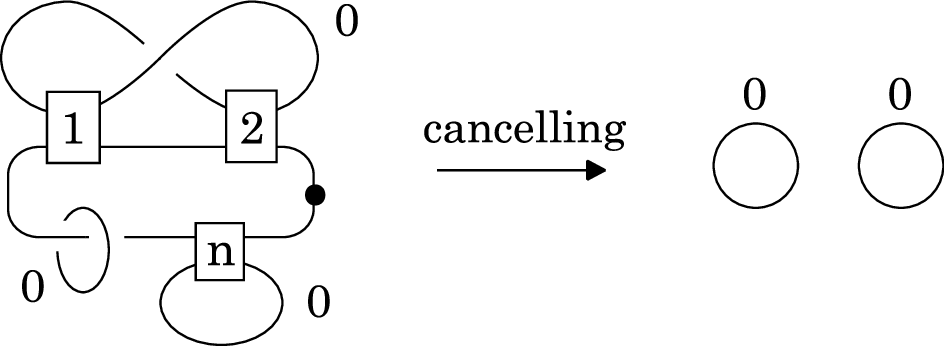}
\caption{}
\label{fig16}
\end{center}
\end{figure}


By using the embedding theorem of Stein $4$-manifolds, we show the following. 
\begin{lemma}\label{lem:upper bound of genus of N_n}
Let $\alpha$ be the generator of $H_2(N_n;\mathbf{Z})\cong \mathbf{Z}$. If $k\alpha$ is represented by a smoothly embedded surface with genus less than $n$, then $k=0$. 
\end{lemma}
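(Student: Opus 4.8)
The plan is to obtain a lower bound on the genus of any surface representing $k\alpha$ by embedding $N_n$ into a closed symplectic manifold and applying the adjunction inequality. First I would use Lemma~\ref{lem:genus and embedding of M_n}.(2), which gives an embedding of $N_n$ into $D^4$; complementarily, the relevant geometric input is that $N_n$ is (or can be made into) a compact Stein $4$-manifold, so that Theorem~\ref{th:closing of Stein} applies. Concretely, I would present a handlebody picture of $N_n$ as a Legendrian diagram in which every $2$-handle is attached with its contact $-1$ framing, exactly as in the constructions of Proposition~\ref{prop: widetilde{S} of knotting corks} and Proposition~\ref{prop: S}; this realizes $N_n$ as a compact Stein $4$-manifold, and Theorem~\ref{th:closing of Stein} then embeds it into a simply connected, minimal, closed, symplectic $4$-manifold $\widetilde{N}_n$ with $b_2^+(\widetilde{N}_n)>1$.

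Next I would locate the image of the generator $\alpha$ inside $\widetilde{N}_n$. The key structural feature of $N_n$, visible from Figure~\ref{fig12} and already exploited in Lemma~\ref{lem:genus and embedding of M_n}.(1), is that $\alpha$ is represented by the lower $2$-handle after sliding $n$ times over the upper $2$-handle. I would track the self-intersection number $\alpha^2$ and, more importantly, arrange that $\alpha$ pairs nontrivially with a Seiberg-Witten basic class of $\widetilde{N}_n$: since $\widetilde{N}_n$ is symplectic with $b_2^+>1$, it has a nonzero Seiberg-Witten basic class $K$, and the canonical class furnishes a candidate for which $\langle K,\alpha\rangle\neq 0$. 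The homological data ($\alpha^2$ and $\langle K,\alpha\rangle$) would be read off from the framings in the Stein diagram, just as the self-intersection and genus bookkeeping is carried out in the proof of Proposition~\ref{prop: S}.(3).

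With such a $K$ in hand, the adjunction inequality for manifolds of simple type (every closed symplectic $4$-manifold with $b_2^+>1$ is of simple type) gives, for any smoothly embedded connected oriented surface $\Sigma\subset\widetilde{N}_n$ of genus $g$ representing $k[\alpha]$,
\begin{equation*}
k^2\alpha^2+\lvert\langle K,\,k\alpha\rangle\rvert\leq 2g-2,
\end{equation*}
provided $g>0$; here $\langle K,k\alpha\rangle=k\langle K,\alpha\rangle$. Because $\langle K,\alpha\rangle\neq 0$, the left-hand side grows linearly in $\lvert k\rvert$, so if $k\neq 0$ the genus $g$ is forced to be at least $n$. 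Equivalently, a representative of genus strictly less than $n$ can only exist when $k=0$, which is exactly the assertion of the lemma. I would arrange the numerical normalization (matching the value $n$) by choosing the Stein diagram so that the bound from adjunction reads off as $g\geq n$ for $\lvert k\rvert\geq 1$.

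The main obstacle I anticipate is twofold: first, verifying that $N_n$ genuinely admits a Stein handle structure (that the relevant Thurston-Bennequin inequalities can be met for all the $2$-handles after a suitable Legendrian isotopy), which is the Eliashberg-type check invoked throughout the paper; and second, pinning down the exact intersection data so that the adjunction bound yields the sharp threshold ``genus less than $n$ forces $k=0$'' rather than some weaker inequality. The genus-$n$ surface produced in Lemma~\ref{lem:genus and embedding of M_n}.(1) strongly suggests the threshold is sharp and tells me what $\alpha^2$ and $\langle K,\alpha\rangle$ must be; the delicate part is confirming these numbers from the closed-up diagram rather than from $N_n$ alone, since passing to $\widetilde{N}_n$ may alter the ambient pairing, and one must be sure the surface representing $k\alpha$ inside $N_n$ (hence inside $\widetilde{N}_n$) is the one constrained by $K$.
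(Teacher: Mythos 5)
Your overall frame --- embed the situation into a closed symplectic $4$-manifold with $b_2^+>1$ and apply the adjunction inequality to a Seiberg--Witten basic class $L$ with $\lvert\langle L,\alpha\rangle\rvert\geq 2n-2$ --- is exactly the paper's, and your endgame numerics ($\alpha^2=0$, so $\lvert k\rvert(2n-2)\leq 2g-2$ forces $g\geq n$ when $k\neq 0$) are correct. The genuine gap is in how you propose to produce such a class $L$. You assume (i) that $N_n$ itself admits a Stein handle presentation, and (ii) that the canonical class of the resulting ambient symplectic manifold pairs with $\alpha$ with absolute value exactly $2n-2$. Neither is verified, and (ii) in particular is not a matter of routine bookkeeping: in any Legendrian presentation of $N_n$ the framings are pinned by its smooth structure (each framing must equal $tb-1$), so the Thurston--Bennequin numbers are forced and the rotation numbers --- which are what compute $\langle c_1,\cdot\rangle$ on the handle classes --- cannot be adjusted by stabilization (that would change $tb$). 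Slice--Bennequin/adjunction constraints then actively work against you: for instance, the $0$-framed $2$-handle curve of the twisted $W_1$ inside $N_n$ bounds a genus-one surface in the $1$-handlebody, so any Legendrian representative with $tb=1$ has rotation number $0$, and its contribution to $\langle c_1,\alpha\rangle$ (which you would need to be of order $n$, since $\alpha$ is the lower handle slid $n$ times over this one) collapses. Merely knowing $\langle K,\alpha\rangle\neq 0$, which is all your canonical-class remark provides, gives a genus bound independent of $n$, not the sharp threshold.

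The paper's proof avoids this entirely by a blow-down/blow-up trick (the same one used for $D_p$ versus $\widetilde{D}_p$ in Section 6). It never makes $N_n$ Stein: instead it takes an auxiliary manifold $\widetilde{N}_n$ (Figure~\ref{fig17}), in effect a blow-down of $N_n$ with extra handles, which visibly is Stein; embeds it by Theorem~\ref{th:closing of Stein} into a minimal symplectic $S_n$; and then blows up $n-1$ times so that $S_n\#(n-1)\overline{\mathbf{C}\mathbf{P}^2}$ contains $N_n$ with intersection data $\alpha\cdot e_1=n$ and $\alpha\cdot e_i=1$ for $2\leq i\leq n-1$ (Figure~\ref{fig18}). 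The blow-up formula (Theorem~\ref{th:SW of E(n)}) then manufactures a basic class $L=\pm K+e_1+\cdots+e_{n-1}$ of the \emph{non-minimal} ambient manifold with $\lvert\langle L,\alpha\rangle\rvert\geq n+(n-2)=2n-2$: the large pairing comes from the exceptional classes, not from rotation numbers of any Stein structure on $N_n$. So the missing idea in your proposal is precisely this mechanism for generating the large pairing; without it, the step you yourself flag as ``the delicate part'' is not merely delicate but has no evident resolution. (A small additional point: your appeal to Lemma~\ref{lem:genus and embedding of M_n}.(2), the embedding of $N_n$ into $D^4$, is irrelevant here --- that embedding is used for Corollary~\ref{cor:finite exotic in S^4}, and an embedding into $D^4$ carries no Seiberg--Witten information.)
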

\begin{proof}
Let $\widetilde{N}_n$ be the compact smooth $4$-manifold given in the left side of Figure~\ref{fig17}. The right side is a Legendrian diagram of $\widetilde{N}_n$ with contact $-1$-framings. Thus $\widetilde{N}_n$ is a compact Stein $4$-manifold. Therefore there exists a simply connected minimal closed symplectic $4$-manifold $S_n$ which contains $\widetilde{N}_n$. 

Let $e_1,e_2,\dots,e_{n-1}$ be the standard orthogonal basis of $H_2((n-1)\overline{\mathbf{C}\mathbf{P}^2};\mathbf{Z})$ such that $e^2_i=-1$ $(1\leq i\leq n-1)$. Blow up $S_n$ as in Figure~\ref{fig18}, where $e_1,e_2,\dots,e_{n-1}$ denote the second homology classes given by corresponding $2$-handles. Note that this picture contains $N_n$. The lower $0$-framed unknot in this picture gives the generator $\alpha$ of $H_2(N_n;\mathbf{Z})$ after sliding over the upper $0$-framed unknot $n$ times. We thus get $\alpha\cdot e_1=n$ and $\alpha\cdot e_i=1$ $(2\leq i\leq n-1)$, where we view $\alpha$ as the element of $H_2(S_n\#(n-1)\overline{\mathbf{C}\mathbf{P}^2})$ through the natural inclusion.

Let $K$ be a Seiberg-Witten basic class of $S_n$. Then the blow up formula shows that $\pm K+ e_1+ e_2+ \dots+e_{n-1}$ is a Seiberg-Witten basic class of $S_n\#(n-1)\overline{\mathbf{C}\mathbf{P}^2}$. Therefore, there exists a Seiberg-Witten basic class $L$ of $S_n\#(n-1)\overline{\mathbf{C}\mathbf{P}^2}$ which satisfies the inequality below:
\begin{equation*}
\lvert \langle L, k\alpha \rangle\rvert \geq \lvert k (n+(n-2))\rvert =\lvert k \rvert(2n-2). 
\end{equation*}
Since $(k\alpha)^2=0$ and $n\geq 2$, we can now easily check the required claim by applying the adjunction inequality to $S_n\#(n-1)\overline{\mathbf{C}\mathbf{P}^2}$. 
\end{proof}
\begin{figure}[ht!]
\begin{center}
\includegraphics[width=3.5in]{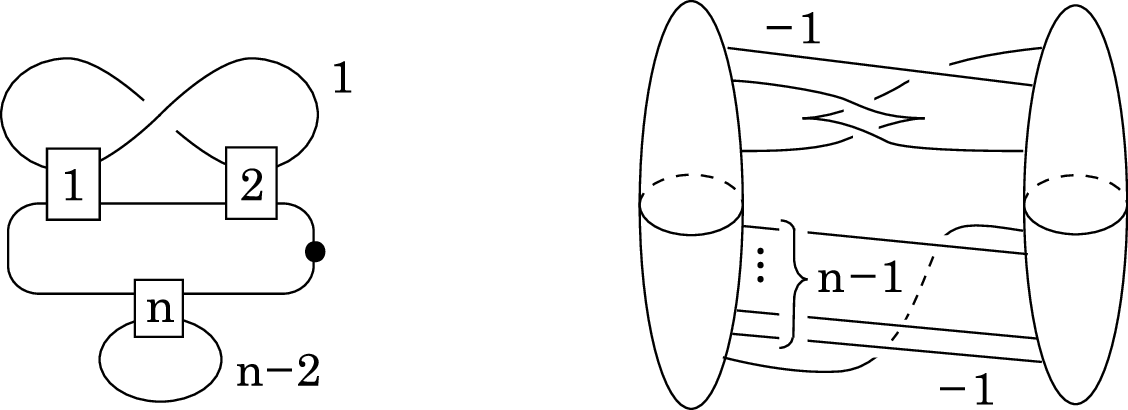}
\caption{$\widetilde{N}_n$ and its legendrian diagram with contact framings}
\label{fig17}
\end{center}
\end{figure}
\begin{figure}[ht!]
\begin{center}
\includegraphics[width=1.15in]{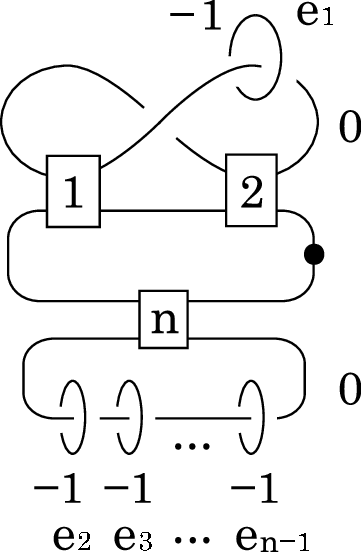}
\caption{The subhandlebody of $S_n\#(n-1)\overline{\mathbf{C}\mathbf{P}^2}$}
\label{fig18}
\end{center}
\end{figure}
\newpage
\begin{proposition}\label{prop:basis of X_n}
$(1)$ For each $n\geq 1$, there exists a basis $\{\alpha, \beta_1,\beta_2,\dots\}$ of $H_2(X_n;\mathbf{Z})$ such that $\alpha$ is represented by a smoothly embedded surface with genus $n+1$, and that $\beta_i$ $(i\geq 1)$ is represented by a smoothly embedded sphere. 

$(2)$ For any $n\geq 1$, there exists no basis $\{\alpha, \beta_1,\beta_2,\dots\}$ of $H_2(X_n;\mathbf{Z})$ such that $\alpha$ is represented by a surface with genus less than $n+1$, and that $\beta_i$ $(i\geq 1)$ is represented by a smoothly embedded sphere. 
\end{proposition}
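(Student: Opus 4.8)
The plan is to exploit the boundary--sum decomposition
\[
X_n \;\cong\; N_{n+1}\,\natural\, Y,
\]
where $Y$ is the infinite boundary sum of the manifolds $M_i$ with $i\ge 2$ and $i\ne n+1$; this comes from regrouping the associative, commutative (infinite) boundary sum so as to pull the single twisted summand $N_{n+1}$ to the front. Since consecutive summands meet along contractible $3$--balls, a Mayer--Vietoris argument gives $H_2(X_n;\mathbf{Z})=\bigoplus_{i\ge 2}H_2(C_i;\mathbf{Z})$, where $C_i=M_i$ for $i\ne n+1$ and $C_{n+1}=N_{n+1}$, and each summand is infinite cyclic by Lemma~\ref{lem:genus and embedding of M_n}. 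The heart of both parts is the projection $\pi\colon H_2(X_n;\mathbf{Z})\to H_2(N_{n+1};\mathbf{Z})\cong\mathbf{Z}$ onto the twisted summand, together with the fact that $\pi$ does not increase minimal genus.

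For part $(1)$ I would exhibit the obvious geometric basis. Let $\alpha$ be the generator of the $N_{n+1}$--summand and let $\{\beta_i\}$ be the generators of the remaining summands $M_j$ $(j\ne n+1)$; these visibly form a basis of $H_2(X_n;\mathbf{Z})$. By Lemma~\ref{lem:genus and embedding of M_n}.$(1)$ applied to $N_{n+1}$, the class $\alpha$ is represented by a smoothly embedded surface of genus $n+1$, while each $\beta_i$ is represented by a smoothly embedded $2$--sphere, as one reads off directly from the handlebody picture of $M_j$ in Figure~\ref{fig12} (the generator is carried by the belt sphere of a $0$--framed $2$--handle, i.e.\ becomes a $0$--framed unknot after the obvious handle slides). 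This settles $(1)$.

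For part $(2)$, suppose for contradiction that $\{\alpha,\beta_1,\beta_2,\dots\}$ is such a basis with every $\beta_i$ a sphere and $\alpha$ represented by a connected surface of genus $g<n+1$. The key tool is a cut--and--paste lemma: let $D\cong B^3$ be the ball along which $N_{n+1}$ is glued to $Y$; making any closed surface $F\subset X_n$ transverse to $D$ and surgering along the innermost circles of $F\cap D$ (using disks in $D$) does not increase the total genus and produces $F_N\sqcup F_Y$ with $F_N\subset N_{n+1}$, $F_Y\subset Y$, $[F_N]=\pi([F])$, and each component of $F_N$ carrying a multiple of the generator $\gamma$ of $H_2(N_{n+1};\mathbf{Z})$. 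Applied to a sphere $\beta_i$, every component of $F_N$ is a sphere representing some $k_j\gamma$, so Lemma~\ref{lem:upper bound of genus of N_n} (with $0<n+1$) forces each $k_j=0$ and hence $\pi(\beta_i)=0$. Since $\pi$ is onto and kills every $\beta_i$, the class $\pi(\alpha)$ generates $H_2(N_{n+1};\mathbf{Z})$. Applying the lemma to the genus $g$ representative of $\alpha$, the surgered $F_N$ has total genus $\le g<n+1$, yet its components represent classes $k_j\gamma$ with $\sum_j k_j=\pm1$, so some $k_{j_0}\ne 0$; by Lemma~\ref{lem:upper bound of genus of N_n} that single component has genus $\ge n+1$, contradicting total genus $<n+1$. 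This proves $(2)$.

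The main obstacle is the cut--and--paste lemma, i.e.\ controlling minimal genus under the boundary--sum projection $\pi$. Two points need care: that the infinite boundary sum may legitimately be regrouped so that $N_{n+1}$ is separated from the rest by a single properly embedded $3$--ball $D$ with $\partial D\subset\partial X_n$; and that surgering along the circles of $F\cap D$ both realizes the homology projection and does not raise genus. The decisive point, which sidesteps any difficulty with disconnected representatives, is to apply Lemma~\ref{lem:upper bound of genus of N_n} to each connected component of the surgered surface separately, so that a single low--genus component carrying a nonzero multiple of $\gamma$ already produces the contradiction.
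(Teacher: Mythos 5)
Your proof is correct, but your mechanism for part $(2)$ is genuinely different from the paper's. Both arguments bottom out in Lemma~\ref{lem:upper bound of genus of N_n} together with the algebraic observation that if every basis element had vanishing $u_{n+1}$-coefficient, the set could not span $H_2(X_n;\mathbf{Z})$; the difference lies in how a low-genus surface in $X_n$ is converted into a low-genus surface in $N_{n+1}$. The paper does this by an ambient embedding: by Lemma~\ref{lem:genus and embedding of M_n}.(2) each $M_i$ $(i\neq n+1)$ embeds in $D^4$, hence $X_n$ itself embeds in $N_{n+1}$, and this embedding kills the classes $u_i$ $(i\neq n+1)$ while preserving $u_{n+1}$; a genus-$g$ surface representing $v=\sum a_iu_i$ is therefore carried, with the same genus and still connected, to a surface representing $a_{n+1}u_{n+1}$, so Lemma~\ref{lem:upper bound of genus of N_n} applies immediately --- no transversality, no surgery, no disconnected representatives. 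You instead realize the homological projection by cutting: make the surface transverse to the separating $3$-ball of the decomposition $X_n\cong N_{n+1}\natural Y$ and surger along innermost circles. This forces you to (i) justify regrouping the infinite boundary sum (alternatively, one can work directly with the two balls separating $N_{n+1}$ from its neighboring summands, which avoids the regrouping issue entirely), (ii) verify that total genus does not increase under surgery, and (iii) apply the lemma to each component of the surgered surface separately --- all of which you identify and handle correctly, the componentwise application being exactly the right way to deal with disconnectedness. What each approach buys: the paper's route is shorter and recycles an embedding lemma it needs anyway (for Corollary~\ref{cor:finite exotic in S^4} and the remark that the $X_n$ embed in $S^4$); yours is independent of that lemma, so your part $(2)$ works verbatim for any boundary sum $N_{n+1}\natural Y$ whether or not the complementary summands embed in $D^4$, at the cost of the standard but fiddly cut-and-paste bookkeeping.
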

\begin{proof}$(1)$ is obvious from~Lemma~\ref{lem:genus and embedding of M_n}. \smallskip 

$(2)$. Let $u_i$ $(i\geq  2,\; i\neq n+1)$ and $u_{n+1}$ be the basis of $H_2(M_i;\mathbf{Z})$ and $H_2(N_{n+1};\mathbf{Z})$, respectively. Suppose that an element $v=\sum_{i=2}^{\infty} a_iu_i$ of $H_2(X_n;\mathbf{Z})$ is represented by a surface with genus less than $n+1$. Here $a_i=0$ except finite number of $i$. Lemma~\ref{lem:genus and embedding of M_n} shows that $X_n$ is embedded into $N_{n+1}$ and that this embedding sends $v$ to $a_{n+1}u_{n+1}$. Since $a_{n+1}u_{n+1}\in H_2(N_{n+1};\mathbf{Z})$ is represented by a surface with genus less than $n+1$, Lemma~\ref{lem:upper bound of genus of N_n} gives $a_{n+1}=0$. This fact implies the required claim. 
\end{proof}
\begin{proof}[Proof of Theorem~\ref{th:infinite corks}]We first discuss for the $(W_1, f_1)$ cork. 
The definition of $X_n$ obviously shows $(1), (2)$.  The claim $(3)$ follows from Proposition~\ref{prop:basis of X_n}. \smallskip

Now consider a general cork $(C,\tau)$ of Mazur type. We modify the above construction as follows. Let $N_n$ be the 4-manifold obtained from $C$ by attaching a 2-handle similarly to Figure~\ref{fig12}. Then define $M_n$ as the cork twist of $N_n$ along $(C,\tau)$. Using these $M_n$ and $N_n$, we can define $X_n$ similarly to the original definition. The claims corresponding to Lemmas~\ref{lem:genus and embedding of M_n} and \ref{lem:upper bound of genus of N_n} clearly hold after suitable modifications of values of various genera (Though we calculated the exact values of genera in those claims, we do not need the exact values if we care about detecting smooth structures of infinitely many of $X_n$'s. ). Namely we obtain the following (For the proof of Lemma~\ref{final:lem:embedding}.(2), see also Lemma~\ref{cork:lem:S4}.).
\begin{lemma}\label{final:lem:embedding}
$(1)$ There exist an integer sequence $g_n$ $(n\geq 1)$ satisfying the following condition. The generator $\alpha$ of $H_2(N_n;\mathbf{Z})\cong \mathbf{Z}$ is represented by a smoothly embedded genus $g_n$ surface. We may assume that $g_{n+1}>g_n$ for each $n\geq 1$, if necessary by connect summing with null homologous surfaces. 

$(2)$ For $n\geq 1$, the $4$-manifolds $M_n$ and $N_n$ can be embedded into the $4$-ball $D^4$. 
\end{lemma}
\begin{lemma}
Let $\alpha$ be the generator of $H_2(N_n;\mathbf{Z})\cong \mathbf{Z}$. Then there exists a strictly increasing integer sequence $h_n$ $(n\geq 1)$ satisfying the following condition. If $k\alpha$ is represented by a smoothly embedded surface with genus less than $h_n$, then $k=0$. 
\end{lemma}
\begin{proposition}\label{prop:final}
$(1)$ For each $n\geq 1$, there exists a basis $\{\alpha, \beta_1,\beta_2,\dots\}$ of $H_2(X_n;\mathbf{Z})$ such that $\alpha$ is represented by a smoothly embedded surface with genus $g_{n+1}$, and that $\beta_i$ $(i\geq 1)$ is represented by a smoothly embedded sphere. 

$(2)$  For any $n\geq 1$, there exists no basis $\{\alpha, \beta_1,\beta_2,\dots\}$ of $H_2(X_n;\mathbf{Z})$ such that $\alpha$ is represented by a surface with genus less than $h_{n+1}$, and that $\beta_i$ $(i\geq 1)$ is represented by a smoothly embedded sphere. 
\end{proposition}
Since $g_n$ and $h_n$ $(n\geq 1)$ are both strictly increasing integer sequence, Proposition~\ref{prop:final} implies the existence of a strictly increasing integer sequence $n_i$ $(i\geq 1)$ such that $X_{n_i}$ $(i\geq 1)$ are mutually non-diffeomorphic. The rest of the required claims easily follows. 
\end{proof}
\begin{remark}Lemma~\ref{lem:genus and embedding of M_n}.(2) shows that $X_n$ $(n\geq 0)$ can be embedded into $S^4$. 
\end{remark}
\begin{definition}Let $X^{(n)}_0$ $(n\geq 1)$ be the boundary sum of $M_2,M_3,\dots,M_{n+1}$. Define $X^{(n)}_i$ $(1\leq i\leq n)$ as the cork twist of $X^{(n)}_0$ along $(W_1,f_1)$ where this $W_1$ is the one contained in the $M_{i+1}$ component.
\end{definition}
\begin{proof}[Proof of Corollary~\ref{cor:finite exotic in S^4}]This is almost the same as the proof of Theorem~\ref{th:infinite corks}. 
\end{proof}
\section{Further remarks}\label{section:remark}
In this section, we conclude this paper by making some remarks. 

In~\cite{AY1}, we introduced new objects which we call plugs. By using the embedding theorem of Stein $4$-manifolds, we can easily construct examples of plug structures corresponding to Theorem~\ref{th:knotting corks}, \ref{th:disjoint corks} and Corollary~\ref{cor:involutions of corks}. The proofs are almost the same as  that of cork structures. See also~\cite{AY3}. 

In Section~\ref{sec:Infinitely many disjointly knotted cork}, we used the embedding theorem to detect smooth structures. This technique is useful for constructions of exotic smooth structures on compact 4-manifolds with boundary. For details, see our paper \cite{AY5} which was written after the first draft of this paper. 

\end{document}